\renewcommand*{\backref}[1]{}
\renewcommand*{\backrefalt}[4]{({%
    \ifcase #1 Not cited.%
    \or Cited on p.~#2%
    \else Cited on pp.~#2%
    \fi%
  })}
\let\ea\expandafter
\def\foreachletter#1#2#3{\foreachcount=#1
  \ea\loop\ea\ea\ea#3\@alph\foreachcount
  \advance\foreachcount by 1
  \ifnum\foreachcount<#2\repeat}
\def\foreachLetter#1#2#3{\foreachcount=#1
  \ea\loop\ea\ea\ea#3\@Alph\foreachcount
  \advance\foreachcount by 1
  \ifnum\foreachcount<#2\repeat}
\def\definescr#1{\ea\gdef\csname s#1\endcsname{\ensuremath{\mathscr{#1}}\xspace}}
\def\definecal#1{\ea\gdef\csname c#1\endcsname{\ensuremath{\mathcal{#1}}\xspace}}
\def\definebb#1{\ea\gdef\csname d#1\endcsname{\ensuremath{\mathbb{#1}}\xspace}}
\def\definesf#1{\ea\gdef\csname i#1\endcsname{\ensuremath{\mathsf{#1}}\xspace}}
\def\ftil{\widetilde{f}}
\def\gtil{\widetilde{g}}
\def\Rtil{\widetilde{R}}
\def\Stil{\widetilde{S}}
\def\autofmt@d#1#2\autofmt@end{\mathbb{#1}\mathsf{#2}}
\def\autofmt@c#1#2\autofmt@end{\mathcal{#1}\mathit{#2}}
\def\autofmt@f#1\autofmt@end{\mathsf{#1}}
\def\auto@drop#1{}
\def\autodef#1{\ea\ea\ea\@autodef\ea\ea\ea#1\ea\auto@drop\string#1\autodef@end}
\def\@autodef#1#2#3\autodef@end{%
  \ea\def\ea#1\ea{\ea\ensuremath\ea{\csname autofmt@#2\endcsname#3\autofmt@end}\xspace}}
\def\autodefs@end{blarg!}
\def\autodefs#1{\@autodefs#1\autodefs@end}
\def\@autodefs#1{\ifx#1\autodefs@end%
  \def\autodefs@next{}%
  \else%
  \def\autodefs@next{\autodef#1\@autodefs}%
  \fi\autodefs@next}
\DeclareSymbolFont{bbold}{U}{bbold}{m}{n}
\DeclareSymbolFontAlphabet{\mathbbb}{bbold}
\newcommand{\unit}{\ensuremath{\mathbbb{1}}\xspace}
\newcommand{\op}{^{\mathrm{op}}}
\let\adj\dashv
\def\ten{\mathrel{\otimes}}
\DeclareMathOperator\colim{colim}
\DeclareMathOperator\llim{lim}
\let\lim\llim
\newcommand{\too}[1][]{\ensuremath{\overset{#1}{\longrightarrow}}}
\let\toot\rightleftarrows
\let\toto\rightrightarrows
\let\into\hookrightarrow
\let\xto\xrightarrow
\def\toiso{\xto{\smash{\raisebox{-.5mm}{$\scriptstyle\sim$}}}}
\newcommand{\drpullback}[1][dr]{\ar[#1,phantom,near start,"\lrcorner"]}
\newcommand{\ulpullback}[1][ul]{\ar[#1,phantom,near start,"\ulcorner"]}
\let\ep\varepsilon
\let\al\alpha
\let\Gm\Gamma
\let\De\Delta
\let\si\sigma
\let\om\omega
\let\ka\kappa
\let\Th\Theta
\DeclareSymbolFont{symbolsC}{U}{txsyc}{m}{n}
\DeclareMathSymbol{\multimapinv}{\mathrel}{symbolsC}{18}
\DeclareMathSymbol{\multimapboth}{\mathrel}{symbolsC}{19}
\DeclareMathSymbol{\multimapdot}{\mathrel}{symbolsC}{20}
\DeclareMathSymbol{\multimapdotinv}{\mathrel}{symbolsC}{21}
\DeclareMathSymbol{\multimapdotboth}{\mathrel}{symbolsC}{22}
\DeclareMathSymbol{\multimapdotbothA}{\mathrel}{symbolsC}{23}
\DeclareMathSymbol{\multimapdotbothB}{\mathrel}{symbolsC}{24}
\let\hom\multimap
\let\cohom\lhd
\def\mixedhom{\mathbin{\mathrlap{\to}\multimap}}
\def\mixedeechom{\mathbin{\mathrlap{\to}\multimapdot}}
\let\eechom\multimapdot
\let\coten\invamp
\let\counit\bot
\def\d#1{#1^*}
\def\duals{\d{(\cdot)}}
\def\Umulti{\mathsc{symmulti}^*}
\def\Pl(#1|#2;#3){\cP\big(#1\mid #2\mathbin{;} #3\big)}
\def\El(#1|#2;#3){\cE\big(#1\mid #2\mathbin{;} #3\big)}
\def\Ul(#1|#2;#3){\cU\big(#1\mid #2\mathbin{;} #3\big)}
\def\linhom#1(#2|#3;#4){#1\big(#2\mid #3\mathbin{;} #4\big)}
\def\Pnl(#1;#2){\cP\big(#1 \mathbin{;} #2\big)}
\def\Enl(#1;#2){\cE\big(#1 \mathbin{;} #2\big)}
\def\nonlinhom#1(#2;#3){#1\big(#2 \mathbin{;} #3\big)}
\def\Pnonlin{\cP^{\mathrm{NL}}}
\def\nonlin#1{#1^{\mathrm{NL}}}
\def\Plin{\cP^{\mathrm{L}}}
\def\lin#1{#1^{\mathrm{L}}}
\def\act#1#2#3{{}^{#1}(#2)^{#3}}
\def\foc{\iF}
\def\uoc{\iU}
\def\fwn{\raisebox{2.4mm}{\rotatebox{180}{\iF}}}
\def\uwn{\raisebox{2.4mm}{\rotatebox{180}{\iU}}}
\def\bkll{\cL_{\oc,\wn}}
\def\lop{^{\mathrm{L\cdot op}}}
\def\coun#1{\mathsc{lnlmulti}^*(#1)}
\def\shift[#1|#2;#3]{[#1\,|\,#2\,;\,#3]}
\def\lnl{\text{\textsc{lnl}}\xspace}
\def\mathsc#1{\text{\textsc{#1}}}
\def\lnlpoly{\text{\textsc{lnl}}\fPoly}
\def\D{^{\bullet}}
\def\dsketch{\dD\text{-}\fSketch}
\def\sketch#1{#1\text{-}\fSketch}
\def\dcat{\dD\text{-}\fCat}
\def\dscat{\dD\text{-}\fsCat}
\def\cat#1{#1\text{-}\fCat}
\def\scat#1{#1\text{-}\fsCat}
\def\sdhat{\widehat{\cS}_{\dD}}
\def\abs#1{{|#1|}}
\def\ladj#1{#1_*}
\def\radj#1{#1^*}
\def\F{\ensuremath{\mathfrak{F}}\xspace}
\def\Fhat{\ensuremath{\widehat{\mathfrak{F}}}\xspace}
\def\Ftil{\ensuremath{\widetilde{\mathfrak{F}}}\xspace}
\def\istype#1{\;\mathsf{type}^{#1}}
\def\ttype{\;\mathsf{type}^{\tau}}
\def\ltype{\;\mathsf{type}^{\mathrm{L}}}
\def\nltype{\;\mathsf{type}^{\mathrm{NL}}}
\def\boc{\mathord{\textstyle\bigodot}_{\cC}}
\newcommand\expand[2]{{#1}_{/#2}}
\newcommand\expandp[2]{(#1)_{/#2}}
\newcommand\preexpand[2]{\partial({#1}_{/#2})}
\newcommand\preexpandp[2]{\partial((#1)_{/#2})}
\newcommand\cartcone[2]{\cC\mathit{art}_{#1/#2}}
\newcommand\limconel[1]{\cL\mathit{im}^{\mathrm{L}}_{#1}}
\newcommand\colimconel[1]{\cC\mathit{olim}^{\mathrm{L}}_{#1}}
\newcommand\limconenl[1]{\cL\mathit{im}^{\mathrm{NL}}_{#1}}
\newcommand\colimconenl[1]{\cC\mathit{olim}^{\mathrm{NL}}_{#1}}
\newcommand\reduct[1]{\partial #1}
\newcommand\newterm[1]{#1^{\triangleright}}
\newcommand\newinit[1]{#1^{\triangleleft}}
\let\mylabel\label
\renewcommand{\label}[1]{%
  \mylabel[\@currenvir]{#1}%
}
\crefname{thm}{Theorem}{Theorems}
\crefname{cor}{Corollary}{Corollaries}
\crefname{lem}{Lemma}{Lemmas}
\crefname{prop}{Proposition}{Propositions}
\crefname{rem}{Remark}{Remarks}
\crefname{exa}{Example}{Examples}
\crefname{defi}{Definition}{Definitions}
\crefname{figure}{Figure}{Figures}
\crefname{table}{Table}{Tables}
\keywords{linear logic, exponential modality, polycategory, multicategory, doctrine, sequent calculus}
\title{LNL polycategories and doctrines of linear logic}
\thanks{This material is based on research sponsored by The United States Air Force Research Laboratory under agreement numbers FA9550-15-1-0053 and FA9550-21-1-0009.}
\author[M.~Shulman]{Michael Shulman}
\address{University of San Diego}
\email{shulman@sandiego.edu}
\begin{document}

\begin{abstract}
  We define and study LNL polycategories, which abstract the judgmental structure of classical linear logic with exponentials.
  Many existing structures can be represented as LNL polycategories, including LNL adjunctions, linear exponential comonads, LNL multicategories, IL-indexed categories, linearly distributive categories with storage, commutative and strong monads, CBPV-structures, models of polarized calculi, Freyd-categories, and skew multicategories, as well as ordinary cartesian, symmetric, and planar multicategories and monoidal categories, symmetric polycategories, and linearly distributive and *-autonomous categories.
  To study such classes of structures uniformly, we define a notion of LNL doctrine, such that each of these classes of structures can be identified with the algebras for some such doctrine.
  We show that free algebras for LNL doctrines can be presented by a sequent calculus, and that every morphism of doctrines induces an adjunction between their 2-categories of algebras.
\end{abstract}

\maketitle

\setcounter{tocdepth}{1}
\tableofcontents

\section{Introduction}
\label{sec:introduction}

When presenting logics and type theories, it is generally useful to separate the \emph{structural} rules, such as exchange, weakening, contraction, identity, and cut, from the \emph{logical} rules governing particular connectives.
This separation of concerns can be reflected in categorical semantics by starting with a kind of \emph{multicategory}~\cite{lambek:dedsys-ii,hermida:multicats,leinster:higher-opds} or \emph{polycategory}~\cite{szabo:polycats} encapsulating the structural rules, in which we can formulate universal properties of objects that correspond to the connectives.

A multicategory is like a category, but allows the domain of a morphism to be a finite list of objects; a polycategory allows both the domain and codomain to be such a list.
Such morphisms correspond respectively to intuitionistic sequents $A_1,\dots,A_m \vdash B$ and classical sequents $A_1,\dots,A_m \vdash B_1,\dots, B_n$.
One can then formulate universal properties for ``tensor products'' as representing objects for such morphisms, generalizing the classical characterization of the tensor product of vector spaces as a representing object for multilinear maps.

The choice of structural rules in a logic is reflected by an action on the morphisms of a multi- or polycategory that modifies the elements in the domain or codomain lists.
For instance, the exchange rule is reflected by an operation taking any morphism $(\Gamma,A,B,\Delta) \to C$ to a morphism $(\Gamma,B,A,\Delta) \to C$.
This leads to different kinds of multi- and polycategory, such as the following.
\begin{itemize}
\item Cartesian multicategories (a.k.a.\ abstract clones) correspond to intuitionistic nonlinear logic, with all structural rules.
  A cartesian multicategory with enough representing objects is equivalent to a cartesian monoidal category or a cartesian closed category.
\item Symmetric multicategories correspond to intuitionistic multiplicative-additive linear logic, with exchange but no weakening or contraction.
  A symmetric multicategory with enough representing objects is equivalent to a symmetric monoidal category, possibly closed.
\item Symmetric polycategories correspond to classical multiplicative-additive linear logic.
  A symmetric polycategory with enough representing objects is equivalent to a linearly distributive category or a $\ast$-autonomous category.
\end{itemize}

Multicategories and polycategories also have advantages from a purely category-the\-o\-ret\-ic standpoint.
They
can simplify coherence problems, since operations defined by universal properties generally do not require explicit coherence axioms.
They can also enable the unification of different-looking structures in a larger context; for instance, monoidal categories and closed categories can both be represented as multicategories~\cite{hermida:multicats,manzyuk:closed}, and the Chu and Dialectica constructions are both instances of one polycategorical operation~\cite{shulman:dialectica}.

It seems, however, that no polycategorical structure exists in the literature to correspond to \emph{classical} linear logic \emph{with exponentials}.
Structured categories with exponential modalities have certainly been studied, such as \lnl adjunctions~\cite{benton:lnl} and linearly distributive categories with storage~\cite{bcs:storage}.
And a multicategorical version, corresponding to \emph{intuitionistic} linear logic with exponentials, is suggested in~\cite{ht:lnl-2mnd}.
But the polycategorical case appears to be missing.

In this paper we fill this gap by defining \emph{LNL polycategories}.
An \lnl polycategory has two classes of objects, called \emph{linear} and \emph{nonlinear}.
The linear objects form a symmetric polycategory, while the nonlinear objects form a cartesian multicategory, and there are additional morphisms relating the two classes of objects, enabling a description of the modalities $\oc$ and $\wn$ by universal properties.
This can be regarded as a semantic counterpart of split-context presentations of linear logic, such as~\cite{benton:lnl,barber:dill,wadler:syn-ll} in the intuitionistic case and~\cite{girard:unity} in the classical one.

Like their syntactic counterpart of full classical linear logic, \lnl polycategories are an extremely rich structure.
In addition to \lnl adjunctions and linearly distributive categories with storage, they include cartesian multicategories (if all objects are nonlinear), symmetric polycategories (if all objects are linear), symmetric multicategories (if all objects are linear and all codomains are unary), and CBPV structures (if all linear codomains are unary and all linear domains are subunary).
Thus, any structured category that can be represented by any of these multi- or polycategorical notions can also be regarded as an \lnl polycategory.

This suggests that \lnl polycategories should provide a unifying context to compare different kinds of structured category, and to study the correspondence between logic and category.
To facilitate this, we define a notion of \emph{LNL doctrine} \dD, whose ``algebras'' (which we call \emph{\dD-categories}) are \lnl polycategories satisfying certain object and arity restrictions and in which objects having certain universal properties exist.
Inspired by~\cite{hermida:fib-multi,lsr:multi,bz:bifib-poly}, we express these universal properties \emph{fibrationally}: an \lnl doctrine \dD is an \lnl polycategory $\abs{\dD}$ equipped with a collection of distinguished ``cones'', and a \dD-category is an \lnl polycategory \cP equipped with a functor $\cP \to \abs{\dD}$ admitting a ``cartesian'' lift for each distinguished cone.
We also incorporate a ``well-sortedness'' condition that allows a restriction to Kleisli adjunctions if desired.
In this way, we can represent all of the following kinds of structured category, and many more, as the algebras for \lnl doctrines:
\begin{itemize}
\item Cartesian multicategories, symmetric multicategories, symmetric polycategories, \lnl multicategories, and skew multicategories.
\item Symmetric monoidal categories, closed symmetric monoidal categories, and symmetric closed categories.
\item Cartesian monoidal categories and cartesian closed categories.
\item Cartesian monoidal categories with a commutative strong monad.
\item Symmetric monoidal categories with a strong monad.
\item CBPV adjunction models, EEC+ models, and ECBV models.
\item Freyd-categories and Freyd-multicategories.
\item Linearly distributive categories and $\ast$-autonomous categories.
\item \lnl adjunctions, possibly closed or $\ast$-autonomous.
\item Symmetric monoidal categories with a linear exponential comonad, linearly distributive categories with storage, and $\ast$-autonomous categories with storage.
\item Any of the above with any specified family of limits and/or colimits.
\end{itemize}

We also argue that \lnl doctrines provide a unifying context to study substructural logics, and to compare the corresponding kinds of monoidal category.
Specifically, we will use a well-known iterative category-theoretic construction, known as the \emph{small object argument}, to present the \emph{free} \dD-category $\sdhat$ generated by an input datum \cS that we call a \dD-\emph{sketch}.
This has the following two consequences.

Firstly, from this construction we can extract a syntactic sequent calculus that also presents free \dD-categories.
The iterative small object argument corresponds naturally to the inductive definition of sequent calculus derivations.
The structural rules arise since each stage is an \lnl polycategory, while the logical rules are inserted by iterative pushouts that enforce the existence of objects with universal properties.
Thus, there is a precise correspondence between the syntactic and semantic versions of the separation of concerns between structural and logical rules.

Secondly, we use the free \dD-category on a sketch to show that any morphism of doctrines $\F:\dD_1\to\dD_2$ induces a pseudo 2-adjunction between $\dD_1$-categories and $\dD_2$-categories.
That is, any $\dD_2$-category $\cT$ has an underlying $\dD_1$-category $\radj{\Fhat}\cT$, and any $\dD_1$-category \cS generates a free $\dD_2$-category $\ladj{\Fhat}\cS$.
Thus, \lnl doctrines also supply a uniform way to relate different sorts of monoidal category, potentially with exponential monads and comonads.

\section{\lnl polycategories}
\label{sec:lnl-polycategories}

The different kinds of multicategories mentioned in \cref{sec:introduction}, corresponding to logics with different structural rules, are all instances of a well-developed theory of ``generalized multicategories'' parametrized by a monad on a bicategory or double category of spans or profunctors.\footnote{See~\cite{cs:multicats} for a general framework, building on much prior work cited therein.}
This theory was used for instance in~\cite{ht:lnl-2mnd} to begin defining an analogue of \lnl polycategories for intuitionistic linear logic (see our discussion of ``\lnl multicategories'' below).
\lnl polycategories ought to be an instance of a similar theory of ``generalized polycategories'',
but unfortunately, no such general theory has been formulated yet (though~\cite{garner:polycats} provides strong evidence for its existence).
Thus, in this paper we simply give the definitions explicitly.

\begin{defi}
  A \textbf{linear-nonlinear (\lnl) polycategory} \cP consists of:
  \begin{enumerate}
  \item A set of \textbf{nonlinear objects}, which we denote by letters near the end of the Roman alphabet such as $X,Y,Z$.
    We denote finite lists of nonlinear objects by the Greek letters $\Theta,\Upsilon$.
    If $(X_1,\dots, X_m)$ is such a list and $\si : \{1,\dots,n\} \to \{1,\dots,m\}$ is a function, we write $\si : (X_1,\dots, X_m)\to (X_{\si 1},\dots, X_{\si n})$ and call it a \textbf{structural map}.\label{item:lnl-nlobj}
  \item For each $\Theta,X$, a \textbf{nonlinear hom-set} $\Pnl(\Theta;X)$ containing \textbf{nonlinear morphisms}, 
    with a functorial action by any structural map $\si : \Theta \to \Upsilon$:\label{item:lnl-mixedhom}
    \[\act{}{-}{\si} : \Pnl(\Upsilon;X) \to \Pnl(\Theta;X). \]
  \item Compositions and identities for the nonlinear hom-sets
    \[ \circ_X : \Pnl(\Theta_1,X,\Theta_2;Y) \times \Pnl(\Upsilon;X) \to \Pnl(\Theta_1,\Upsilon,\Theta_2;Y)
      \qquad 1_X \in \Pnl(X;X)\]
    satisfying the multicategory axioms and equivariant for the structural actions.\label{item:lnl-cart}
  \item A set of \textbf{linear objects}, which we denote by letters near the beginning of the Roman alphabet such as $A,B,C$.
    We denote finite lists of linear objects by the Greek letters $\Gamma,\Delta$.
    If $(A_1,\dots, A_n)$ is such a list and $\tau : \{1,\dots,n\} \toiso \{1,\dots,n\}$ is a permutation, we write $\tau : (A_1,\dots, A_n)\toiso (A_{\si 1},\dots, A_{\si n})$ and call it a \textbf{structural permutation}.
  \item For each $\Theta$ and $\Gamma,\Delta$, a \textbf{linear hom-set} $\Pl(\Theta|\Gamma;\Delta)$ containing \textbf{linear morphisms},
    with a functorial action by a structural map $\si : \Theta' \to \Theta$ and structural permutations $\tau : \Gamma'\to \Gamma$ and $\rho : \Delta \to \Delta'$:
    \[ \act{\rho}{-}{\si|\tau} : \Pl(\Theta|\Gamma;\Delta) \to \Pl(\Theta'|\Gamma';\Delta'). \]
  \item For each $A$ an identity morphism $1_A \in \Pl(|A;A)$.
  \item Composition morphisms
    \begin{alignat*}{2}
      \circ_A &:  \mathrlap{\Pl(\Theta|\Gamma_1,A,\Gamma_2;\Delta) \times \Pl(\Theta'|\Gamma';\Delta'_1,A,\Delta'_2)}\\
      &&&\too \Pl(\Theta,\Theta'|\Gamma_1,\Gamma',\Gamma_2;\Delta'_1,\Delta,\Delta'_2)\\
      \circ_X &: \Pl(\Theta_1,X,\Theta_2|\Gamma;\Delta) \times \Pnl(\Upsilon;X) &&\too \Pl(\Theta_1,\Upsilon,\Theta_2|\Gamma;\Delta)
    \end{alignat*}
    that are associative, unital, and equivariant in all reasonable ways.
    (Note that by equivariance, all the compositions are uniquely determined by those in which $\Theta_2,\Gamma_2,\Delta_2'$ are empty.)
  \end{enumerate}
\end{defi}

\begin{defi}\label{defn:lnlpoly-2cat}
  A \textbf{functor} $H:\cP\to\cQ$ between \lnl polycategories consists of functions between their linear and nonlinear objects and morphisms, preserving domains, codomains, structural actions, identities, and composites.
  A \textbf{transformation} $\al : H \Rightarrow K : \cP\to\cQ$ between functors consists of:
  \begin{enumerate}
  \item For each nonlinear object $X$ of \cP, a nonlinear morphism $\al_X \in \nonlinhom{\cQ}(H X; K X)$.
  \item For each linear object $A$ of \cP, a linear morphism $\al_A \in \linhom{\cQ}(|H A ; K A)$.
  \item For each nonlinear $f\in \Pnl(\Theta;Y)$, we have $\al_Y \circ H f = K f \circ (\al_\Theta)$.\footnote{Here if $\Theta = (X_1,\dots,X_n)$ then $K f \circ (\al_\Theta)$ denotes $(\cdots(K f \circ_{X_1} \al_{X_1}) \circ_{X_2} \al_{X_2} \cdots )\circ_{X_n} \al_{X_n} $, and similarly elsewhere.}
  \item For each linear $f\in \Pl(\Theta|\Gamma;\Delta)$, we have $(\al_\Delta) \circ H f = K f \circ (\al_\Theta\mid \al_\Gamma)$.
  \end{enumerate}
  This defines a strict 2-category $\lnlpoly$.
\end{defi}

\lnl polycategories are such a rich structure that they include many better-known structures as special cases.
(The reader unfamiliar with any of the structures mentioned below is free to take the asserted characterization as a definition.)
\begin{itemize}
\item \textbf{Symmetric polycategories} can be identified with \lnl polycategories having no nonlinear objects (and hence no nonlinear morphisms).
  These model the judgmental structure of classical multiplicative-additive linear logic.
\item \textbf{Symmetric multicategories} can be identified with \lnl polycategories having no nonlinear objects and in which all (linear) morphisms are \emph{co-unary}, i.e.\ have a codomain of length 1.
  These model the judgmental structure of intuitionistic multiplicative-additive linear logic.
\item Even more degenerately, ordinary \textbf{categories} can be identified with \lnl polycategories having no nonlinear objects and in which all (linear) morphisms are both unary and co-unary.
\item \textbf{Cartesian multicategories} can be identified with \lnl polycategories having no linear objects and no linear morphisms (here the former does not quite imply the latter, as there are homsets $\Pl(\Theta|;\,)$).
  These model the judgmental structure of intuitionistic (nonlinear) logic.
\item By an \textbf{\lnl multicategory} we will mean an \lnl polycategory in which all linear morphisms are co-unary.
  These model the judgmental structure of intuitionistic linear logic (with exponentials); they do not quite appear in the literature, though a structure like them is the goal of~\cite{ht:lnl-2mnd} (see \cref{egs:kleisli-type}).
\end{itemize}

\begin{rem}\label{rmk:slice}
  In fact, each of the above five subcategories is a slice category $\lnlpoly/\cS$ for some subterminal object $\cS$.
  The terminal object of $\lnlpoly$ has one linear object, one nonlinear object, and all hom-sets singletons; thus a subterminal object has at most one object of each sort and each hom-set a subsingleton.

  The slice category $\lnlpoly/\cS$ over a subterminal is thus the full subcategory of $\lnlpoly$ consisting of those objects \cP whose unique map to the terminal object factors through $\cS$.
  This means that \cP has only objects of the sorts that \cS does, and only morphisms of the arity and co-arity that \cS does.  

  For example, let $\mathsc{sympoly}$ be the subterminal object with one linear object, no nonlinear objects, and all linear homsets singletons.
  Then $\lnlpoly/\mathsc{sympoly}$ consists of \lnl polycategories with no nonlinear objects, i.e.\ symmetric polycategories.
  We can argue similarly for the following suggestively-named subterminals:
  \begin{itemize}
  \item $\mathsc{symmulti}$, which has one linear object, no nonlinear objects, co-unary linear homsets singletons, and others empty.
  \item $\mathsc{cat}$, which has one linear object, no nonlinear objects, and only the identity morphism.
  \item $\mathsc{cartmulti}$, which has one nonlinear object, no linear objects, all nonlinear homsets singletons, and all linear homsets empty.
  \item $\mathsc{lnlmulti}$, which has one linear object, one nonlinear object, all nonlinear homsets and co-unary linear homsets singletons, and others empty.
  \end{itemize}
  For consistency, we may write the terminal object of $\lnlpoly$ as $\mathsc{lnlpoly}$.

  We will consider other slices of $\lnlpoly$ later in the paper.
  For ease of reference, \cref{tab:subterms} on page~\pageref{tab:subterms} summarizes the definitions of all the small \lnl polycategories over which we slice.
\end{rem}

The slice category over any subterminal object \cS is coreflective, with coreflector $(-)\times \cS$.
Thus, all five of these subcategories are coreflective.
In particular, any \lnl polycategory \cP has an underlying symmetric polycategory, which we denote $\Plin$, and an underlying cartesian multicategory, which we denote $\Pnonlin$.

\begin{rem}\label{rmk:planar}
  With a little more work, we can also represent \emph{planar} (i.e.\ non-symmetric) multicategories inside $\lnlpoly$.
  Specifically, any planar multicategory \cM freely generates a symmetric multicategory $\Sigma\cM$, which has the same objects as \cM, and such that a morphism in $\Sigma\cM(\Gamma ; B)$ is a pair $(f,\si)$ where $f\in \cM(\Gamma' ; B)$ and $\si : \Gamma\toiso\Gamma'$ is a structural permutation.
  The functor $\Sigma$ thus defined from planar multicategories to symmetric multicategories (or to \lnl polycategories) is faithful but not full: the morphisms in its image are those that preserve the permutations $\si$.
  But we can enforce this condition by restriction to a suitable slice.

  Let $\mathsc{plmulti}$ be the image under $\Sigma$ of the terminal planar multicategory; thus it has one (linear) object, and its morphisms with arity $n$ and co-arity 1 are labeled by permutations of $n$ objects.
  Then each $\Sigma\cM$ comes with a canonical projection to $\mathsc{plmulti}$ that records the permutations $\si$, and a morphism $\Sigma\cM \to \Sigma\cM'$ is in the image of $\Sigma$ precisely when it commutes with these projections.
  Thus, the category of planar multicategories is equivalent to the slice category of the category of symmetric multicategories, and hence also of \lnlpoly, over $\mathsc{plmulti}$.
  Note that unlike the slices considered in \cref{rmk:slice}, $\mathsc{plmulti}$ is not subterminal, corresponding to the fact that $\Sigma$ is not full.
\end{rem}

\begin{rem}\label{rmk:planar-poly}
  An analogous construction is \emph{not} possible for planar \emph{polycategories}; freely adding symmetric actions to a planar polycategory does not yield a symmetric one, as not all composites are definable~\cite[Example 1.3]{koslowski:polycats}.
  Informally, the gap between planar and symmetric is wider in the classical case than in the intuitionistic one.
  This is one reason that in this paper we focus on the symmetric case.
\end{rem}

\begin{rem}
  As pointed out by a referee, it is natural to also wonder about \emph{cyclic} multicategories~\cite{gk:cyclic-operads,cgr:cyclic,hry:higher-cyc-opd,dh:dk-cyc-opd}.
  These behave very differently, because their cyclic action mixes domains and codomains --- generally with an involution applied to the objects --- thereby enabling them to represent morphisms with codomains of arbitrary arity as well.
  Hence, as shown in~\cite[\S7]{shulman:dialectica}, cyclic \emph{symmetric} multicategories are almost equivalent to symmetric \emph{polycategories} with strict duals (``$\ast$-polycategories''~\cite{hyland:pfthy-abs}).
  The situation with cyclic \emph{planar} multicategories is less clear, but they seem likely to be related to planar polycategories, and hence would suffer from problems akin to those in described in \cref{rmk:planar-poly}.
\end{rem}

\begin{rem}\label{rmk:double-split}
  As noted in \cref{sec:introduction}, \lnl polycategories are a semantic counterpart of ``split-context'' syntaxes such as~\cite{benton:lnl,barber:dill,girard:unity}.
  It may thus be surprising that although we are modeling \emph{classical} linear logic, we have nevertheless only split the \emph{left-hand} context, as is done in \emph{intuitionistic} linear syntaxes such as~\cite{benton:lnl,barber:dill}, rather than splitting both contexts as in~\cite{girard:unity}.
  There are two reasons for this.

  The first is that it is simpler and sufficient.
  As we will see below, even with only one split context we can still characterize \emph{both} modalities $\oc$ and $\wn$ by universal properties.
  This is a polycategorical version of the observation that to model classical linear logic it suffices to have an \lnl adjunction (which models intuitionistic linear logic) whose linear category is $\ast$-autonomous; there is no need to add a second nonlinear category.
  Moreover, most natural examples have this form anyway.

  By the way, note that the apparent asymmetry in splitting the left-hand context, rather than the right-hand one, is really just an artifact of notation.
  We could equally well write $\Pl(\Theta|\Gamma;\Delta)$ as ${\cP\big(\Gamma\mathbin{;} \Delta\mid\Theta\big)}$, reversing the direction of the nonlinear morphisms so they form a ``co-cartesian co-multicategory''.
  But splitting the left-hand context is more intuitive and remains closer to the natural examples.

  The second reason is that ``doubly-split'' \lnl polycategories, at least for one definition of such, are actually a special case of singly-split ones.
  Let $\mathsc{dblsplit}$ be the \lnl polycategory with one linear object, two nonlinear objects, and all homsets singletons.
  Then an object of the slice category $\lnlpoly/\mathsc{dblsplit}$ is an \lnl polycategory equipped with a partition of its nonlinear objects into two subsets, which we may call the ``left-hand objects'' and the ``right-hand objects''.
  Accordingly, if $\Theta$ consists of left-hand objects and $\Upsilon$ of right-hand objects, we can choose to denote the linear homset
  $\Pl(\Theta,\Upsilon | \Gamma;\Delta)$
  by
  ${\cP\big(\Theta\mid \Gamma\mathbin{;} \Delta\mid\Upsilon\big)}$.
  Similarly, if $\Upsilon$ consists of right-hand objects and $Z$ is a right-hand object, we can write the nonlinear homset $\cP\big(\Upsilon \mathbin{;} Z\big)$ as $\cP\big(Z \mathbin{;} \Upsilon\big)$, thereby regarding the right-hand objects as forming a co-cartesian co-multicategory, which acts on the linear homsets ${\cP\big(\Theta\mid \Gamma\mathbin{;} \Delta\mid\Upsilon\big)}$ on the right.

  The only possibly-surprising thing about this notion of ``doubly-split \lnl polycategory'' is that we also have ``mixed nonlinear homsets'' $\Pnl(\Theta,\Upsilon;X)$ (which might perhaps be better written $\cP\big(\Theta \mathbin{;} X \mathbin{;} \Upsilon\big)$) where $\Theta$ consists of left-hand objects, $\Upsilon$ of right-hand objects, and $X$ could be of either sort.
  However, such mixed morphisms arise naturally as the result of weakening a ``pure'' nonlinear morphism of either handedness by objects of the other handedness, and once we have these there is no reason there couldn't be other morphisms of the same sort as well (see, for instance, \cref{thm:ldc-kl}).

  Note also that there is a morphism to $\mathsc{dblsplit}$ from the terminal object $\mathsc{lnlpoly}$ (in fact, two of them), so that our category $\lnlpoly$ is also equivalent to a slice category of this category $\lnlpoly/\mathsc{dblsplit}$ of doubly-split \lnl polycategories.
  Thus, formally we could take either one as the primitive notion and define the other in terms of it.
  We have chosen the singly-split notion as primitive, since it is, as noted above, simpler and sufficient.
\end{rem}

We will see some more examples of \lnl polycategories in \cref{sec:relation-literature}, but first we define the basic universal properties that appear therein.
Inspired by~\cite{bz:bifib-poly}, we say that a morphism $\psi$ in an \lnl polycategory containing an object $R$ (linear or nonlinear) in its domain or codomain is \emph{universal in $R$} if composing along $R$ induces bijections on homsets of all possible types.
For the five possible combination of types for $\psi$ and $R$, this specializes to the following.
\begin{defi}
  Let $X$ be a nonlinear object and $A$ a linear object.
  \begin{itemize}
  \item A nonlinear morphism $\psi\in\Pnl(\Theta;X)$ is \textbf{universal in $X$} if composing with $\psi$ induces bijections
    \begin{align*}
      \Pnl(\Theta',X;Y) &\toiso \Pnl(\Theta',\Theta;Y)\\
      \Pl(\Theta',X|\Gamma;\Delta) &\toiso \Pl(\Theta',\Theta|\Gamma;\Delta).
    \end{align*}
  \item A nonlinear morphism $\psi\in\Pnl(\Theta,X;Y)$ is \textbf{universal in $X$} if composing with $\psi$ induces bijections
    \begin{align*}
      \Pnl(\Theta';X) &\toiso \Pnl(\Theta,\Theta';Y).
    \end{align*}
  \item A linear morphism $\psi \in \Pl(\Theta,X|\Gamma;\Delta)$ is \textbf{universal in $X$} if composing with $\psi$ induces bijections
    \begin{align*}
      \Pnl(\Theta';X) &\toiso \Pl(\Theta,\Theta'|\Gamma;\Delta).
    \end{align*}
  \item A linear morphism $\psi \in \Pl(\Theta|\Gamma;\Delta,A)$ is \textbf{universal in $A$} if composing with $\psi$ induces bijections
    \begin{align*}
      \Pl(\Theta'|\Gamma',A;\Delta') &\toiso \Pl(\Theta',\Theta|\Gamma',\Gamma;\Delta',\Delta).
    \end{align*}
  \item A linear morphism $\psi \in \Pl(\Theta|\Gamma,A;\Delta)$ is \textbf{universal in $A$} if composing with $\psi$ induces bijections
    \begin{align*}
      \Pl(\Theta'|\Gamma';\Delta',A) &\toiso \Pl(\Theta',\Theta|\Gamma',\Gamma;\Delta',\Delta).
    \end{align*}
  \end{itemize}
  A functor is said to \textbf{preserve} a certain kind of universal morphism if it takes any such morphism to a similarly universal morphism.
\end{defi}

Universal morphisms are unique up to unique isomorphism:

\begin{prop}\label{thm:univ-uniq}
  If $\psi \in \Pl(\Theta|\Gamma;\Delta,A)$ and $\psi' \in \Pl(\Theta|\Gamma;\Delta,A')$ are universal in $A$ and $A'$ respectively, then there is a unique isomorphism $\phi : A\cong A'$ such that $\phi \circ_A \psi = \psi'$; and similarly for other kinds of universal morphism.
\end{prop}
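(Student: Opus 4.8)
The plan is to run the standard representability argument—the one showing a representing object is unique up to unique isomorphism—transported to the polycategorical setting, where the role of ``composing with the universal morphism'' is played by the bijections in the preceding definition. First I would construct the comparison map $\phi$ itself. Since $\psi'\in\Pl(\Theta|\Gamma;\Delta,A')$, reordering its codomain by a structural permutation exhibits it as an element of the target homset $\Pl(\Theta',\Theta|\Gamma',\Gamma;\Delta',\Delta)$ of the universal bijection for $\psi$ in the instance $\Theta'=\Gamma'=\emptyset$, $\Delta'=(A')$. Applying the inverse of that bijection produces a \emph{unique} $\phi\in\Pl(|A;A')$ with $\phi\circ_A\psi=\psi'$. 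This already yields both existence and uniqueness of a morphism satisfying the stated equation, so it remains only to verify that $\phi$ is invertible.

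For invertibility I would build the candidate inverse symmetrically: applying the universal bijection for $\psi'$ in $A'$ to $\psi$ gives a unique $\phi'\in\Pl(|A';A)$ with $\phi'\circ_{A'}\psi'=\psi$. Setting $g=\phi'\circ_{A'}\phi\in\Pl(|A;A)$ and using associativity of composition, I compute $g\circ_A\psi=\phi'\circ_{A'}(\phi\circ_A\psi)=\phi'\circ_{A'}\psi'=\psi$. By unitality we also have $1_A\circ_A\psi=\psi$, and since $\psi$ is universal in $A$ the assignment $\eta\mapsto\eta\circ_A\psi$ is injective on $\Pl(|A;A)$ (the instance $\Theta'=\Gamma'=\emptyset$, $\Delta'=(A)$); hence $g=1_A$, i.e.\ $\phi'\circ_{A'}\phi=1_A$. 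The dual computation, with the roles of $\psi,\psi'$ and $A,A'$ exchanged and using the injectivity furnished by $\psi'$, gives $\phi\circ_A\phi'=1_{A'}$. Thus $\phi$ is an isomorphism with inverse $\phi'$, and its uniqueness was already secured in the first step.

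The ``similarly for other kinds'' clause I would dispatch by observing that this three-step template—build the comparison map from one universal property, build its candidate inverse from the other, then force the two round-trips to be identities via associativity, unitality, and injectivity—applies verbatim to each of the five kinds of universal morphism in the definition; only the ambient homsets and the relevant composition operation change (for the nonlinear and mixed cases one replaces $\circ_A$ by $\circ_X$, and the resulting isomorphism lives in the appropriate nonlinear homset). The one place demanding genuine care, and the closest thing to an obstacle, is the bookkeeping with the symmetric and cartesian structural actions: one must check that the reorderings used to view $\psi'$ inside the codomain list of the target homset, and to match up domains and codomains when forming $\phi'\circ_{A'}\phi$ and $\phi\circ_A\phi'$, are exactly the equivariance isomorphisms, so that the equivariance axioms make all the bijections and composites compatible. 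Once that naturality is invoked, the remainder is a formal consequence of the polycategory axioms, just as in the uniqueness of adjoints.
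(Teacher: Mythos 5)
Your proposal is correct and is exactly the argument the paper gives, only spelled out: the paper's proof reads, in full, ``As usual, $\phi$ is determined by applying the universal property of $\psi$ to $\psi'$, and conversely for its inverse,'' which is precisely your three-step template (comparison map from one universal property, candidate inverse from the other, round-trips forced to be identities by injectivity of the universal bijections together with unitality and associativity). Your additional attention to the structural permutations and the other four kinds of universal morphism fills in details the paper leaves implicit but introduces nothing different in substance.
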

\begin{proof}
  As usual, $\phi$ is determined by applying the universal property of $\psi$ to $\psi'$, and conversely for its inverse.
\end{proof}

We now explore the most important cases of universality, starting with versions of the polycategorical representability conditions from~\cite{cs:wkdistrib,bz:bifib-poly}.
For clarity and conciseness, we indicate the object in which a universal morphism is universal by underlining it, e.g.\ $\psi \in \Pl(\Theta|\Gamma,\underline{A};\Delta)$.

\begin{defi}
  Let $A,B$ be linear objects in an \lnl polycategory \cP.
  \begin{itemize}
  \item A \textbf{tensor product} of $A,B$ is a universal morphism $\psi \in \Pl(|A,B;\underline{A\ten B})$.
  \item A \textbf{cotensor product} of $A,B$ is a universal morphism $\psi \in \Pl(|\underline{A\coten B}; A,B)$.
  \item A \textbf{unit} $\unit$ is a universal morphism $\psi\in \Pl(|;\,\underline{\unit})$.
  \item A \textbf{counit} $\counit$ is a universal morphism $\psi\in \Pl(|\underline{\counit};)$.
  \item A \textbf{dual} of $A$ is a universal morphism $\psi\in \Pl(|A,\underline{\d A};)$.
  \end{itemize}
  We say that \cP ``\textbf{has $\ten$}'' if any $A,B$ have a tensor product, and so on.
\end{defi}

A dual is equivalently a universal morphism $\psi\in \Pl(|;\,A,\underline{\d A})$;
see e.g.~\cite{bz:bifib-poly}.

These universal properties specialize in the case $\Theta=\emptyset$ to the like-named ones in the symmetric polycategory $\Plin$.
Thus, as shown in~\cite{cs:wkdistrib,bz:bifib-poly}, if an \lnl polycategory has all $\ten,\coten,\unit,\counit$ then $\Plin$ is a \textbf{linearly distributive category}, and if it also has all $\duals$ then $\Plin$ is \textbf{$\ast$-autonomous}~\cite{barr:staraut,barr:staraut-ll,cs:wkdistrib}.

We similarly have tensors and units of \emph{nonlinear} objects, but these turn out to coincide with cartesian \emph{products}, by the following folklore analogue of the equivalence between positive and negative presentations of product types in structural logic.

\begin{prop}\label{thm:cartprod}
  The following are equivalent for objects $X,Y$ and $X\times Y$ of an \lnl polycategory.
  \begin{enumerate}
  \item There is a universal morphism $\psi \in \Pnl(X,Y;\underline{X\times Y})$.
    In other words, composing with $\psi$ induces bijections\label{item:cart3}
    \begin{align*}
      \Pnl(\Theta,X\times Y;Z)&\toiso \Pnl(\Theta,X,Y;Z)\\
      \Pl(\Theta,X\times Y|\Gamma;\Delta) &\toiso \Pl(\Theta,X,Y|\Gamma;\Delta).
    \end{align*}
  \item There is a morphism $\psi \in \Pnl(X,Y;X\times Y)$ inducing bijections\label{item:cart2}
    \begin{align*}
      \Pnl(\Theta,X\times Y;Z)&\toiso \Pnl(\Theta,X,Y;Z)
    \end{align*}
  \item There are $\pi_1 \in \Pnl(X\times Y;X)$ and $\pi_2 \in \Pnl(X\times Y;Y)$ inducing bijections\label{item:cart1}
    \[ \Pnl(\Theta;X\times Y) \toiso \Pnl(\Theta;X) \times \Pnl(\Theta;Y). \]
  \item There are morphisms $\psi \in \Pnl(X,Y;X\times Y)$ and $\pi_1 \in \Pnl(X\times Y;X)$ and $\pi_2 \in \Pnl(X\times Y;Y)$ such that the composites
    \begin{mathpar}
      (X,Y) \xto{\psi} X\times Y \xto{\pi_1} X\and
      (X,Y) \xto{\psi} X\times Y \xto{\pi_2} Y\and
      (X\times Y, X\times Y) \xto{(\pi_1,\pi_2)} (X,Y) \xto{\psi} X\times Y
    \end{mathpar}
    are the image of identities under structural maps.\label{item:cart4}
  \end{enumerate}
\end{prop}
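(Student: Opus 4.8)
The plan is to run the cycle (i) $\Rightarrow$ (ii) $\Rightarrow$ (iii) $\Rightarrow$ (iv) $\Rightarrow$ (i). The guiding principle is that $\Pnonlin$ is a \emph{cartesian} multicategory, so its structural maps comprise \emph{all} functions and not merely permutations; the weakenings and contractions this provides are exactly what force the ``positive'' presentation (a universal $\psi$ out of $(X,Y)$) to agree with the ``negative'' one (projections into $X$ and $Y$), in contrast to the linear tensor. Throughout I write ``beta'' for the equations saying that $\pi_1\circ_{X\times Y}\psi$ and $\pi_2\circ_{X\times Y}\psi$ are $1_X$ and $1_Y$ weakened by a dummy argument, and ``eta'' for the equation that $\psi\circ_X\pi_1\circ_Y\pi_2$ becomes $1_{X\times Y}$ after its two $X\times Y$ inputs are contracted along the diagonal --- this contraction being the ``structural map'' implicit in the third displayed equation of (iv), which is not literally the image of an identity under an injective structural map but becomes one after this contraction.

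The implication (i) $\Rightarrow$ (ii) is immediate, since the first of the two bijections asserted in (i) is precisely (ii). For (ii) $\Rightarrow$ (iii), let $\Phi$ denote the bijection $g\mapsto g\circ_{X\times Y}\psi$ of (ii), and define $\pi_1,\pi_2$ to be the $\Phi$-preimages, taken at $\Theta=\emptyset$, of the two weakened identities, so that beta holds by construction. I then claim that $h\mapsto(\pi_1\circ_{X\times Y}h,\ \pi_2\circ_{X\times Y}h)$ realizes the bijection $\Pnl(\Theta;X\times Y)\toiso\Pnl(\Theta;X)\times\Pnl(\Theta;Y)$ of (iii), with inverse sending $(f,g)$ to the diagonal contraction of $\psi\circ_X f\circ_Y g$. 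One round-trip follows from beta once one observes that a weakened identity substituted against a pair and then contracted simply selects the intended branch; the other round-trip is eta, which I derive from the injectivity of $\Phi$ by checking that both $1_{X\times Y}$ and the contracted composite $\psi\circ_X\pi_1\circ_Y\pi_2$ are carried to $\psi$ by $\Phi$.

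For (iii) $\Rightarrow$ (iv) I instantiate the bijection of (iii) at $\Theta=(X,Y)$ and let $\psi$ be the element corresponding to the pair of weakened identities; beta is then immediate, and eta follows from the injectivity in (iii) at $\Theta=X\times Y$, since the two candidate morphisms induce the same pair of projections. Finally, for (iv) $\Rightarrow$ (i) I build both asserted bijections by a single construction: the forward map precomposes with $\psi$ along $X\times Y$, while its candidate inverse substitutes $\pi_1$ and $\pi_2$ into the nonlinear slots $X$ and $Y$ and then contracts the two resulting $X\times Y$ inputs. That these are mutually inverse reduces, by associativity of composition and the equivariance of the structural action against composition, to eta for one composite and to beta for the other.

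The step I expect to be the crux is this last one, and specifically the passage to the \emph{linear} bijection $\Pl(\Theta,X\times Y|\Gamma;\Delta)\toiso\Pl(\Theta,X,Y|\Gamma;\Delta)$, which is genuinely new content beyond the folklore cartesian-multicategory statement, where only homsets of the form $\Pnl(\Theta;Z)$ appear. The saving observation is that the inverse construction and the beta/eta verifications never inspect the codomain or output sort of the morphisms being transformed --- they manipulate only the nonlinear context, via nonlinear-into-linear composition and the structural action on that context --- so the identical calculation applies verbatim with a linear homset $\Pl(\Theta|\Gamma;\Delta)$ in place of $\Pnl(\Theta;Z)$. The residual difficulty is purely bookkeeping: tracking which structural map (a weakening, or a diagonal contraction) is in play at each stage and invoking equivariance in the correct direction.
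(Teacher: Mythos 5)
Your proof is correct and takes essentially the same approach as the paper: both arguments extract $\pi_1,\pi_2$ (resp.\ $\psi$) as preimages of weakened identities (resp.\ of the pair of weakened identities) under the assumed bijections, verify the beta/eta equations of (iv), and then obtain the inverse bijections by composing with $(\pi_1,\pi_2)$ or $\psi$ followed by a structural map, with beta/eta showing the two round-trips are identities. The only differences are organizational --- you run the cycle (i) $\Rightarrow$ (ii) $\Rightarrow$ (iii) $\Rightarrow$ (iv) $\Rightarrow$ (i) while the paper routes everything through (iv) --- and your reading of the third displayed equation of (iv) as ``becomes the identity after contracting the two $X\times Y$ inputs'' is indeed the intended (and the only workable) interpretation.
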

\begin{proof}
  Of course~\ref{item:cart3} implies~\ref{item:cart2}, so it suffices to prove that~\ref{item:cart2} and~\ref{item:cart1} each imply~\ref{item:cart4} and that~\ref{item:cart4} implies~\ref{item:cart3} and~\ref{item:cart1}.

  Assuming~\ref{item:cart2}, let $\pi_1:X\times Y \to X$ be the image of $1_X$ under the composite
  \[ \Pnl(X;X) \to \Pnl(X,Y;X) \toiso \Pnl(X\times Y;X), \]
  of a structural map and the universal property of~\ref{item:cart2}, and similarly for $\pi_2$.
  The equations in~\ref{item:cart4} hold by the universal property.

  Assuming~\ref{item:cart1}, $\psi : (X,Y) \to X\times Y$ is the image of $(1_X,1_Y)$ under the composite
  \[ \Pnl(X;X) \times \Pnl(Y;Y) \to \Pnl(X,Y;X) \times \Pnl(X,Y;Y) \to \Pnl(X,Y;X\times Y) \]
  of structural maps with the universal property of~\ref{item:cart1}.
  Again, the equations in~\ref{item:cart4} hold by the universal property.

  Conversely, assuming~\ref{item:cart4}, the right-to-left directions of~\ref{item:cart3} are composing with $(\pi_1,\pi_2)$ and a structural map, while the right-to-left direction of~\ref{item:cart1} is composing with $\psi$ and a structural map.
  These are inverses by the equations in~\ref{item:cart4}.
\end{proof}

We will refer to such an $X\times Y$ as a \textbf{product} of $X$ and $Y$.
There is an analogue for nullary products and terminal nonlinear objects, denoted $1$ (not to be confused with the linear $\unit$).
By \cref{thm:cartprod}\ref{item:cart1}, if all $\times,1$ exist then $\Pnonlin$ is a \textbf{cartesian monoidal category}.
Note that these are essentially facts about cartesian multicategories, which extend automatically to an \lnl polycategory \cP from $\Pnonlin$.

\begin{cor}
  Any functor of \lnl polycategories preserves nonlinear products and terminal objects.
\end{cor}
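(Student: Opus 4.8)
The plan is to reduce everything to the equational characterization of a nonlinear product given in \cref{thm:cartprod}\ref{item:cart4}, which is manifestly preserved by any functor.

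First I would fix a functor $H:\cP\to\cQ$ of \lnl polycategories and a product $X\times Y$ of nonlinear objects in \cP, presented as in \cref{thm:cartprod}\ref{item:cart4} by morphisms $\psi\in\Pnl(X,Y;X\times Y)$, $\pi_1\in\Pnl(X\times Y;X)$, and $\pi_2\in\Pnl(X\times Y;Y)$ satisfying the three displayed equations; recall that the right-hand sides of those equations are images of identities under structural maps. By \cref{defn:lnlpoly-2cat}, $H$ preserves composites, identities, and structural actions, so applying $H$ to those three equations yields the corresponding equations among $H\psi$, $H\pi_1$, $H\pi_2$ in \cQ. Thus $H(X\times Y)$, equipped with $H\psi$, $H\pi_1$, $H\pi_2$, satisfies clause \ref{item:cart4} of \cref{thm:cartprod} in \cQ, and therefore by that proposition (reading \ref{item:cart4}$\Rightarrow$\ref{item:cart1}, say) it is a product of $HX$ and $HY$; in particular $H$ carries the universal morphism $\psi$ to a universal one, which is exactly what it means to preserve the product. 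The same argument, applied instead to the nullary analogue of \cref{thm:cartprod} noted just after its proof, shows that $H$ sends a terminal nonlinear object to a terminal nonlinear object.

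There is essentially no obstacle here: the whole content sits inside \cref{thm:cartprod}, whose role is precisely to trade the ``positive'' universal property of a nonlinear product --- which $H$ has no \emph{a priori} reason to respect --- for an equivalent ``negative'', purely equational one that every functor automatically preserves. The only subtlety worth flagging is that one must route through clause \ref{item:cart4}, rather than directly through \ref{item:cart1} or \ref{item:cart3}, exactly because a functor of \lnl polycategories is not assumed to preserve universal morphisms in general.
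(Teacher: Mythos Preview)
Your proposal is correct and takes essentially the same approach as the paper: the paper's proof is the single sentence ``The equations in \cref{thm:cartprod}\ref{item:cart4} are preserved by any functor,'' and you have simply spelled out what that sentence means.
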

\begin{proof}
  The equations in \cref{thm:cartprod}\ref{item:cart4} are preserved by any functor.
\end{proof}

\begin{rem}
  If we changed notation as suggested in \cref{rmk:double-split} to regard the nonlinear objects (or the ``right-hand'' ones) as instead forming a co-cartesian co-multicategory, then the identical operations $\times$ and $1$ would instead behave like a coproduct and an initial object (and hence would be better denoted $+$ and $\varnothing$).
\end{rem}

We now consider the \textbf{exponential modalities} (a.k.a.\ \textbf{storage modalities}) that relate linear and nonlinear objects.

\begin{defi}
  Let $X$ be a nonlinear object and $A$ a linear one.
  \begin{itemize}
  \item An \textbf{$\foc$-modality} is a universal morphism $\psi\in\Pl(X|;\,\underline{\foc X})$.
  \item A \textbf{$\uoc$-modality} is a universal morphism $\psi\in\Pl(\underline{\uoc A}|;\,A)$.
  \item An \textbf{$\fwn$-modality} is a universal morphism $\psi\in\Pl(X|\underline{\fwn X};)$.
  \item A \textbf{$\uwn$-modality} is a universal morphism $\psi\in\Pl(\underline{\uwn A}|A;)$.
  \end{itemize}
\end{defi}

Thus, the exponential modalities are characterized by natural bijections
\begin{alignat*}{2}
  \Pl(\Theta,X|\Gamma;\Delta) &\cong \Pl(\Theta|\Gamma,\foc X;\Delta) &\qquad
  \Pl(\Theta|{};\,A) &\cong \Pnl(\Theta;\uoc A)\\
  \Pl(\Theta,X|\Gamma;\Delta) &\cong \Pl(\Theta|\Gamma;\Delta,\fwn X) &\qquad
  \Pl(\Theta|A;) &\cong \Pnl(\Theta;\uwn A).
\end{alignat*}
Note that $\foc$ and $\uoc$ are covariant, while $\fwn$ and $\uwn$ are contravariant.
We will see below that these are adjoint in pairs, $\foc\adj\uoc$ and $\uwn\adj\fwn$, and induce the usual comonad $\oc = \foc\uoc$ and monad $\wn = \fwn\uwn$.

We can also consider internal-homs of various sorts.

\begin{defi}
  Let $X,Y$ be nonlinear objects and $A,B$ be linear objects.
  \begin{itemize}
  \item A \textbf{linear hom} is a universal morphism $\psi\in\Pl(|\underline{A\hom B}, A; B)$.
  \item A \textbf{linear co-hom} is a universal morphism $\psi\in\Pl(|B; \underline{B\cohom A}, A)$.
  \item A \textbf{nonlinear hom} is a universal morphism $\psi\in\Pnl(\underline{X\to Y}, X; Y)$.
  \item A \textbf{mixed hom} is one of the following:\footnote{As notational mnemonics,
      the arrowhead in $\to,\mixedhom,\mixedeechom$ indicates the domain object is nonlinear,
      the open circle in $\hom,\mixedhom$ indicates the codomain object and hom-object are both linear,
      and the closed circle in $\eechom,\mixedeechom$ indicates the codomain object is linear but the hom-object is nonlinear.}
    \begin{itemize}
    \item a universal morphism $\psi\in\Pl(X | \underline{X\mixedhom B}; B)$.
    \item a universal morphism $\psi\in\Pl(\underline{A\eechom B}| A ; B)$.
    \item a universal morphism $\psi\in\Pl(\underline{X\mixedeechom B} , X | ;\, B)$.
    \end{itemize}
  \end{itemize}
\end{defi}

Thus, these various kinds of homs are characterized by bijections
\begin{align*}
  \Pl(\Theta|\Gamma,A;\Delta,B) &\cong \Pl(\Theta|\Gamma;\Delta,A\hom B)\\
  \Pl(\Theta|\Gamma,B;\Delta,A) &\cong \Pl(\Theta|\Gamma,B\cohom A;\Delta)\\
  \Pnl(\Theta,X;Y) &\cong \Pnl(\Theta;X\to Y)\\
  \Pl(\Theta,X | \Gamma ;\Delta, B) &\cong \Pl(\Theta|\Gamma;\Delta, X \mixedhom B)\\
  \Pl(\Theta | A ; B) &\cong \Pnl(\Theta ; A \eechom B)\\
  \Pl(\Theta,X | ;\, B) &\cong \Pnl(\Theta ; X \mixedeechom B).
\end{align*}
In particular:
\begin{itemize}
\item If $\ten,\unit,\hom$ exist then the monoidal structure $\ten$ on $\Plin$ is closed.
\item If $\coten,\counit,\cohom$ exist then the monoidal structure $\coten$ on $\Plin$ is coclosed.
\item If $\times,1,\to$ exist then $\Pnonlin$ is cartesian closed.
\end{itemize}

The mixed homs suggest analogous \textbf{mixed tensor products}, such as universal morphisms $\psi\in \Pl(X | A ; \underline{X\rtimes A})$,
or $\psi\in\Pl(X,Y | ; \, \underline{X\boxtimes Y})$.
However, lest we start to feel the zoo of universal properties is too large, we note that the more exotic sorts can be constructed from the simpler ones in the following sense.

\begin{prop}\label{thm:univ-comp}
  If $\psi$ is universal in $R$, while $\phi$ contains $R$ in its domain or codomain and is universal in a different object $S$, then $\psi\circ_R \phi$ is universal in $S$.
\end{prop}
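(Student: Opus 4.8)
The plan is to unwind both universality hypotheses as bijections on hom-sets and then observe that composing along $R$ and then along $S$ (in a single composite $\psi \circ_R \phi$) realizes the composite of two bijections, which is again a bijection. Concretely, fix the type of test morphism appropriate to "universal in $S$" for the composite $\psi \circ_R \phi$ — that is, a morphism with $S$ in the relevant domain or codomain slot, together with the rest of the contexts of $\psi \circ_R \phi$. The claim that $\psi \circ_R \phi$ is universal in $S$ says that precomposing (or postcomposing, depending on the side $S$ appears) with $\psi \circ_R \phi$ along $S$ is a bijection from such test morphisms onto morphisms whose contexts are those of $\psi \circ_R \phi$ with the $S$-slot replaced by the full merged context.

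The key step is to factor that operation. Composition in an \lnl polycategory is associative, so composing a test morphism $\chi$ along $S$ with $\psi \circ_R \phi$ equals first composing $\chi$ along $S$ with $\phi$ — legitimate since $S$ appears in a domain/codomain slot of $\phi$, not of $\psi$ — and then composing the result along $R$ with $\psi$. Since $\phi$ is universal in $S$, the first operation $\chi \mapsto \chi \circ_S \phi$ is a bijection onto the set of morphisms with $\phi$'s contexts (with $S$ replaced), and in particular such morphisms still contain $R$ in the slot coming from $\psi$'s side (because that slot of $\psi$ survives in $\psi \circ_R \phi$, so it is present throughout). Since $\psi$ is universal in $R$, the second operation — composing along $R$ with $\psi$ — is then a bijection onto morphisms with the fully merged contexts. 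A composite of bijections is a bijection, so $\psi \circ_R \phi$ is universal in $S$. One should also check equivariance/naturality is automatic, but this is routine given that all the structural actions are compatible with composition.

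The one genuine obstacle is bookkeeping: there are several cases according to whether $R$ and $S$ are linear or nonlinear, and whether they sit in a domain or a codomain; so "the contexts of $\psi \circ_R \phi$" and "the slot where $R$ survives" must be spelled out per case, or one argues once and for all that the relevant associativity/equivariance identity holds in each. I would prove one representative case in detail — say $\phi \in \Pl(\Theta|\Gamma,A;\Delta)$ universal in a linear $S$ appearing elsewhere in $\phi$, and $\psi \in \Pl(\Theta'|\Gamma';\Delta',A)$ universal in $A=R$, so $\psi \circ_A \phi$ has $A$ eliminated and $S$ still present — and then remark that the other combinations (with $R$ or $S$ nonlinear, or in codomain position, using that a dual has the two equivalent forms noted after the definition) are entirely analogous, the only input being the associativity and equivariance axioms of \cref{defn}. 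The main thing to be careful about is that when $R$ appears in $\psi$ and $S$ in $\phi$, after forming $\psi \circ_R \phi$ the object $S$ indeed occupies a genuine domain/codomain slot of the composite (it is not consumed by the composition along $R$), so that "universal in $S$" is even a meaningful assertion about $\psi \circ_R \phi$; this is immediate from the composition rules, which only eliminate the single object $R$.
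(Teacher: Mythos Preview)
Your proposal is correct and follows essentially the same approach as the paper: factor ``compose with $\psi\circ_R\phi$ along $S$'' as ``compose with $\phi$ along $S$, then with $\psi$ along $R$'' via associativity, and conclude that a composite of bijections is a bijection. The paper's proof here is in fact even terser than yours---it literally says the many cases ``all reduce to `the composite of bijections is a bijection'\,''---and defers the rigorous treatment to a later proposition (\cref{thm:univ-comp2}), where the entries-only reformulation of \lnl polycategories collapses your acknowledged case analysis into a single pullback-pasting argument; your explicit acknowledgment of the bookkeeping and suggestion to do one representative case is exactly the issue that the entries-only framework is introduced to circumvent.
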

\begin{proof}
  There are a number of different versions of this statement depending on the types of $R,S,\psi,\phi$ and whether the objects occur in domain or codomain, but they all reduce to ``the composite of bijections is a bijection''.
  See \cref{thm:univ-comp2} for a more rigorous proof.
\end{proof}

One instance of this is the associativity of tensors: given universal morphisms
\begin{alignat*}{2}
  \psi_1 &\in \Pl( | A,B; \underline{A\ten B}) &\qquad
  \psi_3 &\in \Pl( | A\ten B, C; \underline{(A\ten B)\ten C}) \\
  \psi_2 &\in \Pl(| B,C; \underline{B\ten C}) &\qquad
  \psi_4 &\in \Pl( | A, B\ten C; \underline{A\ten (B\ten C)})
\end{alignat*}
the two composites
\begin{align*}
  \psi_3 \circ_{A\ten B} \psi_1 &\in \Pl(| A,B,C; \underline{(A\ten B)\ten C}) \\
  \psi_4 \circ_{B\ten C} \psi_2 &\in \Pl(|A,B,C; \underline{A\ten (B\ten C)})
\end{align*}
are both universal, hence by \cref{thm:univ-uniq} there is an induced isomorphism
\[(A\ten B)\ten C \cong A\ten (B\ten C).\]
This is how $(\ten,\unit)$ is shown to be a monoidal structure, and similarly for $(\coten,\counit)$ and (if we like) $(\times,1)$.

Another familiar instance is that in a $\ast$-autonomous category, linear homs can be defined in terms of duals and cotensors if these exist.
Given universal morphisms
\begin{mathpar}
  \psi_1 \in \Pl(| \underline{\d A},A;) \and
  \psi_2 \in \Pl(| \underline{\d A \coten B}; \d A,B)
\end{mathpar}
their composite $\psi_1 \circ_{\d A} \psi_2 \in \Pl(|\underline{\d A \coten B}, A;B)$ is universal in $\d A \coten B$, exhibiting it as $A\hom B$.
Similarly, we have $B \cohom A = \d A \ten B$, and De Morgan duality:
\begin{mathpar}
  A \coten B = \d{(\d A\ten \d B)}
  \and
  \counit = \d \unit
  \and
  \fwn X = \d{(\foc X)}
  \and
  \uwn A = \uoc(\d A)
\end{mathpar}
In particular, $\Plin$ is $\ast$-autonomous as soon as $\cP$ has $\ten,\unit,\duals$.
And as in a $\ast$-autonomous category, duals can be constructed by homming into the counit:
\[ \d A = A \hom \counit . \]

Less familiar instances of \cref{thm:univ-comp} relate the modalities to the tensors and homs, particularly the mixed ones: we have
\begin{alignat*}{2}
  X \mixedhom B &= \foc X \hom B &\qquad
  X \rtimes A &= \foc X \ten A \\
  A \eechom B &= \uoc(A\hom B) &\qquad
  X\boxtimes Y &= \foc(X\times Y) \\
  X \mixedeechom B &= \uoc (\foc X \hom B) &\qquad
  X\boxtimes Y &= \foc X \ten \foc Y\\
  X \mixedeechom B &= X \to \uoc B &\qquad
  \unit &= \foc 1\\
  \uoc A &= \unit \eechom A &\qquad
  \foc X &= X \rtimes \unit\\
  \uoc A &= 1 \mixedeechom A &\qquad
  \foc X &= X \boxtimes 1
\end{alignat*}
whenever all the operations on the right-hand side exist.
In particular, since both $\foc(X\times Y)$ and $\foc X \ten \foc Y$ have the universal property of $X\boxtimes Y$, they are isomorphic if they both exist.
(This is, of course, closely related to Seely's characterization of the modality $\oc$; see \cref{rmk:seely}.)
Thus, if $\ten,\unit,\times,1,\foc$ exist then $\foc$ is a strong monoidal functor.
Similarly, if both $\uoc (\foc X \hom B)$ and $X \to \uoc B$ exist they are isomorphic (which is related to Girard's embedding of nonlinear logic in linear logic); if $\fwn(X\times Y)$ and $\fwn X \coten \fwn Y$ exist they are isomorphic; and so on.

\begin{rem}\label{thm:univ-iso}
  As a trivial instance, a unary co-unary linear morphism, i.e.\ one of the form $\psi \in \Pl( | A; B)$, is universal if and only if it is an isomorphism (and similarly in the nonlinear case).
  Thus, \cref{thm:univ-comp} also implies that universal morphisms are stable under composition with isomorphisms, conversely to \cref{thm:univ-uniq}.
\end{rem}

We can also consider limits and colimits in \lnl polycategories.
In general, we require a \textbf{limit} of a diagram of linear or nonlinear objects (and unary co-unary morphisms) to induce bijections on all hom-sets where it appears in the codomain, and similarly for a \textbf{colimit} whenever it appears in the domain.
(In the case of products and coproducts, this definition appears in~\cite{pastro:sp-poly}.)
The simplest case of this is that a limit of nonlinear objects satisfies
\begin{equation}
  \Pnl(\Theta;\lim_i X_i)\cong \lim_i \Pnl(\Theta;X_i),\label{eq:nllim}
\end{equation}
generalizing \cref{thm:cartprod}\ref{item:cart1} and reducing to an ordinary limit in the cartesian monoidal $\Pnonlin$ if $\times,1$ exist.
However, a colimit of nonlinear objects satisfies both
\begin{align}
  \Pnl(\Theta,\colim_i X_i;Y) &\cong \lim_i \Pnl(\Theta,X_i;Y)\label{eq:nlcolim1}\\
  \Pl(\Theta,\colim_i X_i|\Gamma;\Delta) &\cong \lim_i \Pl(\Theta,X_i|\Gamma;\Delta)\label{eq:nlcolim2}
\end{align}
induced by the same universal cocone.
This implies that the colimit is
\begin{enumerate}
\item preserved in each variable by $\times$, insofar as $\times$ exists;
\item sent by $\foc$ to a colimit in $\Plin$ that is preserved in each variable by $\ten$, insofar as $\foc,\ten$ exist; and
\item sent by $\fwn$ to a limit in $\Plin$ that is preserved in each variable by $\coten$, insofar as $\fwn,\coten$ exist.
\end{enumerate}
Moreover, if all $\times,\foc,\fwn,\ten,\coten$ exist, then a colimit in the ordinary category $\Pnonlin$ is a colimit in $\cP$ if and only if it is preserved in these ways.

Similarly, a colimit of linear objects satisfies
\begin{equation}
  \Pl(\Theta|\Gamma,\colim_i A_i;\Delta) \cong \lim_i \Pl(\Theta|\Gamma,A_i;\Delta)\label{eq:lcolim}
\end{equation}
which implies that it is preserved by $\ten$ in each variable and sent by $\uwn$ to a limit in $\Pnonlin$, insofar as $\ten,\uwn$ exist.
If all $\ten,\coten,\counit,\foc$ exist, then a colimit in the ordinary category $\Plin$ is a colimit in \cP if and only if it is preserved by $\ten$.
Dually, a limit of linear objects satisfies
\begin{equation}
  \Pl(\Theta|\Gamma;\Delta,\lim_i A_i) \cong \lim_i \Pl(\Theta|\Gamma;\Delta,A_i)\label{eq:llim}
\end{equation}
which implies that it is preserved by $\coten$ in each variable and sent by $\uoc$ to a limit in $\Pnonlin$, insofar as $\coten,\uoc$ exist.
And if all $\coten,\ten,\unit,\foc$ exist, a colimit in $\Plin$ is a colimit in \cP if and only if it is preserved by $\coten$.
Note also that $\ten$ preserves all colimits if $\hom$ exists, $\foc$ preserves all colimits if $\uoc$ exists, and so on.

We will write $X+Y$ for the coproduct of nonlinear objects and $\varnothing$ for the initial nonlinear object, and we denote finite products and coproducts of linear objects with Girard's notation for the linear logic additive connectives: $A\with B$ for the product, $A\oplus B$ for the coproduct, $\top$ for the terminal object, and $0$ for the initial object.
Thus the above preservation properties state that
\begin{alignat*}{2}
  X\times (Y+Z) &\cong (X\times Y) + (X\times Z)&\qquad
  X \times \varnothing &\cong \varnothing\\
  \foc(X+Y) &\cong \foc X \oplus \foc Y&\qquad
  \foc\varnothing&\cong 0\\
  \fwn(X+Y) &\cong \fwn X \with \fwn Y&\qquad
  \fwn\varnothing&\cong \top\\
  A \ten (B\oplus C) &\cong (A\ten B) \oplus (A\ten C)&\qquad
  A\ten 0 &\cong 0\\
  \uwn (A\oplus B) &\cong \uwn A \times \uwn B&\qquad
  \uwn 0 &\cong 1\\
  A \coten (B\with C) &\cong (A\coten B) \with (A\coten C)&\qquad
  A \coten \top &\cong \top\\
  \uoc (A\with B) &\cong \uoc A \times \uoc B&\qquad
  \uoc \top &\cong 1
\end{alignat*}

If we specialize the above universal properties to symmetric polycategories, symmetric multicategories, cartesian multicategories, or \lnl multicategories, there are three possible results.
Some universal properties make sense unmodified, such as $\ten,\coten$ in polycategories or $\times,\to$ in cartesian multicategories.
Others make no sense at all, such as $\coten,\counit$ in \lnl multicategories or $\foc,\uoc$ in symmetric polycategories.

A third group can only have a restricted universal property.
Specifically, limits and colimits in a symmetric multicategory or \lnl multicategory can only induce bijections of hom-sets with unary codomain: instead of~\eqref{eq:nlcolim2}--\eqref{eq:llim} we assert only
\begin{align*}
  \Pl(\Theta,\colim_i X_i|\Gamma;B) &\cong \lim_i \Pl(\Theta,X_i|\Gamma;B)\\
  \Pl(\Theta|\Gamma,\colim_i A_i;B) &\cong \lim_i \Pl(\Theta|\Gamma,A_i;B)\\
  \Pl(\Theta|\Gamma;\lim_i A_i) &\cong \lim_i \Pl(\Theta|\Gamma;A_i).
\end{align*}
Since the left- and right-hand sides of~\eqref{eq:nlcolim2}--\eqref{eq:llim} have the same codomain arity, these apparently-weaker universal properties are equivalent to~\eqref{eq:nlcolim2}--\eqref{eq:llim} for limits and colimits over \emph{nonempty} domain categories.
But the limit of the empty diagram of copies of the empty set is no longer empty, so an initial or terminal object in an \lnl multicategory \cE (in the above sense) need not be initial or terminal in \cE \emph{qua} \lnl polycategory.

In fact, an \lnl multicategory \emph{cannot} have a terminal linear object, or an initial linear or nonlinear object, in the \lnl-polycategorical sense.
For example, if $\top$ is a terminal linear object, we must have $\Pl(\Theta | \Gamma ; \Delta,\top) = 1$ for \emph{all} $\Delta$, whereas in an \lnl multicategory we have $\Pl(\Theta | \Gamma ; \Delta,\top) = \emptyset$ if $|\Delta|>0$.
This is already the case for ordinary multicategories and polycategories.

The categorization of universal properties in these four subcategories into these three groups is shown in \cref{fig:univprop}.

\begin{table}
  \begin{tabular}{c|c|c|c}
    & Unmodified & Nonsensical & Modified\\\hline
    polycategories &
    \makecell{$\ten,\unit,\coten,\counit,\duals$,\\$\hom,\cohom,\with,\oplus,\top,0$} &
    $\times,1,\to,\foc,\uoc,\fwn,\uwn,+,\varnothing$ &
    \\\hline
    symm.\ multi. &
    $\ten,\unit,\hom,\with,\oplus$&
    \makecell{$\coten,\counit,\duals,\cohom,\times,1,\to$,\\$\foc,\uoc,\fwn,\uwn,+,\varnothing$} &
    $\top,0$\\\hline
    cart.\ multi. &
    $\times,1,\to,+,\varnothing$&
    \makecell{$\ten,\unit,\coten,\counit,\duals,\hom,\cohom$,\\$\foc,\uoc,\fwn,\uwn,\with,\oplus,\top,0$} &
    \\\hline
    \lnl multi. &
    \makecell{$\times,1,\to,\ten,\unit,\hom$,\\$\with,\oplus,\foc,\uoc$}&
    $\coten,\bot,\duals,\cohom,\fwn,\uwn$ &
    $\top,0$
  \end{tabular}~\\[1em]{}
\caption{Universal properties in subcategories}
\label{fig:univprop}
\end{table}

\section{Relation to the literature}
\label{sec:relation-literature}

By our observations in \cref{sec:lnl-polycategories}, the following categorical structures can be identified with certain \lnl polycategories:
\begin{itemize}
\item Symmetric monoidal categories.
\item Symmetric monoidal categories with any desired limits, and any desired colimits that are preserved in each variable by the tensor product.
\item Closed symmetric monoidal categories, with any desired limits and colimits (the latter automatically preserved by the tensor product, due to closedness).
\item Cartesian monoidal categories.
\item Cartesian monoidal categories with any desired limits, and any desired colimits that are preserved in each variable by the cartesian product.
\item Cartesian closed categories, with any desired limits and colimits.
\item Symmetric linearly distributive categories.
\item Symmetric linearly distributive categories with any desired colimits that are preserved in each variable by the tensor product, and any desired limits that are preserved in each variable by the cotensor product.
\item (Symmetric) $\ast$-autonomous categories, with any desired limits and colimits.
\end{itemize}
The ``strong'' morphisms between these structures (those that preserve all the asserted categorical structure up to coherent isomorphisms) can also be identified with functors of \lnl polycategories that preserve the relevant universal properties, and similarly for the transformations.
In other words, the standard 2-categories of the above structures are equivalent to locally full sub-2-categories of \lnlpoly.

We now add the modalities, starting with the ``intuitionistic'' case of \lnl multi\-categories.
These are designed to model split-context intuitionistic linear logic syntaxes such as~\cite{benton:lnl,barber:dill}, without necessarily assuming that any connectives exist.
But if enough connectives do exist, they reduce to a better-known notion of model for intuitionistic multiplicative-exponential linear logic:

\begin{prop}\label{thm:lnl-adj}
  An \lnl multicategory in which the modality $\foc$ exists is uniquely determined by a functor of symmetric multicategories
  \[ \foc : \Pnonlin \to \Plin \]
  where $\Pnonlin$ is a cartesian multicategory and $\Plin$ a symmetric one.
  Moreover:
  \begin{enumerate}
  \item The modality $\uoc$ also exists if and only if the functor $\foc$ has a right adjoint (in the 2-category of symmetric multicategories).\label{item:lnlmulti1}
  \item If $\times,1,\ten,\unit$ exist, then $\foc$ is equivalently a strong symmetric monoidal functor from a cartesian monoidal category to a symmetric monoidal one.\label{item:lnlmulti2}
  \item Thus, an \lnl multicategory with $\times,1,\ten,\unit,\foc,\uoc$ is equivalently an \textbf{\emph{\lnl adjunction}} \cite{benton:lnl,mellies:catsem-ll}: a symmetric monoidal adjunction from a cartesian monoidal category to a symmetric monoidal one.\label{item:lnlmulti3}
  \end{enumerate}
\end{prop}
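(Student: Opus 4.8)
The plan is to read the functor $\foc$ off the modality, reconstruct the entire \lnl multicategory from it, and then obtain the three numbered items from the representability and adjunction machinery already in place.

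First I would treat the forward direction. Any \lnl multicategory has an underlying cartesian multicategory $\Pnonlin$ and symmetric multicategory $\Plin$ (as in \cref{rmk:slice}), and the assumed $\foc$-modalities give a function $X\mapsto \foc X$ on objects. To act on morphisms, given $f\in\Pnl(\Theta;Y)$ I would form $\psi_Y\circ_Y f\in\Pl(\Theta|{};\foc Y)$, where $\psi_Y\in\Pl(Y|{};\foc Y)$ is the universal morphism, and then transport along the defining bijections of the $\foc$-modalities to land in $\Plin(\foc\Theta;\foc Y)$, with $\foc\Theta=(\foc X_1,\dots,\foc X_m)$. Preservation of identities, composites, and the symmetric action then follows from naturality of these bijections together with the uniqueness of \cref{thm:univ-uniq}, so this defines a symmetric multifunctor $\foc:\Pnonlin\to\Plin$.

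Next comes the reverse direction, which also yields uniqueness. Iterating the $\foc$-bijection moves every nonlinear object out of the nonlinear context, producing a natural isomorphism
\[ \Pl(\Theta|\Gamma;B) \iso \Plin(\foc\Theta,\Gamma;B) \]
for every (necessarily co-unary) linear homset. I would use this formula to rebuild \cP from the triple $(\Pnonlin,\Plin,\foc)$: nonlinear homsets are those of $\Pnonlin$; linear homsets are given by the display; $\circ_A$ and the permutation action on $\Gamma$ come from $\Plin$; and $\circ_X$ against a nonlinear $h$ is $g\mapsto g\circ_{\foc X}\foc h$, using that $\foc$ is a multifunctor. Since every operation is expressed through $\foc$ and the universal bijections, any two \lnl multicategories inducing the same triple must coincide, giving uniqueness. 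For the numbered items: in (1), the reconstruction gives $\Pl(\Theta|{};A)\iso\Plin(\foc\Theta;A)$ and $\Pnl(\Theta;\uoc A)=\Pnonlin(\Theta;\uoc A)$, so a $\uoc$-modality at $A$ is exactly a representation of $\Plin(\foc(-);A)$ natural in the list $\Theta$, and having all of them is precisely a right adjoint to $\foc$ in the $2$-category of symmetric multicategories. In (2), if $\times,1,\ten,\unit$ exist then $\Pnonlin$ is cartesian monoidal and $\Plin$ symmetric monoidal by \cref{thm:cartprod} and the surrounding representability discussion, and the isomorphisms $\foc(X\times Y)\iso\foc X\ten\foc Y$ and $\foc 1\iso\unit$ supplied by \cref{thm:univ-comp} are exactly the coherence data exhibiting the symmetric multifunctor $\foc$ as strong symmetric monoidal. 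In (3), combining these, $\foc$ is a strong symmetric monoidal functor from a cartesian monoidal category to a symmetric monoidal one with a right adjoint $\uoc$; by doctrinal adjunction a strong monoidal left adjoint upgrades the adjunction to a symmetric monoidal one, i.e.\ an \lnl adjunction~\cite{benton:lnl}, and conversely every \lnl adjunction manifestly yields such a triple.

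The easy parts are the forward extraction and items (1)--(3), which are formal once the reconstruction is in hand. The hard part will be verifying that the reconstructed data satisfies all the \lnl multicategory axioms, and in particular the one genuinely delicate point: the cartesian structural action on the nonlinear context $\Theta$ sitting inside a linear homset. I would have to show that this action is correctly induced by the cartesian structure of $\Pnonlin$ transported through $\foc$ --- even though $\foc$ is only a symmetric multifunctor, so weakening and contraction in $\Theta$ are not transported by $\foc$ as such --- and that the transported action agrees with the original under the displayed isomorphism. Once this compatibility is pinned down, the remaining associativity, unitality, and equivariance conditions are routine bookkeeping, and the two constructions are seen to be mutually inverse.
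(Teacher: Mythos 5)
Your proposal is correct and takes essentially the same approach as the paper's proof: the same forward construction (compose with the universal morphism $(Y\mid\,)\to\foc Y$ and apply its universal property), the same reconstruction formula $\Pl(\Theta|\Gamma;B)\cong\Plin(\foc\Theta,\Gamma\mathbin{;}B)$ giving uniqueness, and the same reading of the three numbered items as the hom-set/unit--counit formulation of adjunctions, the previously noted isomorphisms $\foc(X\times Y)\cong\foc X\ten\foc Y$ and $\foc 1\cong\unit$, and doctrinal adjunction. The delicate point you flag --- recovering the contraction and weakening actions on the nonlinear context $\Theta$ from the symmetric multifunctor $\foc$ alone --- is genuinely the crux of the verification, but the paper's own proof is no more detailed there: it simply posits the same hom-set formula and leaves the check of the \lnl axioms implicit, so your attempt is not missing anything the paper supplies.
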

\begin{proof}
  Given the modality $\foc$, we make it a functor by composing with $(Y\mid\,) \to \foc Y$ and applying its universal property:
  \[ \Pnl(X_1,\dots,X_n;Y) \to \Pl(X_1,\dots,X_n|{};\, \foc Y) \toiso \Pl(| \foc X_1,\dots,\foc X_n ; \foc Y). \]
  Conversely, given a functor $\foc$, we define the general linear hom-sets by
  \[\Pl(X_1,\dots,X_n | \Gamma;B) = \Plin(\foc X_1,\dots,\foc X_n,\Gamma\mathbin{;}B).\]
  Thus, the universal property of $\foc$ holds by definition.
  Statement~\ref{item:lnlmulti1} is then a multicategorical version of the standard equivalence between adjunctions defined with bijections of hom-sets and with unit and counit.
  We have already noted~\ref{item:lnlmulti2}, and~\ref{item:lnlmulti3} follows immediately.
\end{proof}

\begin{rem}
Benton~\cite{benton:lnl} assumed $\Pnonlin$ cartesian \emph{closed} and $\Plin$ symmetric monoidal \emph{closed}, but later authors such as~\cite{mellies:catsem-ll} have observed that this is unnecessary for the bare definition.
If both categories are closed we will speak of a \textbf{closed \lnl adjunction}.

Since left adjoints preserve colimits and right adjoints preserve limits, the following structures also form locally full sub-2-categories of \lnlpoly:
\begin{itemize}
\item \lnl adjunctions.
\item \lnl adjunctions with any desired limits and colimits in either category, such that colimits are preserved by the product or tensor product in each variable.
\item Closed \lnl adjunctions, with any desired limits and colimits in either category.
\end{itemize}
\end{rem}

The notion of \lnl adjunction does depend on having both $\ten$ and $\times$, whereas \lnl multicategories can specify the correct behavior of $\foc$ and $\uoc$ even if $\ten,\times$ may not exist.
As evidence for this correctness, we note that $\times,1$ are not necessary for the induced comonad on $\lin\cP$ to coincide with a structure also existing in the literature.

\begin{prop}\label{thm:linexpcmnd}
  If \cP is an \lnl multicategory with $\ten,\unit,\foc,\uoc$, the symmetric monoidal category $\Plin$ admits a \textbf{\emph{linear exponential comonad}} \cite{bbph:terms-ill,hs:glue-orth-ll}, i.e.\ it is a \textbf{\emph{linear category}} in the sense of~\cite{benton:lnl}.
\end{prop}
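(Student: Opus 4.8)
The plan is to produce the linear-exponential-comonad structure entirely from the universal properties already in hand, the point being that the \emph{cartesian} structure of $\Pnonlin$ supplies exactly the comonoid data without any need for the nonlinear products $\times,1$. First I observe that since $\ten,\unit$ exist, $(\ten,\unit)$ is a symmetric monoidal structure on $\Plin$ (as shown via \cref{thm:univ-comp} and \cref{thm:univ-uniq}), and that by \cref{thm:lnl-adj} the modalities assemble into an adjunction $\foc\adj\uoc$ of multicategories, whose induced comonad is $\oc=\foc\uoc$ with counit $\epsilon$ the adjunction counit and comultiplication $\delta=\foc\eta\uoc$ built from the adjunction unit $\eta$. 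The remaining task is to equip $\oc$ with a lax symmetric monoidal structure and each $\oc A$ with a commutative comonoid structure, and to check the compatibility axioms of a linear category \cite{benton:lnl,bbph:terms-ill,hs:glue-orth-ll}.

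Second, I would build the comonoid structure. The cartesian multicategory $\Pnonlin$ carries, for each nonlinear object $X$, a contraction structural map $(X)\to(X,X)$ and a weakening structural map $(X)\to()$, and the cartesian-multicategory axioms say precisely that these satisfy the (co)associativity, (co)unit, and cocommutativity laws of a commutative comonoid (this is the structural-map content underlying \cref{thm:cartprod}). Transporting these along the defining bijection of $\foc$ --- concretely, by composing the universal tensor morphism with two copies of $\psi_X\in\Pl(X|;\foc X)$ and then contracting the two resulting copies of $X$ in the nonlinear context, before descending along the universal property of $\foc$ --- yields $d_{\foc X}\colon\foc X\to\foc X\ten\foc X$ and $e_{\foc X}\colon\foc X\to\unit$ exhibiting each $\foc X$ as a commutative comonoid; here the key point is that $\foc X\ten\foc X$ represents $\Pl(\Theta,X,X|\Gamma;C)$ and $\unit$ represents the empty analogue, so these objects stand in for the possibly-nonexistent $\foc(X\times X)$ and $\foc 1$. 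Specializing to $X=\uoc A$ gives the comonoid $(\oc A,d_A,e_A)$. The lax monoidal structure $m_{A,B}\colon\oc A\ten\oc B\to\oc(A\ten B)$ and $m_I\colon\unit\to\oc\unit$ is obtained the same way: since $\oc A\ten\oc B=\foc\uoc A\ten\foc\uoc B$ represents $\Pl(\Theta,\uoc A,\uoc B|\Gamma;C)$, a map out of it is named by an element of $\Pl(\uoc A,\uoc B|;{\oc(A\ten B)})$, which I assemble from the universal morphisms for $\uoc$ and $\ten$, the adjunction counit, and the unit $\eta$.

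Finally I would verify the axioms: that $d,e$ are natural and make $\oc A$ a commutative comonoid, that $\epsilon_A$ and $\delta_A$ are comonoid morphisms, that $d$ and $e$ are monoidal natural transformations, and that $(\oc,m,\epsilon,\delta)$ is a symmetric monoidal comonad. Each of these reduces, via \cref{thm:univ-uniq} and \cref{thm:univ-comp}, to a corresponding identity among the structural contraction and weakening maps of $\Pnonlin$ together with the naturality of $\eta$ --- for instance $d_A$ is cocommutative because the contraction map is, and $\delta_A$ is a comonoid morphism because $\foc$ intertwines contraction with the representing isomorphisms. I expect the main obstacle to be purely organizational: marshalling this coherence bookkeeping without actual product objects $X\times Y$ to manipulate, working throughout with the representing objects $\foc X\ten\foc Y$ as surrogates. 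Conceptually there is no difficulty, since a symmetric multifunctor out of a cartesian multicategory automatically lands in the commutative comonoids of its representable target --- equivalently, $\foc$ factors through the cartesian monoidal category $\mathrm{cComon}(\Plin)$ of commutative comonoids, whose product is $\ten$ and whose terminal object is $\unit$ --- which is exactly the classical picture of a linear category, with the role of the missing products of $\Pnonlin$ played by this comonoid category.
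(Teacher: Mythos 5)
Your proposal is correct and follows essentially the same route as the paper's proof: define $\oc=\foc\uoc$, obtain the lax monoidal map $\oc A\ten\oc B\to\oc(A\ten B)$ by chaining the universal properties of $\uoc$, $\foc$, and $\ten$, and obtain the comultiplication $\oc A\to\oc A\ten\oc A$ by contracting the duplicated nonlinear object in the context before descending along the universal property of $\foc$ --- exactly the paper's use of a structural map, with your construction merely stated for general $\foc X$ and then specialized to $X=\uoc A$. The closing observation that $\foc$ factors through the commutative comonoids in $\Plin$ is a nice conceptual gloss but does not change the argument.
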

\begin{proof}
  Let $\oc$ be the comonad $\foc\uoc$.
  To give the map $\oc A \ten \oc B \to \oc (A\ten B)$, we act on the $\ten$-universal morphism $(\,\mid A,B) \to A\ten B$ as follows.
  The two noninvertible maps are composition with the $\uoc$-universal morphisms $(\uoc A\mid\,) \to A$ and $(\uoc B\mid\,) \to B$ and with the $\foc$-universal morphism $(\uoc(A\ten B) \mid\, ) \to \foc\uoc(A\ten B)$:
  \begin{alignat*}{2}
    \Pl(|A,B;A\ten B)
    &\to&\;& \Pl(\uoc A, \uoc B | {}; \, A\ten B)\\
    &\toiso&\;& \Pnl(\uoc A, \uoc B;\uoc{(A\ten B)})\\
    &\to&\;& \Pl(\uoc A,\uoc B |{} ;\, \foc\uoc {(A\ten B)})\\
    &\toiso&\;& \Pl(|\foc\uoc A, \foc\uoc B ; \foc\uoc {(A\ten B)})\\
    &\toiso&\;& \Pl(|\foc\uoc A \ten \foc\uoc B ; \foc\uoc {(A\ten B)}).
  \end{alignat*}
  Similarly, to give the map $\oc A \to \oc A \ten \oc A$ we act on the $\ten$-universal morphism $(\oc A, \oc A) \to \oc A \ten \oc A$ as follows.
  The two noninvertible maps are composition with the $\foc$-universal morphism $(\uoc A \mid \,) \to \foc\uoc A = \oc A$ and a structural map.
  \begin{alignat*}{2}
    \Pl(|\oc A, \oc A;\oc A \ten \oc A)
    &=&\;& \Pl(|\foc\uoc A, \foc \uoc A; \oc A \ten \oc A)\\
    &\to&\;& \Pl(\uoc A, \uoc A| {};\, \oc A \ten \oc A)\\
    &\to&\;& \Pl(\uoc A| {};\, \oc A \ten \oc A)\\
    &\toiso&\;& \Pl(|\foc\uoc A; \oc A \ten \oc A).
  \end{alignat*}
  The nullary cases are similar, and the axioms follow by universal properties.
\end{proof}

This implication for \lnl adjunctions was observed in~\cite[\S2.2.1]{benton:lnl}; \lnl multicategories give a way to state and prove it even in the absence of $\times,1$.
Conversely:

\begin{prop}
  The Eilenberg--Moore adjunction of any linear exponential comonad $\oc$ determines an \lnl multicategory with $\times,1,\ten,\unit,\foc,\uoc$, whose underlying linear exponential comonad recovers the given $\oc$.
\end{prop}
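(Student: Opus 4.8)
The plan is to realize the required \lnl multicategory directly from the Eilenberg--Moore coalgebras of $\oc$, and then invoke the equivalence of \cref{thm:lnl-adj}. Write $\cL = \Plin$ for the given symmetric monoidal category, and let $\cL_{\oc}$ be its category of $\oc$-coalgebras, with forgetful functor $U : \cL_{\oc} \to \cL$ and cofree-coalgebra right adjoint $G : \cL \to \cL_{\oc}$. First I would recall the standard fact about linear exponential comonads (see e.g.~\cite{benton:lnl,bbph:terms-ill}): the commutative-comonoid structure that the linear category axioms impose on each $\oc A$ makes every coalgebra canonically a commutative comonoid, with coalgebra maps automatically comonoid maps, and this equips $\cL_{\oc}$ with finite products whose image under $U$ is the monoidal product of $\cL$. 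Concretely, the comonoid counit and comultiplication provide the projections and the diagonal, so that the tensor of two coalgebras is their cartesian product and the tensor unit is terminal. Hence $\cL_{\oc}$ is cartesian monoidal, $U$ is strong symmetric monoidal, and the Eilenberg--Moore adjunction $U \adj G$ is a symmetric monoidal adjunction from a cartesian monoidal category to a symmetric monoidal one.

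This is exactly the data of an \lnl adjunction, so by \cref{thm:lnl-adj}\ref{item:lnlmulti3} it is equivalently an \lnl multicategory \cP equipped with $\times,1,\ten,\unit,\foc,\uoc$, in which $\Plin = \cL$, $\Pnonlin = \cL_{\oc}$, the modality $\foc$ is the forgetful functor $U$, and $\uoc$ is the cofree functor $G$. Since the comonad induced on $\Plin$ is $\foc\uoc = U G$, it agrees as a bare comonad with the comonad $\oc$ we began with, including its comultiplication and counit.

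What remains, and what I expect to be the main obstacle, is to check that the \emph{full} linear exponential comonad structure produced on $\oc$ by the construction of \cref{thm:linexpcmnd} coincides with the structure originally given, so that we genuinely ``recover the given $\oc$'' rather than merely an isomorphic comonad with possibly different coherence data. For this I would unfold the maps $\oc A \ten \oc B \to \oc(A\ten B)$, $\oc \unit \to \unit$, $\oc A \to \oc A \ten \oc A$ and $\oc A \to \unit$ built in \cref{thm:linexpcmnd}. Each is assembled from $\foc$- and $\uoc$-universal morphisms, which under the present identifications are the unit and counit of the Eilenberg--Moore adjunction together with the cofree-coalgebra structure maps; tracing them through the adjunction bijections should identify the monoidality maps with the strong-monoidal structure of $U$ and the comonoid maps with the comonoid structure of the coalgebras $\oc A$, i.e.\ with the originally given linear exponential structure. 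This is a routine but somewhat lengthy diagram chase; once the generating maps are matched, the linear category axioms need not be re-verified, as they were imposed on the given structure to begin with.
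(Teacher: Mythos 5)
Your proposal is correct and takes essentially the same route as the paper: the paper's proof is a one-liner citing \cite[\S2.2.2]{benton:lnl} and \cite[\S7]{mellies:catsem-ll} for precisely the fact you re-derive (coalgebras of a linear exponential comonad are commutative comonoids, making the Eilenberg--Moore category cartesian and the adjunction an \lnl adjunction), and then invokes \cref{thm:lnl-adj} exactly as you do. Your final paragraph about matching the induced linear exponential structure with the given one is a verification the paper leaves implicit in the citations, so it is a welcome but not divergent addition.
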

\begin{proof}
  Such an Eilenberg--Moore adjunction is an \lnl adjunction (see~\cite[\S2.2.2]{benton:lnl} and \cite[\S7]{mellies:catsem-ll}), hence an \lnl multicategory with $\times,1,\ten,\unit,\foc,\uoc$.
\end{proof}

Moreover, since \emph{any subset of objects of a multicategory determines a sub-multi\-category} (in stark contrast to the situation for monoidal categories), we still obtain an \lnl multicategory with $\ten,\unit,\foc,\uoc$ if we restrict to any subset of the $\oc$-coalgebras containing the cofree ones.
The smallest choice, of course, consists of exactly the cofree coalgebras, so we have:

\begin{cor}\label{thm:klesli}
  The Kleisli adjunction of any linear exponential comonad $\oc$ determines an \lnl multicategory with $\ten,\unit,\foc,\uoc$, whose underlying linear exponential comonad recovers the given $\oc$.\qed
\end{cor}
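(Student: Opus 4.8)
The plan is to deduce this from the preceding proposition by cutting down the nonlinear objects. First I would apply that proposition to the Eilenberg--Moore adjunction of $\oc$, obtaining an \lnl multicategory $\cP$ with $\times,1,\ten,\unit,\foc,\uoc$ in which $\Pnonlin$ is the cartesian multicategory underlying the cartesian monoidal category of $\oc$-coalgebras, $\Plin$ is the symmetric multicategory underlying the symmetric monoidal category carrying $\oc$, the modalities $\foc$ and $\uoc$ are the forgetful and cofree functors, and---by \cref{thm:linexpcmnd}---the linear exponential comonad induced on $\Plin$ is the given $\oc$.

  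Next, recall that the Kleisli category of $\oc$ is the full subcategory of the category of $\oc$-coalgebras spanned by the \emph{cofree} coalgebras. I would then invoke the folklore fact recorded above---restricting an \lnl polycategory to any subset of its nonlinear objects yields a sub-\lnl-polycategory, and hence a sub-\lnl-multicategory since co-unarity of the linear morphisms is inherited---applied to the subset consisting of the cofree $\oc$-coalgebras. Write $\cE$ for the resulting sub-\lnl-multicategory. Restricting nonlinear objects affects neither the linear objects nor the linear morphisms, so $\lin\cE$ coincides with $\Plin$, while $\nonlin\cE$ is the cartesian multicategory on the cofree $\oc$-coalgebras, whose underlying ordinary category is the Kleisli category of $\oc$. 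Note that $\times$ and $1$ need not survive this restriction, since a product of cofree coalgebras is in general not cofree; this is exactly why the statement claims only $\ten,\unit,\foc,\uoc$.

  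It remains to verify that $\ten,\unit,\foc,\uoc$ do survive and that the induced linear exponential comonad is unchanged. The $\ten$- and $\unit$-universal morphisms of $\cP$ involve only linear objects, which are untouched, so they remain universal in $\cE$. For a cofree coalgebra $X = \uoc A$ we have $\foc X = \foc\uoc A$, a linear object, hence retained, and $\uoc A$ is by definition cofree, hence still a nonlinear object of $\cE$; the characterizing bijections
  \begin{align*}
    \El(\Theta|\Gamma,\foc X;B) &\cong \El(\Theta,X|\Gamma;B), &
    \Enl(\Theta;\uoc A) &\cong \El(\Theta|{};\,A)
  \end{align*}
  are just the restrictions, to contexts $\Theta$ consisting of cofree coalgebras, of the corresponding bijections in $\cP$, and hence still hold, so $\foc$ and $\uoc$ exist in $\cE$. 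Finally, the recipe of \cref{thm:linexpcmnd} extracting a linear exponential comonad from an \lnl multicategory with $\ten,\unit,\foc,\uoc$ refers only to $\ten$, $\unit$, $\foc$ applied to objects of the form $\uoc(-)$, and $\uoc$ applied to linear objects built from $\ten$ and $\unit$; every object it uses is retained, so it returns in $\cE$ exactly the same data as in $\cP$, namely the original linear exponential comonad $\oc$. The only thing to watch along the way is the bookkeeping that each universal morphism invoked has its domain and codomain objects among those retained, which is immediate here, so I do not expect any serious obstacle.
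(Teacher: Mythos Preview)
Your proof is correct and follows essentially the same approach as the paper: start from the Eilenberg--Moore \lnl multicategory, then restrict the nonlinear objects to the cofree coalgebras using the fact that any subset of objects of a multicategory determines a full sub-multicategory. The paper states this in a single sentence immediately before the corollary and then marks it with \qed; your version simply fills in the verification that $\ten,\unit,\foc,\uoc$ survive the restriction and that the induced comonad is unchanged, which is exactly the routine check the paper leaves implicit.
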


\begin{rem}\label{rmk:seely}
To include the Kleisli adjunction in the case when both categories are required to be monoidal, one has to assume that cofree coalgebras are closed under products.
This follows for instance if the original monoidal category has products~\cite[\S2.2.3]{benton:lnl}, in which case we recover the notion of \textbf{Seely comonad}, characterized by $\oc A \ten \oc B \cong \oc (A\with B)$.
But \lnl polycategories allow us to include the Kleisli case even when $\with$ doesn't exist.

There are also intermediate choices between the Eilenberg--Moore category (all coalgebras) and Kleisli category (cofree coalgebras), such as the category of finite products of cofree coalgebras (if \cL has finite products), or category of exponentiable coalgebras (if \cL is closed monoidal), as discussed in~\cite[\S2.2.2]{benton:lnl}.
\end{rem}

Here is another situation that \lnl polycategories allow us to treat more generally.

\begin{exa}
  Let \cE be a symmetric multicategory; we can enhance it to an \lnl multicategory with $\foc$ by taking the nonlinear objects to be the commutative comonoids in \cE.
  It may not be immediately obvious how to define a comonoid in a multicategory that lacks $\ten$, but it is possible: $C$ is a comonoid when it is equipped with operations
  \begin{align*}
    \nonlinhom\cE(\Theta_1,C,C,\Theta_2;B) &\to \nonlinhom\cE(\Theta_1,C,\Theta_2;B)\\
    \nonlinhom\cE(\Theta_1,\Theta_2;B) &\to \nonlinhom\cE(\Theta_1,C,\Theta_2;B)
  \end{align*}
  that are associative, unital, and appropriately natural and equivariant.
  Such cocommutative comonoids form a cartesian multicategory with a forgetful multicategory functor to \cE, so by \cref{thm:lnl-adj} it yields an \lnl multicategory.

  If \cE is symmetric monoidal, then cocommutative comonoids form a cartesian monoidal category, so this \lnl multicategory has $\times,1,\ten,\unit,\foc$.
  Thus, if $\foc$ has a right adjoint $\uoc$, i.e.\ if cofree cocommutative comonoids exist, then it is an \lnl adjunction, known as a \textbf{Lafont category}~\cite{lafont:thesis} or a \textbf{free exponential modality}~\cite{mtt:free-exp}.
  But we get an \lnl multicategory even without these assumptions.
\end{exa}

In general, given a category with a linear exponential comonad, we prefer to regard it as an \lnl multicategory via the Kleisli construction rather than the Eilenberg--Moore construction.
The reason for this is the following folklore observation, showing that Kleisli adjunctions can be detected by a purely intrinsic condition:

\begin{lem}\label{thm:kleisli-eso}
  An adjunction $F : \cA \toot \cB : G$ is equivalent to the Kleisli adjunction of the monad $GF$ if and only if its left adjoint $F$ is essentially surjective on objects, and isomorphic to that Kleisli adjunction if and only if $F$ is bijective on objects.
\end{lem}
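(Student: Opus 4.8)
The plan is to route everything through the canonical \emph{comparison functor} from the Kleisli category and to exploit the fact that this functor is automatically fully faithful, so that the only degree of freedom is its effect on objects. Write $T = GF$ for the induced monad on $\cA$, let $\eta$ be its unit and $\epsilon$ the counit of the adjunction $F\dashv G$, and let $F_T \dashv G_T : \cA_T \to \cA$ be the Kleisli adjunction, where $\cA_T$ has the objects of $\cA$ and $\cA_T(A,B) = \cA(A,TB)$. First I would recall the comparison functor $K : \cA_T \to \cB$: it sends an object $A$ to $FA$, and a Kleisli map $f \in \cA(A,TB) = \cA(A,GFB)$ to its adjoint transpose $\epsilon_{FB}\circ Ff : FA \to FB$. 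A short computation with the triangle identities shows $K$ is a functor with $K F_T = F$ and $G_T = G K$, carrying the unit and counit of $F_T\dashv G_T$ to those of $F\dashv G$; moreover $K$ is the \emph{unique} such functor, which is the usual universal property of $\cA_T$ as the initial resolution of $T$. This uniqueness is what converts any hypothetical equivalence or isomorphism of adjunctions into a statement about $K$ itself.

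Next I would observe that $K$ is \emph{always} fully faithful, since its action on hom-sets $\cA_T(A,B) = \cA(A,GFB) \to \cB(FA,FB) = \cB(KA,KB)$ is precisely the adjunction bijection. Combining this with the previous paragraph: the adjunction $F\dashv G$ is isomorphic (resp.\ equivalent) to the Kleisli adjunction if and only if $K$ is an isomorphism (resp.\ an equivalence) of categories, if and only if $K$ is bijective (resp.\ essentially surjective) on objects. Since $F_T$ is the identity on objects, $K$ agrees on objects with $A \mapsto FA$, so $K$ is bijective on objects iff $F$ is, and $K$ is essentially surjective iff $F$ is --- which is exactly the claimed criterion. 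For the ``only if'' directions: given any equivalence (resp.\ isomorphism) of adjunctions $E : \cA_T \to \cB$, uniqueness forces $E \cong K$ (resp.\ $E = K$), so $F \cong E F_T$ inherits essential surjectivity (resp.\ bijectivity) on objects from $E$, using again that $F_T$ is identity-on-objects.

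The part I expect to take the most care over is not mathematical depth but precision of statement: pinning down the intended meaning of ``equivalent to'' and ``isomorphic to'' the Kleisli adjunction, namely as an equivalence (resp.\ isomorphism) of categories $\cA_T \to \cB$ that commutes with the two functors up to coherent isomorphism (resp.\ on the nose) and respects units and counits. Once that convention is fixed, every step above is a routine diagram chase built from the triangle identities and the definition of Kleisli composition, and there is no genuine obstacle.
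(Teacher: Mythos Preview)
Your proposal is correct and follows essentially the same route as the paper: both arguments introduce the comparison functor $K:\cA_T\to\cB$, observe that it is always fully faithful via the adjunction bijection $\cA(A,GFB)\cong\cB(FA,FB)$, and note that since $K$ agrees with $F$ on objects, the only remaining condition for $K$ to be an equivalence (resp.\ isomorphism) is that $F$ be essentially surjective (resp.\ bijective) on objects. Your treatment is in fact more careful than the paper's, which simply declares the ``only if'' direction clear; your use of the initiality of the Kleisli resolution to force any equivalence of adjunctions to be isomorphic to $K$ is a clean way to make that direction precise.
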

\begin{proof}
  The ``only if'' direction is clear, so suppose $F$ is essentially surjective on objects, and let $F_T : \cA \toot \cA_T : G_T$ be the Kleisli adjunction of the monad $T=GF$.
  Thus the objects of $\cA_T$ are formal copies ``$A_T$'' of the objects $A\in \cA$, with $\cA_T(A_T,B_T) = \cA(A,T B)$.
  There is a unique comparison functor $H:\cA_T \to \cB$  defined by $H(A_T) = F A$, which is essentially surjective on objects since $F$ is (and bijective on objects if $F$ is).
  But it is also fully faithful, since $\cB(F A, F B) \cong \cA(A, G F B) = \cA(A, T B) = \cA_T(A_T,B_T)$; hence it is an equivalence.
\end{proof}

Thus, applying the Kleisli construction, we have the following locally full sub-2-cat\-e\-gories of \lnlpoly:
\begin{itemize}
\item Symmetric monoidal categories with linear exponential comonad.
  This includes Seely comonads (if the category has finite products) and Lafont comonads (if cofree cocommutative comonoids exist).
\item Symmetric monoidal categories with linear exponential comonad and any desired limits and any desired colimits preserved by the tensor product in each variable.
\item Closed symmetric monoidal categories with linear exponential comonad and any desired limits and colimits.
\end{itemize}
In each case the ``strong'' morphisms, corresponding to functors of \lnl multicategories that preserve (among other things) the exponential modalities $\foc,\uoc$, are those that preserve the comonad up to coherent isomorphism: $F(\oc A) \cong \oc (F A)$.

Note that all of these \lnl polycategories have the following property.

\begin{defi}\label{defn:kleisli-type}
  An \lnl polycategory is of \textbf{Kleisli type} if it is equipped with a choice of $\uoc$ that is bijective on objects.
\end{defi}

\lnl multicategories of Kleisli type correspond to syntaxes for intuitionistic linear logic that have only one class of type, such as~\cite{barber:dill,hasegawa:cll-imp}, rather than two syntactic classes for ``linear types'' and ``nonlinear types''.

\begin{exa}\label{egs:kleisli-type}
  We conjecture that the \textbf{Linear Non-Linear multicategories} suggested by~\cite{ht:lnl-2mnd} are equivalent to \lnl multicategories of Kleisli type.
  In addition, the \textbf{IL-indexed categories} of~\cite{mpr:cm-iltt} are equivalent to \lnl multicategories of Kleisli type having $\ten,\unit,\with,\top,\hom$, and $\mixedhom$ (our $\mixedhom$ being written ``$\to$'').
\end{exa}

We can also attempt to induce an \lnl multicategory from a \emph{monad} on a \emph{cartesian} monoidal category or multicategory.
In fact this is quite easy: the 2-category of symmetric multicategories has Eilenberg--Moore objects, so any monad $T$ therein on a multicategory \cE induces an adjunction of multicategories $\cE \toot \cE^T$.
If \cE is cartesian, by \cref{thm:lnl-adj} this yields an \lnl multicategory with $\foc,\uoc$.
The interesting thing is that if \cE is representable, hence a (cartesian) monoidal category, then a symmetric-multicategory-monad on it is the same as a lax symmetric monoidal monad, and hence by~\cite{kock:strfunc-monmnd} the same as a \emph{commutative strong monad}.

\begin{prop}
  Any commutative strong monad $T$ on a cartesian monoidal category \cE induces an \lnl multicategory \cP having $\foc,\uoc,\times,1,\unit$, where $\Pnonlin =\cE$ and the $\Plin$ is the symmetric multicategory of $T$-algebras.
  Moreover:
  \begin{enumerate}
  \item If \cE is cartesian closed with equalizers, then \cP has $\to,\hom$.\label{item:cm1}
  \item If \cE and $T$ are such that the category of $T$-algebras has coequalizers (e.g.\ \cE is locally presentable and $T$ is accessible, or \cE is cartesian closed with reflexive coequalizers preserved by $T$) then \cP also has $\ten$, and thus is an \lnl adjunction.\label{item:cm2}
  \end{enumerate}
\end{prop}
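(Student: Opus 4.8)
The plan is to realize $\cP$ through the Eilenberg--Moore adjunction for $T$ in the 2-category of symmetric multicategories, reading off every universal property from \cref{thm:lnl-adj} and the structure theory already developed above. Concretely, I would first invoke the correspondence recalled just above (Kock's theorem) that identifies the commutative strong monad $T$ with a monad on the symmetric multicategory underlying $\cE$; since $\cE$ is cartesian monoidal, that underlying multicategory is cartesian. Its Eilenberg--Moore object is the symmetric multicategory $\cE^T$ of $T$-algebras and multilinear maps, equipped with the free--forgetful adjunction $F : \cE \toot \cE^T : U$. As $F$ is a functor from a cartesian multicategory to a symmetric one, \cref{thm:lnl-adj} produces an \lnl multicategory $\cP$ with $\Pnonlin = \cE$, $\Plin = \cE^T$, and $\foc = F$; and because $F$ has the right adjoint $U$, the modality $\uoc = U$ exists as well. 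The products $\times, 1$ are present since $\Pnonlin = \cE$ is cartesian monoidal, and for the linear unit I would build nothing new: with $\foc$ and $1$ both available, the relation $\unit = \foc 1$ from \cref{thm:univ-comp} supplies $\unit$ directly (concretely $\foc 1 = (T1, \mu)$, the free algebra on the terminal object, is the unit of $\cE^T$).

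For item~(i), the nonlinear hom $\to$ lives entirely inside $\Pnonlin = \cE$, so it is just the cartesian closed internal hom of $\cE$; its universal property $\Pnl(\Theta, X; Y) \iso \Pnl(\Theta; X \to Y)$ is precisely cartesian closedness. For the linear hom I would invoke the standard closed structure on the algebras of a commutative monad. Starting from the internal hom $[\abs A, \abs B]$ of the carriers (available since $\cE$ is cartesian closed), I carve out the subobject of algebra-respecting maps as the equalizer in $\cE$ of the two evident arrows $[\abs A, \abs B] \rightrightarrows [T\abs A, \abs B]$ comparing precomposition with the structure map of $A$ to the $T$-action followed by the structure map of $B$. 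This equalizer exists because $\cE$ has equalizers, inherits a pointwise $T$-algebra structure from $B$ through the strength, and represents linear maps, giving $\Pl(\Theta|\Gamma,A;B) \iso \Pl(\Theta|\Gamma;A\hom B)$; this is the object I would call $A \hom B$.

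For item~(ii), I would construct the linear tensor $A \ten B$ as the usual tensor product of algebras of a commutative monad: the reflexive coequalizer in $\cE^T$ presenting the free algebra on $\abs A \times \abs B$ modulo the bilinearity relations generated by the two structure maps and the strength. Such a (reflexive) coequalizer exists under either stated hypothesis --- local presentability of $\cE$ with $T$ accessible makes $\cE^T$ locally presentable, while cartesian closedness of $\cE$ with reflexive coequalizers preserved by $T$ makes $U$ create them in $\cE^T$ --- and it represents bilinear maps, so $\Pl(\Theta|\Gamma,A,B;C) \iso \Pl(\Theta|\Gamma,A\ten B;C)$. Once $\ten, \unit, \times, 1, \foc, \uoc$ are all in hand, \cref{thm:lnl-adj} identifies $\cP$ with an \lnl adjunction, as claimed.

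I expect the principal obstacle to be checking that the equalizer and coequalizer above really carry the asserted $T$-algebra structures and possess the full \emph{multicategorical} universal properties --- that is, with arbitrary parameter contexts $\Theta, \Gamma$ rather than merely at the level of ordinary maps of algebras --- since this is exactly where the commutativity and strength of $T$ must be deployed. The companion point requiring care is the identification of the Eilenberg--Moore object in symmetric multicategories with the multicategory of multilinear maps; both are, however, standard facts about commutative monads that can largely be cited rather than reproved.
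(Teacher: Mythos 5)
Your proposal follows exactly the paper's route: the first statement via the Eilenberg--Moore object of the monad in symmetric multicategories combined with \cref{thm:lnl-adj} (noting $\unit = \foc 1 = T1$), and items~(i) and~(ii) via the standard closed structure (equalizers) and tensor product of algebras (reflexive coequalizers) for a commutative monad, which the paper simply cites from Kock and Seal rather than sketching. The only difference is expository --- you spell out the content of those citations, and your closing caveat about the full multicategorical universal property is handled by the definition $\Pl(\Theta|\Gamma;B) = \Plin(\foc\Theta,\Gamma;B)$ from \cref{thm:lnl-adj}, which reduces everything to the cited results in $\Plin$.
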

\begin{proof}
  We have already observed the first statement, except for noting that $\unit = T1$.
  Statements~\ref{item:cm1} and~\ref{item:cm2} follow by results in the literature~\cite{kock:cc-commnd,seal:tens-mnd-act}.
\end{proof}

Of course, we can also restrict to any full sub-multicategory of the Eilenberg--Moore category, such as the Kleisli category, and still have an \lnl multicategory.
As in the comonad case, when given a commutative strong monad on a cartesian monoidal category we generally regard it as an \lnl multicategory via the Kleisli construction; thus we have the following locally full sub-2-categories of \lnlpoly:
\begin{itemize}
\item Cartesian monoidal categories with a commutative strong monad.
\item Cartesian monoidal categories with a commutative strong monad and any desired limits and any desired colimits preserved by the product in each variable.
\item Cartesian closed categories with a commutative strong monad and any desired limits and colimits.
\end{itemize}

A \emph{non-commutative} monad $T$ on a cartesian monoidal category \cE does not induce a multicategory structure on its Eilenberg--Moore category $\cE^T$.
However, as long as $T$ is a \emph{strong} monad, we can still combine \cE with $\cE^T$ to produce an \lnl multicategory, albeit a rather degenerate one.
Specifically, if $A$ and $B$ are $T$-algebras and $X$ is an object of \cE, we can define an \textbf{$X$-indexed family of algebra maps $A\to B$} to be a morphism $f:X\times A \to B$ such that the following diagram commutes:
\[
  \begin{tikzcd}
    X\times T A \ar[r] \ar[d] & T(X\times A) \ar[r,"T f"] & T B \ar[d] \\
    X \times A \ar[rr,"f"'] && B
  \end{tikzcd}
\]
in which the map $X\times T A \to T(X\times A)$ is the monad strength.

\begin{prop}
  Any strong monad $T$ on a cartesian monoidal category \cE induces an \lnl multicategory \cP with $\Pnonlin = \cE$, whose linear objects are the $T$-algebras, with
  \begin{align*}
    \Pl(\Theta| ; \, A) &= \cE(\Theta; A) \\
    \Pl(\Theta| A;B) &= \big\{ (\bigtimes\Theta)\text{-indexed families of algebra maps } A\to B \big\}
  \end{align*}
  and all other linear homsets empty.\qed
\end{prop}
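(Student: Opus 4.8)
The plan is to write down all of the required data directly and then check the \lnl-polycategory axioms, essentially all of which reduce to facts about $\cE$; the one point with content is a diagram chase establishing that the property ``is a $(\bigtimes\Theta)$-indexed family of algebra maps'' is preserved by composition. Observe first that every nonempty linear hom-set in the asserted structure has linear codomain of length exactly one (indeed linear domain of length at most one as well), so the co-unarity condition built into the notion of \lnl multicategory is automatic, and it suffices to exhibit an \lnl polycategory. For the nonlinear part I take $\Pnl(\Theta;X) := \cE(\bigtimes\Theta;X)$, with the structural maps, the composition $\circ_X$ of nonlinear morphisms, and the identities $1_X$ all being those of the standard cartesian multicategory underlying the cartesian monoidal category $\cE$; this is the content of ``$\Pnonlin = \cE$''.

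For the linear part, write $a:TA\to A$ for the structure map of an algebra $A$, write $t_{W,A}:W\times TA\to T(W\times A)$ for the strength, and abbreviate $X=\bigtimes\Theta$, $Y=\bigtimes\Theta'$. Then $\Pl(\Theta|A;B)$ is the set of $f:X\times A\to B$ with $b\circ Tf\circ t_{X,A}=f\circ(\id_X\times a)$, and $\Pl(\Theta|{};A):=\cE(X;A)$. The identity $1_A\in\Pl(|A;A)$ is the unitor $1\times A\cong A$, which satisfies the condition because the unit axiom of the strength gives $t_{1,A}$ followed by $T(\text{unitor})$ equal to the unitor; and $1_X=\id_X$. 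A structural map $\si:\Theta'\to\Theta$ acts on $\Pl(\Theta|A;B)$ by precomposition with $\si^\ast\times\id_A$, where $\si^\ast:Y\to X$ is induced from the diagonals and projections of $\cE$; this preserves the defining condition by naturality of $t$ in its first variable, and the action is functorial by functoriality of $\bigtimes$. The structural permutations of $\Gamma$ and $\Delta$ act trivially, these lists having length at most one.

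The composites of linear morphisms that can be nonempty are of three shapes: (i) $\circ_X$, plugging a nonlinear morphism into the nonlinear context of a linear one, which is again precomposition in $\cE$ (defined when $\Theta_2$ is empty, and extended by equivariance); (ii) $\circ_A$ of $f\in\Pl(\Theta|A;B)$ with $g_0\in\Pl(\Theta'|{};A)$, landing in $\Pl(\Theta,\Theta'|{};B)=\cE(X\times Y;B)$ and defined as $f\circ(\id_X\times g_0)$, where there is nothing to check since the linear codomain is empty; and (iii) $\circ_A$ of $f\in\Pl(\Theta|A;B)$ with $g\in\Pl(\Theta'|C;A)$, landing in $\Pl(\Theta,\Theta'|C;B)$ and defined as $f\circ(\id_X\times g)$ after the evident reassociation. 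The only thing to verify is that the composite in (iii) is again a family, which is the chase (suppressing coherence isomorphisms of $\times$)
\begin{align*}
  b\circ T\bigl(f\circ(\id_X\times g)\bigr)\circ t_{X\times Y,C}
  &= b\circ Tf\circ t_{X,A}\circ\bigl(\id_X\times(Tg\circ t_{Y,C})\bigr)\\
  &= f\circ(\id_X\times a)\circ\bigl(\id_X\times(Tg\circ t_{Y,C})\bigr)\\
  &= f\circ\bigl(\id_X\times(g\circ(\id_Y\times c))\bigr)
     = \bigl(f\circ(\id_X\times g)\bigr)\circ(\id_{X\times Y}\times c),
\end{align*}
in which the first equality uses the associativity axiom of the strength (to re-express $t_{X\times Y,C}$) together with naturality of $t$ in its second variable, the second that $f$ is a family, and the third that $g$ is; the family case of (i) is the same chase with one variable relabelled. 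It remains to verify associativity, unitality and equivariance of the composites: on the plain-$\cE$-morphism portions of the hom-sets these are exactly the cartesian multicategory axioms for $\cE$, while on the ``family'' portions each required identity unwinds, via the above definitions, to an equation between $\cE$-morphisms assembled from $f,g,h$ and coherence isomorphisms of $(\times,1)$, which holds by functoriality and associativity of the cartesian product and needs no further monad axioms. The main obstacle is therefore precisely the closure-under-composition chase above (together with its structural-action variant), which is where the associativity and unit axioms of the strength are used; everything else is bookkeeping.
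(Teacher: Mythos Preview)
Your proof is correct. The paper actually gives no proof of this proposition at all: it is stated with a trailing \qedsymbol, indicating the verification is left to the reader. Your write-up supplies exactly the kind of routine verification the authors are implicitly claiming, and the one substantive point you identify --- closure of ``$X$-indexed families of algebra maps'' under composition, which is where the associativity axiom of the strength enters --- is precisely the nontrivial ingredient. (A minor remark: in the identity check you also implicitly use naturality of the unitor to pass from $a\circ\text{unitor}$ to $\text{unitor}\circ(\id_1\times a)$, but this is standard.)
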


(Here by $\bigtimes\Theta$ we mean the cartesian product of all the objects in $\Theta$, or the terminal object if $\Theta$ is empty.)

This \lnl multicategory is \textbf{linearly subunary}, i.e.\ all its linear morphisms have linear codomain of length 1 (since it is an \lnl multicategory) and linear domain of length $\le 1$.
It has $\times,1,\uoc$, and also an $\foc$ with a weaker universal property:
\begin{equation}
  \Pl(\Theta,X|;\,B) \cong \Pl(\Theta|\foc X;B).\label{eq:cbpv-foc}
\end{equation}
This is similar to the restriction on $\top,0$ in multicategories from \cref{sec:lnl-polycategories}.
It implies there is a $\unit$ (namely $\foc 1$) with a similarly restricted universal property.
Conversely, from $\rtimes$ and a restricted $\unit$, we can construct a restricted $\foc$ as $\foc X = X \rtimes \unit$.

These \lnl multicategories provide semantics for ``call-by-push-value''~\cite{levy:adj-cbpv} and related theories.
In this case, they are usually described as \emph{enriched adjunctions}, analogously to the definition of \lnl adjunctions as \emph{monoidal} adjunctions.
To explain this, recall that if \cE is cartesian monoidal, its Yoneda embedding $\cE \into [\cE\op,\fSet]$ is fully faithful and preserves products; thus any \cE-enriched category can be regarded as an $[\cE\op,\fSet]$-enriched one.
In addition, \cE itself is always $[\cE\op,\fSet]$-enriched, with hom-presheaves $\underline{\cE}(A,B)(X) = \cE(X\times A,B)$.

\begin{prop}\label{rmk:cbpv}
  A linearly subunary \lnl multicategory with $\times,1$ is uniquely determined by a \textbf{CBPV pre-structure}~\cite{levy:adj-cbpv}: a cartesian monoidal category \cE, a category \cL enriched over $[\cE\op,\fSet]$, and an $[\cE\op,\fSet]$-enriched functor $R:\cL \to [\cE\op,\fSet]$.
  Moreover:
  \begin{enumerate}
  \item The modality \uoc exists if and only if $R$ lands inside \cE.
  \item If \uoc exists, then \foc exists with restricted universal property~\eqref{eq:cbpv-foc} if and only if $R:\cL \to \cE$ has an $[\cE\op,\fSet]$-enriched left adjoint.
  \item The hom-objects of \cL lie in \cE if and only if $\eechom$ exists.
  \item \cL has $[\cE\op,\fSet]$-enriched powers by representables if and only if $\mixedhom$ exists.
  \item \cL has $[\cE\op,\fSet]$-enriched copowers by representables if and only if $\rtimes$ exists.
  \item \cL has $[\cE\op,\fSet]$-enriched finite products if and only if $\with,\top$ exist with a restricted universal property respecting the arity restrictions.
  \item \cE is distributive~\cite{clw:ext-dist} and the hom-presheaves of \cL preserve finite coproducts if and only if $+,\varnothing$ exist with a restricted universal property.
  \end{enumerate}
\end{prop}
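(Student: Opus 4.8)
The plan is to give an explicit translation in each direction and check that the two are mutually inverse, and then to obtain each of the six clauses by unwinding the defining bijection of the relevant universal morphism and applying the Yoneda lemma. The reason a presheaf-enriched category $\cL$ together with a functor $R$ must appear here --- rather than a plain functor $\Pnonlin\to\Plin$ of symmetric multicategories as in \cref{thm:lnl-adj} --- is exactly that in a linearly subunary \lnl multicategory $\Plin$ is no longer a symmetric multicategory.

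From a linearly subunary \lnl multicategory \cP with $\times,1$, set $\cE:=\Pnonlin$; this is a cartesian monoidal category since $\times,1$ exist (\cref{thm:cartprod}), with $\Pnl(\Theta;X)\cong\cE(\prod\Theta,X)$, a structural map of lists corresponding to the evident morphism of products and a general nonlinear morphism to a general morphism out of $\prod\Theta$. Let $\cL$ have the linear objects of \cP as objects and $\underline{\cL}(A,B)(X):=\Pl(X|A;B)$ as hom-presheaf on $\cE$ (presheaf structure via $\circ_X$); the linear composition $\circ_B$ composed with the diagonal of $\cE$, together with the $1_A\in\Pl(|A;A)$, makes $\cL$ a category enriched over $([\cE\op,\fSet],\times,1)$. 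Set $RA(X):=\Pl(X|;A)$, again a presheaf via $\circ_X$; the composite $\Pl(X|A;B)\times\Pl(Y|;A)\to\Pl(X,Y|;B)\cong\Pl(X\times Y|;B)$, reorganized along the diagonal, makes $R$ an $[\cE\op,\fSet]$-enriched functor. Conversely, from $(\cE,\cL,R)$ reconstruct \cP with nonlinear part the cartesian multicategory underlying $\cE$, linear objects those of $\cL$, $\Pl(\Theta|;A):=RA(\prod\Theta)$ and $\Pl(\Theta|A;B):=\underline{\cL}(A,B)(\prod\Theta)$, all other linear homsets empty; compositions $\circ_X$ from the presheaf actions, $\circ_B$ from enriched composition in $\cL$, and $\circ_A$ from $R$ being an enriched functor (evaluate the induced map of presheaves at a product and restrict along the two projections). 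These two passages are visibly mutually inverse up to isomorphism, which gives the ``uniquely determined by''.

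For the clauses, in each case I unwind which bijections the definition of ``universal in $R$'' actually requires: since every linear morphism here is co-unary and linearly subunary, the only linear contexts that occur have length $\le1$, i.e.\ $\emptyset$ (contributing $R$) or a single object $C$ (contributing $\underline{\cL}(C,-)$), and then Yoneda does the rest. So $\uoc A$ represents $RA$, whence $\uoc$ exists iff each $RA$ is representable, i.e.\ $R$ lands in $\cE$ (clause (i)); $A\eechom B$ represents $\underline{\cL}(A,B)$, whence $\eechom$ exists iff all hom-presheaves of $\cL$ are representable, i.e.\ $\cL$ underlies an $\cE$-enriched category via $\cE\hookrightarrow[\cE\op,\fSet]$ (clause (iii)); and when $\uoc$ exists, so $RB\cong\cE(-,\uoc B)$, the bijection \eqref{eq:cbpv-foc} reads $\underline{\cL}(\foc X,B)(P)\cong\cE(P\times X,\uoc B)\cong\underline{[\cE\op,\fSet]}(yX,RB)(P)$, exactly saying $\foc$ is an $[\cE\op,\fSet]$-enriched left adjoint to $R$ (clause (ii)). Clauses (iv)--(vi) are the same move applied to powers/copowers by representables, enriched finite products, and coproducts. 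The only care needed is that when the object being characterized sits in a codomain (as for $\mixedhom$, $\with$, $\top$, $+$, $\varnothing$), the universal property also touches the $\Gamma=\emptyset$ homsets and so imposes a compatibility with $R$ (preservation of the relevant power/product, or sending coproducts to products) on top of the condition on $\cL$, whereas when the object sits only in domains (as for $\rtimes$, $\eechom$, $\uoc$) there is no such extra $R$-condition; distributivity of $\cE$ in (vi) is read off from the $\Pnl$-bijection via $P\times(X+Y)\cong P\times X+P\times Y$. When $\foc$ is present, $R\cong\underline{\cL}(\unit,-)$ with $\unit=\foc 1$, which makes the $R$-side conditions automatic, and in general they are among the data a CBPV pre-structure carries.

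The main obstacle is not any one clause --- each is a short computation once the bookkeeping is in place --- but (a) faithfully recording the subunarity/co-unarity constraints as ``only homsets of these arities occur'', which is what forces honest $[\cE\op,\fSet]$-enrichment rather than plain $\cE$-enrichment, and (b) the lengthy but routine verification that the reconstructed \cP satisfies all the \lnl multicategory axioms. I would carry out (b) in stages: first the purely nonlinear axioms (those of the cartesian multicategory underlying $\cE$); then the axioms internal to a single $\circ_A$ or $\circ_B$ (naturality of the presheaf actions and associativity of enriched composition in $\cL$); and finally the mixed associativities relating $\circ_A$, $\circ_B$, and $\circ_X$, each of which reduces to naturality together with the coherence already packaged into ``$R$ is an $[\cE\op,\fSet]$-enriched functor''.
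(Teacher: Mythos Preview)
Your approach is essentially the same as the paper's: identify $\cE=\Pnonlin$, package $\Pl(X|A;B)$ as the hom-presheaves of a $[\cE\op,\fSet]$-enriched category $\cL$, package $\Pl(X|;A)$ as the enriched functor $R$, and then read off each clause as a representability/Yoneda statement. The paper's proof is much terser (``the other claims follow by similar comparisons of universal properties''), so your write-up is a faithful expansion of the same argument.

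One soft spot worth tightening: your observation that, for $\mixedhom$, $\with$, $\top$, the $\Gamma=\emptyset$ instance of the universal property forces an $R$-preservation condition is correct, but your resolution --- that these conditions ``are among the data a CBPV pre-structure carries'' --- is not. A CBPV pre-structure is just $(\cE,\cL,R)$ with no preservation hypotheses on $R$. What is actually going on is that the proposition's clauses (iv)--(v) are stated a bit loosely: the full universal property of $\mixedhom$ (resp.\ $\with,\top$) in the \lnl multicategory is equivalent to $\cL$ having the relevant power (resp.\ enriched product) \emph{together with} $R$ preserving it. The paper does not make this explicit in the statement, and its proof (``similar comparisons'') does not address it either. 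So your more careful unwinding has in fact surfaced a small imprecision in the statement rather than a gap in your argument; just don't paper over it with an incorrect claim about what the pre-structure data includes.
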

\begin{proof}
  Of course, \cE corresponds to $\Pnonlin$, which is cartesian monoidal if and only if $\times,1$ exist.
  The arity restrictions then ensure that the linear hom-sets are uniquely determined by those of the form $\Pl(X|A;B)$ and $\Pl(X|;\,B)$.
  The former assemble into an $[\cE\op,\fSet]$-enriched category \cL, and the latter into the functor $R$.

  To say that $R$ lands in \cE is to say that each functor $X\mapsto \Pl(X|;\,B)$ is representable, which is to say that \uoc exists.
  Given this,~\eqref{eq:cbpv-foc} says exactly that $\foc$ is an $[\cE\op,\fSet]$-enriched left adjoint of $\uoc$.
  The other claims follow by similar comparisons of universal properties.
\end{proof}

\begin{cor}\label{thm:cbpv-pow}
  A linearly subunary \lnl multicategory with $\times,1,\uoc,\eechom,\mixedhom,\rtimes$, and restricted $\foc$ (or equivalently $\unit$) is equivalent to a cartesian monoidal category \cE, a \cE-enriched category \cL with powers and copowers, and an object $\unit\in\cL$.
\end{cor}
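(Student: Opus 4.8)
The plan is to read the Corollary off from \cref{rmk:cbpv} by feeding in the listed universal properties and observing that the resulting CBPV pre-structure collapses to the triple $(\cE,\cL,\unit)$. First I would invoke \cref{rmk:cbpv}: since our \lnl multicategory is linearly subunary and has $\times,1$, it is determined by a cartesian monoidal category $\cE=\Pnonlin$, an $[\cE\op,\fSet]$-enriched category \cL with $\underline{\cL}(A,B)(X)=\Pl(X|A;B)$, and an $[\cE\op,\fSet]$-enriched functor $R\colon\cL\to[\cE\op,\fSet]$ with $RA(X)=\Pl(X|{};A)$; moreover morphisms, powers, copowers and so on correspond as listed there.

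Next I would add the remaining data one clause at a time. From $\eechom$ the hom-presheaves $\underline{\cL}(A,B)$ are all representable, i.e.\ lie in the image of the Yoneda embedding $\cE\hookrightarrow[\cE\op,\fSet]$; since that embedding is fully faithful and finite-product-preserving, an $[\cE\op,\fSet]$-enriched category with representable hom-objects is exactly an \cE-enriched category, so \cL is \cE-enriched. From $\uoc$ the functor $R$ lands inside \cE, so we regard it as $\uoc\colon\cL\to\cE$. From $\mixedhom$ and $\rtimes$, \cL has $[\cE\op,\fSet]$-enriched powers and copowers by representable presheaves, which under the identification just made are precisely \cE-enriched powers and copowers by objects of \cE; so \cL is an \cE-enriched category with powers and copowers. (Since \cE is only assumed cartesian monoidal, not closed, I would be explicit that ``powers and copowers'' is to be read through this presheaf-enriched formulation, which is the convention of \cref{rmk:cbpv}.)

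The only step with real content is that, once $\uoc$ and $\rtimes$ are present, a restricted $\foc$ carries the same information as a bare object $\unit\in\cL$. By \cref{rmk:cbpv}, a restricted $\foc$ is an $[\cE\op,\fSet]$-enriched left adjoint to $R=\uoc$. Inside the $[\cE\op,\fSet]$-enriched category \cE every object is the copower of the terminal object $1$ by a representable presheaf, and an enriched left adjoint preserves copowers, so any such $\foc$ satisfies $\foc X\cong X\rtimes(\foc 1)$; writing $\unit=\foc 1$ and evaluating the adjunction isomorphism $\underline{\cL}(\foc X,A)\cong\underline{\cE}(X,\uoc A)$ at $X=1$ gives $\underline{\cL}(\unit,A)\cong\uoc A$ as objects of \cE. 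Conversely, for any $\unit\in\cL$ the copowers supplied by $\rtimes$ make $X\mapsto X\rtimes\unit$ an $[\cE\op,\fSet]$-enriched functor with $\underline{\cL}(X\rtimes\unit,A)\cong\underline{\cE}(X,\underline{\cL}(\unit,A))$, exhibiting it as left adjoint to $\underline{\cL}(\unit,-)$; hence $R$ has a left adjoint exactly when $R\cong\underline{\cL}(\unit,-)$ for some $\unit\in\cL$, in which case $R$ is recovered from $\unit$. (This matches the observation before \cref{rmk:cbpv} that a restricted $\unit$ in the polycategorical sense is an object with $\Pl(\Theta|\unit;C)\cong\Pl(\Theta|{};C)$, i.e.\ with $\underline{\cL}(\unit,-)\cong R$, and that given $\rtimes$ a restricted $\foc$ and a restricted $\unit$ determine one another.)

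Assembling these: a linearly subunary \lnl multicategory with $\times,1,\uoc,\eechom,\mixedhom,\rtimes$ and a restricted $\foc$ amounts to a CBPV pre-structure $(\cE,\cL,R)$ in which \cE is cartesian monoidal, \cL is \cE-enriched with powers and copowers, and $R=\underline{\cL}(\unit,-)$ for a distinguished $\unit\in\cL$; discarding the now-redundant $R$ leaves exactly the triple $(\cE,\cL,\unit)$. These passages are mutually inverse, and compatible with morphisms via \cref{rmk:cbpv}, which yields the asserted equivalence. I expect the only delicate point to be the base-change bookkeeping --- that $[\cE\op,\fSet]$-enrichment with representable hom-objects agrees with \cE-enrichment, and likewise for powers, copowers and the left adjoint above --- which is routine but must be handled with care because \cE need not be cartesian closed.
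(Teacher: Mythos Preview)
Your proposal is correct and follows essentially the same approach as the paper: invoke \cref{rmk:cbpv} to obtain the CBPV pre-structure with an enriched adjunction $\foc\dashv\uoc$, and then argue that this adjunction is determined by the single object $\unit=\foc 1$ via $\foc X\cong X\rtimes\unit$ and $\uoc A\cong\unit\eechom A$. The paper's proof is terser (two sentences), but your expanded treatment of the base-change bookkeeping and the two directions of the $\foc\leftrightarrow\unit$ correspondence fills in exactly the details the paper leaves implicit.
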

\begin{proof}
  \cref{rmk:cbpv} implies exactly this characterization except that instead of $\unit$ we have a \cE-enriched adjunction $\foc : \cE \toot \cL: \uoc$.
  But this is uniquely determined by $\foc 1 \cong \unit$, since $\foc X \cong X \rtimes \unit$ and $\uoc A \cong \unit \eechom A$.
\end{proof}

As before, the arity restrictions can be enforced by slicing: if $\mathsc{cbpv}\in \lnlpoly$ is the subterminal with one nonlinear object, one linear object, all nonlinear homsets and co-unary subunary linear homsets singletons, and others empty, then the linearly subunary \lnl multicategories constitute the slice $\lnlpoly/\mathsc{cbpv}$.
By adding appropriate combinations of universal properties, we obtain various related structures in the literature.
Thus we have the following locally full sub-2-categories of \lnlpoly:
\begin{itemize}
\item CBPV pre-structures, as in \cref{rmk:cbpv}.
\item \textbf{CBPV adjunction models} or \textbf{EC+ models}~\cite{ems:eec}, which are CBPV pre-structures having $\uoc,\mixedhom$, and $\foc,+,\varnothing,\with,\top$ with restricted universal properties.
\item \textbf{EEC+ models}~\cite{ems:eec}, which are EC+ models having also $\to,\eechom,\rtimes$ as well as $\oplus,0$ with restricted universal properties.
  Thus they are structures as in \cref{thm:cbpv-pow} where \cE and \cL both have finite products and coproducts.
\item \textbf{MLJ$^\eta_p$ models}~\cite{cfm:adjpol}, which are CBPV pre-structures having only $\uoc,\mixedhom$, and restricted $\foc$.
\item \textbf{LJ$^\eta_p$ models}, which are MLJ$^\eta_p$ models having also restricted $+,\varnothing,\with,\top$.
\item \textbf{ECBV models}~\cite{ms:lin-state}, which are linearly \emph{unary} \lnl multicategories (that is, all linear morphisms have linear domain \emph{and} codomain of length exactly 1) having $\times,1,\eechom,\rtimes$, but no $\foc$ or $\uoc$.
  Of course, this arity restriction is given by slicing over a different object $\mathsc{ecbv}$.
\end{itemize}

We now consider the ``classical'' case: \lnl polycategories that are not co-unary.

\begin{prop}\label{thm:ldc-adj}
  An \lnl polycategory in which the modality $\foc$ exists is uniquely determined by a functor of symmetric multicategories
  \[ \foc : \Pnonlin \to \Umulti(\Plin) \]
  where $\Pnonlin$ is a cartesian multicategory, $\Plin$ a symmetric polycategory, and $\Umulti$ denotes the underlying symmetric multicategory of a symmetric polycategory.  Also:
  \begin{enumerate}
  \item The modality $\uoc$ also exists if and only if the functor $\foc$ has a right adjoint
    \[\Umulti(\Plin) \to \Pnonlin\]
    in the 2-category of symmetric multicategories.
  \item If $\times,1,\ten,\unit,\coten,\counit$ exist, then $\foc$ is equivalently a strong symmetric monoidal functor from a cartesian monoidal category to (the $\ten$ monoidal structure of) a symmetric linearly distributive one.
  \item Thus, an \lnl polycategory with $\times,1,\ten,\unit,\coten,\counit,\foc,\uoc$ is equivalently an \lnl adjunction $\cM \toot \cL$ in which \cL is linearly distributive.\label{item:la3}
    Moreover, it also has $\duals$ if and only if \cL is $\ast$-autonomous.
  \end{enumerate}
\end{prop}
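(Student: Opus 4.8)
The plan is to follow the proof of \cref{thm:lnl-adj}, with the codomain upgraded from a symmetric multicategory to a symmetric polycategory; the extra codomain objects of linear morphisms ride along passively in the hom-sets and never interact with $\foc$ or with the nonlinear structure. Given an \lnl polycategory $\cP$ in which $\foc$ exists, I would define $\foc:\Pnonlin\to\Umulti(\Plin)$ on objects by $X\mapsto\foc X$ and on a nonlinear morphism $f\in\Pnl(\Theta;Y)$ by composing $f$ with the $\foc$-universal morphism $(Y\mid{})\to\foc Y$ and then iterating the $\foc$-universal bijections in the $X_i$,
\[ \Pnl(\Theta;Y) \;\to\; \Pl(\Theta|{};\,\foc Y) \;\toiso\; \Plin(\foc\Theta;\foc Y), \]
the target being a hom-set of $\Umulti(\Plin)$; functoriality and equivariance for the structural maps are forced by uniqueness of factorizations through universal morphisms, exactly as in \cref{thm:lnl-adj}.

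Conversely, given a functor $\foc:\Pnonlin\to\Umulti(\Plin)$ with $\Pnonlin$ a cartesian multicategory and $\Plin$ a symmetric polycategory, I would reconstruct an \lnl polycategory whose nonlinear part is $\Pnonlin$, whose linear objects are those of $\Plin$, and whose mixed linear hom-sets are
\[ \Pl(\Theta|\Gamma;\Delta) := \Plin(\foc\Theta,\Gamma;\Delta), \]
taking $\circ_A$ and the structural permutations of $\Gamma,\Delta$ from $\Plin$, the mixed composition $\circ_X$ by applying $\foc$ and composing along $\foc X$ in $\Plin$, and the structural action $\act{}{-}{\si}$ from $\Pnonlin$ together with the functoriality of $\foc$. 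Checking the \lnl polycategory axioms is routine and strictly parallel to the multicategorical case; the one mildly tedious point --- and the main obstacle, such as it is --- is verifying that $\circ_X$, defined by pushing through $\foc$, is associative, unital, and equivariant with respect to $\circ_A$ and to the codomain permutations inherited from $\Plin$, but this is formal from functoriality of $\foc$ and the polycategory axioms of $\Plin$, since the codomain lists are inert under every operation involving a nonlinear object. That the two passages are mutually inverse, and identify $\foc$-preserving functors with functors over $\Umulti(\Plin)$, is then immediate: the reconstructed polycategory has $\foc$ witnessed by $1_{\foc X}\in\Plin(\foc X;\foc X)=\Pl(X|{};\,\foc X)$, its universal bijections being the codomain permutations of $\Plin$, and for a given $\cP$ the iterated $\foc$-universal property yields $\Pl(\Theta|\Gamma;\Delta)\cong\Plin(\foc\Theta,\Gamma;\Delta)$ compatibly with all structure.

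For~(1): under the reconstruction $\Pl(\Theta|{};\,A)=\Plin(\foc\Theta;A)$, so a $\uoc$-modality at $A$ is precisely an object $\uoc A$ with $\Pnl(\Theta;\uoc A)\cong\Plin(\foc\Theta;A)$ naturally in $\Theta$, and a choice of these for all $A$ is, by the standard equivalence between the hom-set and unit/counit presentations of an adjunction (now internal to the 2-category of symmetric multicategories), the same thing as a right adjoint of $\foc:\Pnonlin\to\Umulti(\Plin)$. For~(2): when $\times,1$ exist, $\Pnonlin$ is representable, hence a cartesian monoidal category by \cref{thm:cartprod}, and when $\ten,\unit$ exist, $\Umulti(\Plin)$ is the underlying multicategory of the monoidal category $(\Plin,\ten,\unit)$, so a functor $\foc:\Pnonlin\to\Umulti(\Plin)$ of symmetric multicategories is the same as a lax symmetric monoidal functor into it; its coherence maps $\foc X\ten\foc Y\to\foc(X\times Y)$ and $\unit\to\foc 1$ are invertible, because $\foc(X\times Y)$ and $\foc X\ten\foc Y$ both realise $X\boxtimes Y$ and $\foc 1$ realises $\unit$ (the discussion following \cref{thm:univ-comp}), so $\foc$ is strong; adding $\coten,\counit$ makes $\Plin$ a linearly distributive category. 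Conversely, a strong symmetric monoidal functor from a cartesian monoidal category to the $\ten$-structure of a linearly distributive one gives, via the representable-multicategory and linearly-distributive-polycategory dictionaries of~\cite{cs:wkdistrib,bz:bifib-poly}, such a functor $\foc$, and the data matches.

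Finally~(3): combining~(2) and~(1), an \lnl polycategory with $\times,1,\ten,\unit,\coten,\counit,\foc,\uoc$ is the same as a strong symmetric monoidal functor $\foc:\cM\to\cL$ from a cartesian monoidal $\cM$ to a linearly distributive $\cL$ (in its $\ten$-structure) together with a right adjoint $\uoc$ of $\foc$; since a strong monoidal left adjoint makes its right adjoint lax monoidal and the adjunction monoidal, this is exactly an \lnl adjunction $\cM\toot\cL$ with $\cL$ linearly distributive, and conversely every such \lnl adjunction restricts to this data. For the last clause, $\duals$ involves only linear objects and linear morphisms with empty nonlinear context, so $\cP$ has $\duals$ iff $\Plin$ has $\duals$, which (with $\Plin$ already linearly distributive) holds iff $\cL$ is $\ast$-autonomous.
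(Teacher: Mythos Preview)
Your proof is correct and follows essentially the same approach as the paper's own proof, which simply says ``Like \cref{thm:lnl-adj}, using the result of~\cite{cs:wkdistrib}'' and notes that the universal property of $\foc$ relative to arbitrary-codomain linear morphisms forces it to be determined by its action on underlying multicategories, while $\uoc$ involves only co-unary morphisms. You have spelled out in detail exactly what the paper leaves implicit by reference to \cref{thm:lnl-adj}.
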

\begin{proof}
  As in \cref{thm:lnl-adj}, we make the modality $\foc$ in an \lnl polycategory into a functor using its universal property; while given a functor as above we define the general linear homsets by
  \[\Pl(X_1,\dots,X_n | \Gamma;\Delta) = \Plin(\foc X_1,\dots,\foc X_n,\Gamma\mathbin{;}\Delta)\]
  so that the universal property of $\foc$ holds by definition.
  The rest is also similar to \cref{thm:lnl-adj}, using the result of~\cite{cs:wkdistrib} that a symmetric polycategory with $\ten,\unit,\coten,\counit$ is equivalently a symmetric linearly distributive category.
  The universal property of $\foc$ relative to linear morphisms with arbitrary codomain ensures that it is uniquely determined by its action on underlying multicategories, while $\uoc$ knows nothing about the non-co-unary morphisms at all.
\end{proof}

Note that since $\fwn$ and $\uwn$ can be defined in terms of $\foc,\uoc,\duals$ by $\fwn X = \d{(\foc X)}$ and $\uwn A = \uoc(\d A)$, an \lnl adjunction with \cL $\ast$-autonomous also has $\fwn,\uwn$.
Thus, we have the following locally full sub-2-categories of \lnlpoly:
\begin{itemize}
\item \textbf{Linearly distributive \lnl adjunctions} and \textbf{$\ast$-autonomous \lnl adjunctions}, defined as in \cref{thm:ldc-adj}\ref{item:la3}.
\item Linearly distributive \lnl adjunctions with any desired limits and colimits in either category, subject to the restrictions that colimits must be preserved by the product or tensor product in each variable, and limits in the linearly distributive category must be preserved by the cotensor product in each variable.
\item $\ast$-autonomous closed \lnl adjunctions with any desired limits and colimits in either category.
\end{itemize}

On the other hand, if we add $\fwn$ and $\uwn$ \emph{without} $\duals$, the induced structure on \cL is also one that appears in the literature:

\begin{prop}\label{thm:ldc-stor}
  If \cP is an \lnl polycategory with $\ten,\unit,\coten,\counit,\foc,\uoc,\fwn,\uwn$, then $\Plin$ is a \textbf{\emph{(symmetric) linearly distributive category with storage}}~\cite{bcs:storage}.
\end{prop}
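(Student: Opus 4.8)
The plan is to read off the data of a storage structure from \cP by manipulating universal properties, following \cref{thm:linexpcmnd}, and then dualizing. First, since $\ten,\unit,\coten,\counit$ exist, $\Plin$ is a (symmetric) linearly distributive category --- this was already noted in the proof of \cref{thm:ldc-adj} (via~\cite{cs:wkdistrib}). The adjunction $\foc\dashv\uoc$ of \cref{thm:ldc-adj}, and the dual adjunction $\uwn\dashv\fwn$, induce a comonad $\oc=\foc\uoc$ and a monad $\wn=\fwn\uwn$ on $\cL:=\Plin$; these will be the two modalities of the storage structure.

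Next I would equip $\oc$ with the structure of a linear exponential comonad on the $\ten$-monoidal structure $(\cL,\ten,\unit)$ --- which is precisely the $\oc$-half of a storage structure in the sense of~\cite{bcs:storage}. The construction copies that of \cref{thm:linexpcmnd} essentially verbatim: the monoidal structure map $\oc A\ten\oc B\to\oc(A\ten B)$ and its nullary counterpart $\unit\to\oc\unit$, the cocommutative $\ten$-comonoid structure $e_A:\oc A\to\unit$ and $d_A:\oc A\to\oc A\ten\oc A$ on each $\oc A$, and the comultiplication and counit of $\oc$, are all obtained by acting on the relevant $\ten$-universal morphisms by the $\foc$- and $\uoc$-universal morphisms together with the weakening and contraction structural maps of the cartesian multicategory $\Pnonlin$ acting on the nonlinear object $\uoc A$. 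As in \cref{thm:linexpcmnd}, this invokes only the co-unary instances of the universal properties of $\foc$ and $\uoc$, together with the universal property of $\ten,\unit$; in particular it does \emph{not} require $\times$ or $1$ to exist --- which is exactly why a storage structure, rather than a Seely structure, is what results --- and it never touches the non-co-unary morphisms of \cP. Each linear-exponential coherence axiom, transported along these bijections, becomes either a manipulation of $\ten$-universal morphisms or one of the cartesian-multicategory identities already available for weakening and contraction on $\uoc A$.

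Dually, I would equip $\wn$ with the structure of a linear exponential monad on the $\coten$-monoidal structure: a comonoidal structure map $\wn(A\coten B)\to\wn A\coten\wn B$ with nullary counterpart $\wn\counit\to\counit$, a commutative $\coten$-monoid structure $\counit\to\wn A$ and $\wn A\coten\wn A\to\wn A$ on each $\wn A$, and the multiplication and unit of $\wn$, built by the exact formal dual of the preceding paragraph --- now acting on $\coten$-universal morphisms by the $\fwn$- and $\uwn$-universal morphisms and the same structural maps on $\uwn A$, using only $\coten,\counit,\fwn,\uwn$ and, again, no products. Should the definition of~\cite{bcs:storage} impose a further coherence linking the $\oc$- and $\wn$-halves through the linear distributivities --- the analogue, in the absence of $\duals$, of the identity $\wn A\cong\d{(\oc\,\d A)}$ --- that too is checked by the same kind of universal-property bookkeeping.

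The difficulty here is organizational rather than conceptual: one must verify the complete list of storage coherence axioms, each of which reduces, after transport along the defining bijections, to a structural identity in $\Pnonlin$ or a routine manipulation of $\ten$- or $\coten$-universal morphisms. The one point that needs genuine care is confirming that the relevant universal properties --- the co-unary instances for $\oc$, the dual unary-domain/empty-codomain instances for $\wn$ --- really do suffice, even though \cP also carries morphisms of larger co-arity. A fully rigorous account can be assembled by running the argument of \cref{thm:linexpcmnd}, and its formal dual, on the $\ten$- and $\coten$-monoidal structures of $\Plin$.
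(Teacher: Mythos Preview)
Your outline matches the paper's approach for the first two-thirds: apply \cref{thm:linexpcmnd} to the underlying (co-unary) \lnl multicategory of \cP to obtain the linear exponential comonad $\oc$, and apply its formal dual (equivalently, apply \cref{thm:linexpcmnd} to the linear opposite $\cP\lop$) to obtain the linear exponential monad $\wn$. The paper does exactly this.

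However, there is a genuine gap at your final step. The definition of storage in~\cite{bcs:storage} \emph{does} require further structure linking $\oc$ and $\wn$: namely, $\wn$ must be a $\oc$-strong monad (and dually), meaning there is a strength map $\wn A \ten \oc B \to \wn(A\ten \oc B)$ satisfying appropriate axioms. This is not merely a coherence to be ``checked'' but additional \emph{data} that must be constructed, and its construction is the main content of the proof beyond the two invocations of \cref{thm:linexpcmnd}. The paper builds it by acting on the $\uwn$-universal morphism $(\uwn(A\ten \oc B)\mid A\ten \oc B)\to()$ through a chain of bijections that passes through the \emph{nonlinear} homset $\Pnl(\uwn(A\ten \oc B), \uoc B;\,\uwn A)$ and then back out via the $\fwn$- and $\foc$-universal morphisms. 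This genuinely mixes all four modalities in a single construction and uses the cartesian-multicategory structure to put $\uwn(-)$ and $\uoc(-)$ side by side in a nonlinear context; it is not an instance of the bookkeeping you did separately for $\oc$ and for $\wn$. Your sentence ``that too is checked by the same kind of universal-property bookkeeping'' papers over precisely the part that requires a new idea.
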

\begin{proof}
  Note that any \lnl polycategory \cP has an underlying \lnl multicategory \linebreak[4] $\coun\cP$ containing all the objects, all the nonlinear morphisms, but only the co-unary linear morphisms.
  It also has a \textbf{linear opposite} $\cP\lop$ in which the nonlinear morphisms are the same, but $\linhom{\cP\lop}(\Theta|\Gamma;\Delta) = \linhom{\cP}(\Theta|\Delta;\Gamma)$.

  Thus, applying \cref{thm:linexpcmnd} to $\coun\cP$ and $\coun{\cP\lop}$, we obtain a linear exponential comonad $\oc=\foc\uoc$ and a linear exponential monad $\wn = \fwn\uwn$, so it remains only to show that $\wn$ is a $\oc$-strong monad and dually.
  We obtain the morphism $\wn A \ten \oc B \to \wn (A\ten \oc B)$ by acting on the $\uwn$-universal morphism of $(\uwn(A\ten \foc\uoc B)\mid \,) \to A\ten \foc\uoc B$ as follows.
  \begingroup\allowdisplaybreaks
  \begin{alignat*}{2}
    \Pl(\uwn{(A\ten \foc\uoc B)} | A\ten \foc\uoc B ; )
    &\toiso &\;& \Pl(\uwn{(A\ten \foc\uoc B)} | A, \foc\uoc B ; )\\
    &\toiso &\;& \Pl(\uwn{(A\ten \foc\uoc B)}, \uoc B | A ; )\\
    &\toiso &\;& \Pnl(\uwn{(A\ten \foc\uoc B)}, \uoc B ; \uwn A)\\
    &\to &\;& \Pl(\uwn{(A\ten \foc\uoc B)}, \uoc B|\fwn\uwn A ; ) \\
    &\toiso &\;& \Pl(|\fwn\uwn A, \foc\uoc B; \fwn\uwn{(A\ten \foc\uoc B)})\\
    &\toiso &\;& \Pl(|\fwn\uwn A \ten \foc\uoc B; \fwn\uwn{(A\ten \foc\uoc B)})\\
    &= &\;& \Pl(|\wn A \ten \oc B ; \wn {(A\ten \oc B)}).
  \end{alignat*}
  \endgroup
  The noninvertible map above is composition with the $\fwn$-universal $(\uwn A \mid \fwn \uwn A) \to ()$.
  It is straightforward to check the axioms.
  (This is like the proof in~\cite[\S3.1]{bcs:storage} that proof nets with storage boxes form a linearly distributive category with storage.)
\end{proof}

The converse of \cref{thm:ldc-stor} is subtler.
If \cL is a symmetric linearly distributive category with storage, it is in particular a symmetric monoidal category (under $\ten,\unit$) with a linear exponential comonad $\oc$.
Therefore, it gives rise to an \lnl adjunction $\cM\toot\cL$ as above, where \cM is the Eilenberg--Moore category of the comonad $\oc$.
Hence, by \cref{thm:ldc-adj}, any subcategory of this \cM (such as the Kleisli category) yields an \lnl polycategory \cP with $\Plin = \cL$ and having $\ten,\unit,\coten,\counit,\foc,\uoc$.
Similarly, any subcategory of the opposite of the Eilenberg--Moore category of the monad $\wn$ yields an \lnl polycategory \cP with $\Plin=\cL$ and having $\ten,\unit,\coten,\counit,\fwn,\uwn$.

If \cL has duals, hence is $\ast$-autonomous, then by~\cite[Proposition 5.1]{bcs:storage} the modalities $\oc$ and $\wn$ are dual, in that $\wn A \cong \d{(\oc(\d A))}$.
This implies that their Eilenberg--Moore and Kleisli categories are dual to each other, by equivalences that lie over the self-duality $\duals$; hence these two \lnl polycategories coincide and
are a $\ast$-autonomous \lnl adjunction that induces the given $\oc$ and $\wn$.
However, if \cL does not have duals, then the Eilenberg-Moore categories of $\oc$ and $\wn$ need not be dual:

\begin{exa}\label{eg:dlat}
  Let \cL be a distributive lattice that is not a Boolean algebra.
  As in~\cite{cs:wkdistrib}, we can regard \cL as a linearly distributive category with $\mathord{\ten}=\mathord{\wedge}$ and $\mathord{\coten}=\mathord{\vee}$.
  Since $\wedge$ is the cartesian product and $\vee$ the cartesian coproduct, we can equip \cL with storage modalities $\oc$ and $\wn$ that are both just the identity.
  (Thanks to Robin Cockett for pointing out this example.)
  The Eilenberg--Moore categories of this $\oc$ and $\wn$ are then both just $\cL$ itself, which may not be self-dual.

  In fact this \cL cannot occur as $\Plin$ for \emph{any} \lnl polycategory \cP with $\foc,\uoc,\fwn,\uwn$ such that its (identity) modalities $\oc$ and $\wn$ are recovered as $\foc\uoc$ and $\fwn\uwn$ respectively.
  To see this, note that for any nonlinear object $X$ in an \lnl polycategory, if $\foc X$ and $\fwn X$ both exist, then they are dual to each other.
  Thus, if $\foc,\fwn$ both exist, then any object of the form $\foc X$ or $\fwn X$ has a dual --- and hence if $\oc = \foc\uoc$ is the identity, then \emph{every} object has a dual.
  But this would imply that \cL is a Boolean algebra.
\end{exa}

Thus, if we want to embed a general linearly distributive category with storage into an \lnl polycategory, we have to give up on having all $\foc,\uoc,\fwn,\uwn$.
But we can get away with something slightly less:

\begin{prop}\label{thm:ldc-kl}
  A linearly distributive category \cL admits storage modalities if and only if it can occur as $\Plin$ for an \lnl polycategory \cP having $\ten,\unit,\coten,\counit,\uoc,\uwn$ along with $\foc$ defined on the image of $\uoc$ and $\fwn$ defined on the image of $\uwn$.
\end{prop}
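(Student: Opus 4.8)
The plan is to prove the two implications separately: the ``only if'' direction is a mild extension of \cref{thm:ldc-stor}, while the ``if'' direction requires constructing \cP by hand.

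For ``only if'', suppose $\cL=\Plin$ for such a \cP, and set $\oc=\foc\uoc$ and $\wn=\fwn\uwn$ (both defined everywhere, since $\uoc,\uwn$ are total). The key observation is that the proofs of \cref{thm:linexpcmnd,thm:ldc-stor} never evaluate $\foc$ except on objects of the form $\uoc(\blank)$, nor $\fwn$ except on objects $\uwn(\blank)$: each use occurs as a factor of $\oc=\foc\uoc$ or $\wn=\fwn\uwn$. Thus, applying the argument of \cref{thm:linexpcmnd} to the underlying \lnl multicategory $\coun\cP$ yields the linear exponential comonad $\oc$ on $(\cL,\ten,\unit)$, applying it to $\coun{\cP\lop}$ yields the linear exponential monad $\wn$ on $(\cL,\coten,\counit)$, and the strength computation of \cref{thm:ldc-stor} supplies the remaining storage compatibility---all using only the restricted modalities. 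Hence $\cL$ admits storage.

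For ``if'', suppose $\cL$ is a linearly distributive category with storage, so that $\oc$ is a linear exponential comonad for $(\ten,\unit)$ and $\wn$ a linear exponential monad for $(\coten,\counit)$. I would define \cP to have the linear objects of $\cL$, with $\Plin$ the symmetric polycategory of $\cL$ (which has $\ten,\unit,\coten,\counit$ since $\cL$ is linearly distributive), and two disjoint copies of $\ob\cL$ as nonlinear objects, written $\uoc A$ and $\uwn A$. Each nonlinear object is transported into the shared polycategory of $\cL$ along the modality it names: $\uoc A$ into the linear domain as $\oc A$, and $\uwn B$ into the linear codomain as $\wn B$. Concretely, writing $\cL(\Gamma;\Delta)$ for the polycategorical homsets of $\cL$, for $\Theta$ comprising objects $\uoc A_i\ (i\in I)$ and $\uwn B_j\ (j\in J)$ I set
\[ \Pl(\Theta|\Gamma;\Delta) := \cL\big(\Gamma,(\oc A_i)_{i\in I}\,;\,\Delta,(\wn B_j)_{j\in J}\big), \]
together with $\Pnl(\Theta;\uoc C):=\Pl(\Theta|{};C)$ and $\Pnl(\Theta;\uwn C):=\Pl(\Theta|C;{})$, as forced by the intended universal properties of $\uoc,\uwn$. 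With these definitions $\Plin$ is recovered as the $\Theta=\emptyset$ homsets; the universal properties $\ten,\unit,\coten,\counit$ lift from $\cL$ because transport merely enlarges a fixed linear context; and the restricted $\foc,\uoc,\fwn,\uwn$ hold by construction, with $\foc\uoc A=\oc A$ and $\fwn\uwn A=\wn A$.

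The main obstacle is to verify that this data genuinely forms an \lnl polycategory. The structural action of a function $\si:\Theta'\to\Theta$ on nonlinear objects is where the linear exponential comonad and monad structures do real work: duplicating or discarding a domain-type $\uoc A$ is implemented by the cocommutative comonoid structure that a linear exponential comonad places on $\oc A$ (using $\ten$), while duplicating or discarding a codomain-type $\uwn B$ uses the commutative monoid structure that a linear exponential monad places on $\wn B$ (using $\coten$); permutation is the symmetry of $\cL$. Composition along $\uoc A$ is the Kleisli composition of the comonad $\oc$, and along $\uwn B$ the Kleisli composition of the monad $\wn$. Because the two families of nonlinear objects are transported to disjoint coordinates---one always into the domain, the other always into the codomain---their actions and compositions never interfere, so every \lnl-polycategory axiom reduces to a (co)monoid axiom for $\oc$ or $\wn$ together with associativity, unitality, and naturality in the polycategory $\cL$. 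This exhibits \cP as the gluing, along the common $\Plin=\cL$, of the two \lnl polycategories produced (via \cref{thm:ldc-adj}) by the Kleisli construction on $\oc$ and the opposite Kleisli construction on $\wn$ discussed before the proposition; the point needing care is precisely that no compatibility between the two constructions is required, since neither modality is asked to act on the other's nonlinear objects---exactly the weakening that \cref{eg:dlat} shows to be necessary.
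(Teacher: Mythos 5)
Your first direction (note that your labels are swapped: deriving storage from the existence of such a \cP is the ``if'' direction of the statement as written) is correct and coincides with the paper's one-line argument: the proofs of \cref{thm:linexpcmnd,thm:ldc-stor} only ever apply $\foc$ to objects in the image of $\uoc$ and $\fwn$ to objects in the image of $\uwn$, so the restricted modalities suffice.

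The construction in your other direction has a genuine gap, located exactly at the claim you make at the end that ``no compatibility between the two constructions is required'' and that the $\oc$-side and $\wn$-side ``never interfere.'' Your objects and homsets agree precisely with the paper's double-Kleisli polycategory $\bkll$, but composition along a nonlinear object in a \emph{mixed} context is not plain Kleisli composition. Take $g \in \Pnl(\uoc A,\uwn B;\uoc C) = \cL(\oc A,\,\wn B \coten C)$ and $h \in \Pl(\uoc C|;\,D) = \cL(\oc C,D)$; the composite $h\circ_{\uoc C} g$ must lie in $\Pl(\uoc A,\uwn B|;\,D) = \cL(\oc A,\,\wn B \coten D)$. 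To define it one must first lift $g$ to a map $\oc A \to \wn B \coten \oc C$ via
\[ \oc A \too \oc\oc A \xto{\oc g} \oc(\wn B \coten C) \too \wn B \coten \oc C, \]
and the last arrow is the storage strength moving $\oc$ past $\wn$ (in general $q$ such strengths are needed). This is the ``twisted'' ingredient of the paper's \emph{generalized Kleisli lift}, and it is precisely a compatibility between the comonad $\oc$ and the monad $\wn$ --- the part of the storage hypothesis beyond the two separate linear exponential structures. Your proposal never invokes this strength, and without it the mixed compositions simply cannot be defined, so the verification that \cP is an \lnl polycategory does not go through as you describe it. What \cref{eg:dlat} forces one to abandon is only the \emph{totality} of $\foc$ and $\fwn$; it does not make the $\oc$/$\wn$ interaction dispensable. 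Indeed, if your gluing really needed no compatibility, then combining it with the other direction would produce storage strengths for free from any pair of a linear exponential comonad and monad on a linearly distributive category, rendering part of the definition of ``storage'' redundant; nothing in your argument supports such a strengthening.
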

\begin{proof}
  For ``if'', just note that the proof of \cref{thm:ldc-stor} uses only this weaker hypothesis.
  For ``only if'', let \cL be a symmetric linearly distributive category with storage, and define an \lnl polycategory $\bkll$ as follows.
  Its linear objects are the objects of \cL, while its nonlinear objects consist of two copies of the objects of \cL denoted $A^\oc$ and $A^\wn$.
  Its homsets are defined by:
  \begin{multline*}
    \linhom{\bkll}(A_1^\oc,\dots,A_p^\oc,B_1^\wn,\dots,B_q^\wn | C_1,\dots,C_m ; D_1,\dots, D_n)\\
    = \cL(\oc A_1\ten \cdots \ten \oc A_p \ten C_1\ten\cdots \ten C_m\,,\, \wn B_1 \coten \cdots \coten \wn B_q \coten D_1 \coten\cdots\coten D_n)
  \end{multline*}
  \begin{align*}
    \nonlinhom{\bkll}(A_1^\oc,\dots,A_p^\oc,B_1^\wn,\dots,B_q^\wn  ; C^\oc )
    &= \cL(\oc A_1\ten \cdots \ten \oc A_p\,,\, \wn B_1 \coten \cdots \coten \wn B_q \coten C)\\
    \nonlinhom{\bkll}(A_1^\oc,\dots,A_p^\oc,B_1^\wn,\dots,B_q^\wn  ; C^\wn )
    &= \cL(\oc A_1\ten \cdots \ten \oc A_p\ten C\,,\, \wn B_1 \coten \cdots \coten \wn B_q)
  \end{align*}
  In particular, we have
  \begin{alignat*}{2}
    \nonlinhom{\bkll}(A^\oc ; C^\oc) &= \cL(\oc A, C) &\qquad
    \nonlinhom{\bkll}(A^\oc ; C^\wn) &= \cL(\oc A\ten C, \counit)\\
    \nonlinhom{\bkll}(B^\wn ; C^\wn) &= \cL(C, \wn B) &\qquad
    \nonlinhom{\bkll}(B^\wn ; C^\oc) &= \cL(\unit, \wn B \coten C).
  \end{alignat*}
  That is, the category of nonlinear objects and unary morphisms consists of a copy of the Kleisli category of $\oc$ (the objects $A^\oc$) and a copy of the opposite of the Kleisli category of $\wn$ (the objects $B^\wn$), with the morphisms between the two defined in a twisted way using the linearly distributive structure.

  Composition of two linear morphisms is defined just as in the ordinary symmetric polycategory underlying \cL.
  To compose a nonlinear morphism with either a linear or nonlinear morphism, we make use of the ``generalized Kleisli lift'': given
  \[ f : \oc A_1\ten \cdots \ten \oc A_p \too \wn B_1 \coten \cdots \coten \wn B_q \coten C \]
  we can construct the composite
  \begin{align*}
    \oc A_1\ten \cdots \ten \oc A_p
    &\to \oc \oc A_1\ten \cdots \ten \oc \oc A_p\\
    &\to \oc (\oc A_1\ten \cdots \ten \oc A_p)\\
    &\xto{\oc f} \oc(\wn B_1 \coten \cdots \coten \wn B_q \coten C)\\
    &\to \wn B_1 \coten \cdots \coten \wn B_q \coten \oc C
  \end{align*}
  where the first map is composed of the comultiplications $\oc A_i \to \oc\oc A_i$ of $\oc$, the second map is the lax monoidal structure of $\oc$, the third in $\oc f$, and the fourth is $q$ applications of the strength $\oc(\wn B \coten C ) \to \wn B \coten \oc C$.
  By first applying this construction to a nonlinear morphism with codomain $C^\oc$, or the dual construction to one with codomain $C^\wn$, we can then compose it along this object with any other morphism as usual in the underlying polycategory of \cL.

  Of course this \lnl polycategory has $\ten,\unit,\coten,\counit$.
  By construction it has $\uoc A = A^\oc$ and $\uwn A = A^\wn$, and partially defined $\foc A^\oc = \oc A$ and $\fwn A^\wn = \wn A$.
  Note that this is very similar to the proof in~\cite[\S3.2]{bcs:storage} that proof nets with storage are sound for linearly distributive categories with storage.
\end{proof}

This ``double Kleisli category'' construction is functorial, and lands inside the slice category $\lnlpoly/\mathsc{dblsplit}$ from \cref{rmk:double-split}.
In terms of this slice, we can describe the restricted domains of $\foc$ and $\fwn$ by saying that $\foc$ is defined on left-hand objects and $\fwn$ on right-hand ones.

Moreover, if \cL is $\ast$-autonomous, then $A^\wn \cong (\d{A})^\oc$ in $\nonlin{(\bkll)}$.
Thus in this case $\bkll$ is equivalent (though not isomorphic) to the Kleisli adjunction of $\oc$ and also to the Kleisli adjunction of $\wn$.

This gives us the following locally full sub-2-categories of \lnlpoly:
\begin{itemize}
\item Linearly distributive categories with storage.
\item $\ast$-autonomous categories with storage.
\item Linearly distributive or $\ast$-autonomous categories with storage, any desired colimits preserved by the tensor product in each variable, and any desired limits preserved by the cotensor product in each variable.
\end{itemize}

\section{Unifying universality}
\label{sec:bifib}

In defining \lnl doctrines, we will want to work generally with classes of universal arrows and colimits in \lnl polycategories.
Unfortunately, the different kinds of objects and morphisms in an \lnl polycategory make such a general treatment quite cumbersome.
For instance, we already saw in \cref{sec:lnl-polycategories} that there are formally five different kinds of ``universal morphism'' in an \lnl polycategory, which has the consequence that a fully formal proof of \cref{thm:univ-comp} (universal morphisms compose) would have on the order of 25 different cases to consider.\footnote{Not exactly 25, of course, since some pairs of universal morphisms will not be composable.}
Similarly, there are four different kinds of limits and colimits, and so on.
Duality doesn't simplify the situation significantly either, since an \lnl polycategory has no ``opposite'' that reverses the nonlinear morphisms.
Nevertheless, there is a clear intuition that this technical multiplicity is in some sense ``inessential'': all the cases behave similarly.
In this section we give an alternative definition of \lnl polycategories that enables us to formally unify these cases.

Given a set of objects partitioned into linear and nonlinear ones, by a \textbf{signed object} we mean an object together with an element of $\{-,+\}$, written $R^+$ or $R^-$, where $R$ is a (linear or nonlinear) object.
We denote general signed objects by letters towards the middle of the Roman alphabet such as $K,L,M,\dots$, and lists of signed objects by the Greek letters $\Phi,\Psi$.
If $K$ is a signed object we write $K\D$ for the result of flipping its sign: $(R^+)\D = R^-$ and $(R^-)\D = R^+$.

\begin{defi}
  A list of signed objects is \textbf{admissible} if
  \begin{enumerate}
  \item it contains at most one positive nonlinear object, and
  \item if it does contain one such, then it contains no linear objects.
  \end{enumerate}
\end{defi}

\begin{lem}
  If $(\Phi,K)$ and $(K\D,\Psi)$ are admissible, so is $(\Phi,\Psi)$.
\end{lem}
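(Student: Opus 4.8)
The plan is to run a short finite case analysis on the signed object $K$, supported by two easy preliminary observations. The first is that whether a list is admissible depends only on which signed objects occur in it, not on their order or multiplicity, since both conditions are phrased in terms of which objects a list "contains"; in particular $(\Phi,\Psi)$ is admissible if and only if $(\Psi,\Phi)$ is, so I may reorder freely. The second is that $K$ and $K\D$ have the same underlying object $R$ and the same sort (linear or nonlinear), differing only in sign, so the natural split is on whether $R$ is linear or nonlinear.

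If $R$ is linear, then $(\Phi,K)$ contains a linear object, namely $K$; by the contrapositive of the second admissibility condition it therefore contains no positive nonlinear object, and so $\Phi$ contains no positive nonlinear object. Symmetrically $(K\D,\Psi)$ contains the linear object $K\D$, so $\Psi$ contains no positive nonlinear object. Hence $(\Phi,\Psi)$ contains no positive nonlinear object at all, and both admissibility conditions hold trivially.

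If $R$ is nonlinear, then one of $K,K\D$ is the positive nonlinear object $R^+$ and the other is $R^-$; after reordering if necessary (legitimate by the first observation) I may assume $K = R^-$ and $K\D = R^+$. Since $R^-$ is a negative nonlinear object, $(\Phi,R^-)$ has exactly the same positive nonlinear objects and the same linear objects as $\Phi$, so $\Phi$ is itself admissible. On the other side, $(R^+,\Psi)$ already contains the positive nonlinear object $R^+$, so by the first condition $\Psi$ contains no positive nonlinear object, and by the second condition $\Psi$ contains no linear object. Now I check $(\Phi,\Psi)$: its positive nonlinear objects are precisely those of $\Phi$ (as $\Psi$ has none), of which there is at most one by admissibility of $\Phi$, giving the first condition; and if there is such an object it lies in $\Phi$, whence $\Phi$ — and hence $(\Phi,\Psi)$, since $\Psi$ contributes no linear objects — contains no linear object, giving the second.

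Since the whole argument is a finite check, I do not expect a genuine obstacle. The one place to stay careful is the bookkeeping around $(-)\D$: it flips the sign but preserves the sort, so it is the linear/nonlinear dichotomy on $R$, not the sign of $K$, that drives the proof; and the "admissibility is a property of the underlying multiset" remark is exactly what licenses the reduction to the single case $K = R^-$ in the nonlinear branch.
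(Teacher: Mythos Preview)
Your proof is correct, but it is organized around a different case split than the paper's. The paper argues directly: suppose some positive nonlinear $X^+$ occurs in $(\Phi,\Psi)$, say in $\Phi$; then admissibility of $(\Phi,K)$ forces $K$ and all of $\Phi$ besides $X^+$ to be negative nonlinear, whence $K\D$ is positive nonlinear and admissibility of $(K\D,\Psi)$ forces all of $\Psi$ to be negative nonlinear; the case $X^+\in\Psi$ is symmetric. You instead split on the sort of the underlying object of $K$, which requires an extra (easy) case when $K$ is linear and a symmetry reduction when $K$ is nonlinear. Both routes are short; the paper's is marginally more economical because it never needs to name the sort of $K$ explicitly.

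One side remark: your first observation overreaches slightly. Admissibility is invariant under reordering, which is all you actually use, but it is \emph{not} invariant under multiplicity --- the list $(X^+,X^+)$ with $X$ nonlinear contains two positive nonlinear entries and is not admissible, while $(X^+)$ is. Since your argument only invokes order-invariance (to swap the roles of $\Phi$ and $\Psi$ in the nonlinear case), this does not affect correctness, but the multiplicity claim should be dropped.
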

\begin{proof}
  If a positive nonlinear object $X^+$ appears in $\Phi$, then $K$ and all other objects in $\Phi$ must be negative nonlinear.
  Hence $K\D$ is positive nonlinear, so all objects in $\Psi$ are also negative nonlinear.
  We can argue similarly if $\Psi$ contains $X^+$.
\end{proof}

By a \textbf{structural map} we mean a morphism $\si : (K_1,\dots, K_m)\to (K_{\si 1},\dots, K_{\si n})$ where $(K_1,\dots, K_m)$ is a list of signed objects and $\si : \{1,\dots,n\} \to \{1,\dots,m\}$ is a function with the property that for any $j$ with $1\le j\le m$, if $|\si^{-1}(j)|\neq 1$ then $K_j$ is negative and nonlinear.

\begin{defi}
  An \textbf{entries-only \lnl polycategory} \cP consists of:
  \begin{itemize}
  \item A set of \textbf{objects} partitioned into linear and nonlinear ones.
  \item For any admissible list of signed objects $(K_1,\dots,K_n)$, a hom-set $\cP(K_1,\dots,K_n)$, with functorial actions $\cP(\Psi)\to\cP(\Phi)$ by structural maps $\si : \Phi\to\Psi$.
  \item For any object $R$ (linear or nonlinear), an identity $1_R \in \cP(R^-,R^+)$.
  \item Whenever $(\Phi,K)$ and $(K\D,\Psi)$ are admissible, a composition map
    \[ \circ_K : \cP(K\D,\Psi)\times \cP(\Phi,K) \to \cP(\Phi,\Psi) \]
    that is associative, unital, and equivariant with respect to the structural actions and permutations that swap the two inputs.
  \end{itemize}
  A \textbf{functor} between entries-only \lnl polycategories consists of functions between their linear and nonlinear objects and morphisms, preserving entries, structural actions, identities, and composites.
\end{defi}

\begin{prop}
  The category of entries-only \lnl polycategory is equivalent to that of \lnl polycategories.
\end{prop}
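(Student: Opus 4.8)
The plan is to give explicit functors in both directions and check they are mutually quasi-inverse. The guiding dictionary is that a linear morphism with nonlinear context $\Theta$, linear domain $\Gamma$, and linear codomain $\Delta$ should become an entry on the signed list $(\Theta^-,\Gamma^-,\Delta^+)$, and a nonlinear morphism with context $\Theta$ and codomain $X$ an entry on $(\Theta^-,X^+)$; the admissibility and fibre conditions in the preceding definitions are exactly the translation of ``only the nonlinear inputs may be duplicated or discarded.''

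\emph{From \lnl polycategories to entries-only ones.} Given an \lnl polycategory $\cP$, let $F\cP$ have the same objects and partition. An admissible list $\Phi$ has exactly one of two shapes: either it contains a (then unique, and then no linear objects) positive nonlinear entry $X^+$, or it contains no positive nonlinear entry. In the first case set $(F\cP)(\Phi):=\nonlinhom\cP(\Theta;X)$ with $\Theta$ the negative nonlinear entries of $\Phi$ in order of occurrence; in the second set $(F\cP)(\Phi):=\linhom\cP(\Theta|\Gamma;\Delta)$ with $\Theta,\Gamma,\Delta$ the negative nonlinear, negative linear, and positive linear entries of $\Phi$, each in order of occurrence. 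A structural map of signed lists restricts on the negative nonlinear entries to a structural map in the cartesian sense (this is precisely what the fibre condition permits) and on all other entries to a permutation, hence acts on $(F\cP)(\Phi)$ through the cartesian and symmetric structural actions of $\cP$. Put $1^{F\cP}_R$ to be $1_X\in\nonlinhom\cP(X;X)$ or $1_A\in\linhom\cP(|A;A)$ as $R$ is nonlinear or linear; both lie in $(F\cP)(R^-,R^+)$. Finally an entries-only composition $\circ_K\colon(F\cP)(K\D,\Psi)\times(F\cP)(\Phi,K)\to(F\cP)(\Phi,\Psi)$ — whose output list is admissible by the lemma above — becomes, after using permutation-equivariance to move $K$ to the relevant end of each input list, one of the composites $\circ_A$ or $\circ_X$ of $\cP$; its associativity, unitality, and input-swap symmetry then follow from the corresponding ``reasonable'' axioms of $\cP$. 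A functor $H\colon\cP\to\cQ$ acts on each hom-set of $F\cP$ as it does on the underlying $\nonlinhom{}$- or $\linhom{}$-set, so $F$ is a functor.

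\emph{From entries-only ones to \lnl polycategories.} Given an entries-only \lnl polycategory $\cQ$, let $G\cQ$ have the same objects and
\[ \nonlinhom{G\cQ}(\Theta;X):=\cQ(\Theta^-,X^+),\qquad \linhom{G\cQ}(\Theta|\Gamma;\Delta):=\cQ(\Theta^-,\Gamma^-,\Delta^+), \]
all of these lists being admissible. The cartesian action on $\Theta$ and the symmetric actions on $\Gamma,\Delta$ are instances of the entries-only structural actions (legal by the fibre condition), the identities $1_X,1_A$ are the entries-only identities, and the compositions $\circ_A,\circ_X$ required of an \lnl polycategory are instances of entries-only cut $\circ_K$ with $K=A^+$ and $K=X^+$ respectively, after a permutation aligning the lists. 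The \lnl polycategory axioms are then special cases of the entries-only ones, and entries-only functors transport verbatim, so $G$ is a functor. Then $GF$ and $FG$ are naturally isomorphic to the respective identities: for $\cP$ an \lnl polycategory, $\nonlinhom{GF\cP}(\Theta;X)=(F\cP)(\Theta^-,X^+)$ is literally $\nonlinhom\cP(\Theta;X)$ because the list $(\Theta^-,X^+)$ is already in occurrence order (similarly for $\linhom{}$, identities, and structural actions, while the compositions agree up to a fixed reordering of contexts), so $GF\cong\Id$; and for $\cQ$ entries-only, $(FG\cQ)(\Phi)=\cQ(\mathrm{s}(\Phi))$ where $\mathrm{s}(\Phi)$ regroups $\Phi$ into the blocks (negative nonlinear; positive nonlinear; negative linear; positive linear) preserving order inside each block, and the permutation $\Phi\toiso\mathrm{s}(\Phi)$ realizing this regrouping acts, through the structural permutation action of $\cQ$, as a bijection $(FG\cQ)(\Phi)\cong\cQ(\Phi)$ compatible with structural actions, identities, and composites (by equivariance and associativity of $\circ$) and natural in $\cQ$ (functors preserve the permutation action), so $FG\cong\Id$.

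I do not expect a genuine mathematical obstacle: both notions were designed to package the same data, so the work is bookkeeping — tracking list orders through the translation and matching the two families of associativity, unitality, and equivariance axioms (the many cases of which, as explained at the start of this section, are exactly what the entries-only reformulation is meant to tame). The one point worth a moment's care is to confirm that the input-swap symmetry built into entries-only cut is among the ``reasonable'' equivariances one assumes for $\circ_A$ and $\circ_X$ in an \lnl polycategory, so that $F\cP$ really does satisfy the entries-only axioms.
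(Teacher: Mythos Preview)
Your proposal is correct and follows essentially the same approach as the paper: both identify an entries-only hom-set $\cP(\Phi)$ with the corresponding $\nonlinhom\cP(\Theta;X)$ or $\linhom\cP(\Theta|\Gamma;\Delta)$ according to the shape of the admissible list, using structural permutations to reduce to a canonical ordering. The paper's proof is a brief sketch stating only this key observation and that ``the identities, compositions, and structural actions correspond,'' whereas you spell out both functors explicitly and verify the two round-trip isomorphisms; your caveat about the input-swap equivariance is well-placed but, as you suspect, it is among the implicit ``reasonable'' axioms.
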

\begin{proof}
  By structural permutations, the hom-sets of an entries-only \lnl polycategory are uniquely determined (up to isomorphism) by those of the form
  \begin{gather*}
    \cP(X_1^-,\dots,X_m^-,Y^+) \\
    \cP(X_1^-,\dots,X_m^-,A_1^-,\dots,A_n^-,B_1^+,\dots,B_p^+)
  \end{gather*}
  for nonlinear objects $X_i,Y$ and linear objects $A_j,B_k$.
  We can identify these with the hom-sets
  \begin{gather*}
    \Pnl(X_1,\dots,X_m;Y) \\
    \Pl(X_1,\dots,X_m|A_1,\dots,A_n;B_1,\dots,B_p)
  \end{gather*}
  in an ordinary \lnl polycategory, and the identities, compositions, and structural actions correspond.
\end{proof}

Of course, the 2-categorical structure of \lnlpoly that we defined in \cref{sec:lnl-polycategories} can also be transported across this equivalence.
A transformation between functors of entries-only \lnl polycategories thus has components $\al_X \in \cQ((H X)^-, (K X)^+)$ and $\al_A \in \cQ((H A)^- , (K A)^+)$ satisfying suitable axioms.

Henceforth, we will pass freely back and forth between the two definitions, using whichever notation for homsets is more convenient.
We can now define a general notion of universal morphism that encompasses all five cases described in \cref{sec:lnl-polycategories}.

\begin{defi}
  A morphism $f\in \cP(\Phi,K)$ in an entries-only \lnl polycategory is \textbf{universal in $K$} if for any list of signed objects $\Psi$ such that $(K\D,\Psi)$ is admissible, the composition map $(-\circ_K f) : \cP(K\D,\Psi) \to \cP(\Phi,\Psi)$ is bijective, i.e.\ for any $h\in \cP(\Phi,\Psi)$ there exists a unique $g\in \cP(K\D,\Psi)$ such that $g\circ_K f = h$.
\end{defi}

In fact, following~\cite{hermida:fib-multi,lsr:multi,bz:bifib-poly}, it is useful to generalize from \emph{universal} morphisms in one multi- or poly-category to \emph{cartesian} ones relative to a functor.

\begin{defi}\label{defn:cartesian}
  Given a functor $\pi:\cP\to\cQ$ of entries-only \lnl polycategories, a morphism $f\in \cP(\Phi,K)$ is \textbf{$\pi$-cartesian in $K$} if for any list of signed objects $\Psi$ of \cP such that $(K\D,\Psi)$ is admissible, the following square is a pullback:
  \begin{equation}
    \begin{tikzcd}[column sep=huge]
      \cP(K\D,\Psi) \ar[r,"-\circ_K f"] \ar[d,"\pi"'] & \cP(\Phi,\Psi) \ar[d,"\pi"]\\
      \cQ(\pi K\D, \pi\Psi) \ar[r,"-\circ_{(\pi K)} (\pi f)"'] & \cQ(\pi\Phi,\pi\Psi)
    \end{tikzcd}\label{eq:cartesian}
  \end{equation}
  In other words, for any $h\in \cP(\Phi,\Psi)$ and $\ell\in \cQ(\pi K\D,\pi \Psi)$ such that $\ell \circ_{\pi K} \pi f = \pi h$, there exists a unique $g\in \cP(K\D,\Psi)$ such that $g\circ_K f = h$ and $\pi g = \ell$.
\end{defi}

Note that if $\cQ$ is terminal, both sets on the bottom row of~\eqref{eq:cartesian} are singletons; so the square is a pullback just when the morphism on top is a bijection.
Thus, $f$ is universal in $K$ precisely when it is $\pi$-cartesian in $K$ for the unique functor $\pi : \cP\to \mathsc{lnlpoly}$ to the terminal object.

Cartesian morphisms specialize to various notions in the literature:
\begin{itemize}
\item For symmetric multicategories, cartesian morphisms with $K$ positive specialize to the ``strongly cocartesian'' morphisms of~\cite[Remarks 2.2(1)]{hermida:fib-multi}.
\item For cartesian multicategories, cartesian morphisms specialize to the cartesian and opcartesian morphisms of~\cite{lsr:multi}.
\item For symmetric polycategories, cartesian morphisms specialize to the cartesian and opcartesian morphisms of~\cite{bz:bifib-poly}.
\item For categories, cartesian morphisms specialize to the traditional notion of cartesian and opcartesian morphism.
\end{itemize}

\begin{exa}\label{eg:cart-cbpv-foc}
  Cartesian morphisms can express restricted universal properties.
  For instance, in \cref{defn:cartesian} let $\cQ = \mathsc{cbpv}$, and let $f\in \cP(X^-,A^+)$ for a nonlinear $X$ and linear $A$, with vertex $K = A^+$.
  Then the hom-set $\cQ(\pi K\D,\pi\Psi)$ is empty unless $\Psi$ contains exactly one positive linear object and the rest nonlinear.
  Thus, $f$ is cartesian just when it exhibits $A$ as $\foc X$ with the universal property of~\eqref{eq:cbpv-foc}.  
\end{exa}

\begin{exa}\label{eg:smadj}
  Cartesian morphisms can also express adjunctions that behave similarly to $\foc\adj\uoc$ but stay inside the linear or nonlinear world.
  For instance, let $\mathsc{smadj}$ be the \lnl multicategory with two objects $\mathsc{p},\mathsc{n}$, both linear, a unique morphism $\Gamma \to \mathsc{p}$ when $\Gamma$ consists entirely of $\mathsc{p}$'s, and a unique morphism $\Gamma \to \mathsc{n}$ for any $\Gamma$.
  Then an object \cP of $\lnlpoly/\mathsc{smadj}$ is a symmetric multicategory with a partition of its objects into ``positive'' and ``negative'' ones, such that any morphism with a negative object in its domain has a negative codomain.
  Suppose in addition that
  \begin{itemize}
  \item For any positive object $A$, there is a negative object $B$ and a morphism $A\to B$ that is cartesian in $B$ over the unique morphism $\mathsc{p}\to \mathsc{n}$ in $\mathsc{smadj}$.
  \item For any negative object $B$, there is a positive object $A$ and a morphism $A\to B$ that is cartesian in $A$ over the unique morphism $\mathsc{p}\to \mathsc{n}$ in $\mathsc{smadj}$.
  \end{itemize}
  By an argument like that of \cref{thm:lnl-adj}, such a \cP is uniquely determined by an adjunction of symmetric multicategories.
  Further cartesian liftings can specialize this to an adjunction of symmetric monoidal categories, with strong left adjoint and lax right adjoint.
\end{exa}

\begin{exa}\label{eg:adj}
  As an even simpler example, let $\mathsc{adj}$ have two linear objects $\mathsf{p}, \mathsc{n}$ and only one nonidentity morphism $\mathsc{p}\to \mathsc{n}$.
  Then an object of $\lnlpoly/\mathsc{adj}$ is an ordinary category with its objects partitioned into positive and negative ones, such that there are no morphisms from a negative object to a positive one.
  Such a category is precisely the ``collage'' of a profunctor between the categories \cP and \cN of positive and negative objects.
  If all cartesian liftings of the morphism $\mathsc{p}\to \mathsc{n}$ exist in one direction, then the profunctor is representable by a functor $\cP\to \cN$; if they exist in the other direction, it is representable by a functor $\cN\to\cP$; and if both exist, it is representable by an adjunction $\cP \toot \cN$.
\end{exa}

As an example of the value of the entries-only framework, we can now prove (a generalization of) \cref{thm:univ-comp} without a division into 25-odd cases:

\begin{prop}\label{thm:univ-comp2}
  Given $\pi:\cP\to\cQ$, if $f\in \cP(\Phi_1,K)$ is $\pi$-cartesian in $K$ and $g\in\cP(K\D,\Phi_2,L)$ is $\pi$-cartesian in $L$, then their composite $g\circ_K f\in \cP(\Phi_1,\Phi_2,L)$ is $\pi$-cartesian in $L$.
\end{prop}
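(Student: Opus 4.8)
The plan is to realize the pullback square that defines ``$g\circ_K f$ is $\pi$-cartesian in $L$'' as a horizontal pasting of the two pullback squares expressing that $f$ is $\pi$-cartesian in $K$ and that $g$ is $\pi$-cartesian in $L$, and then appeal to the pasting law for pullbacks.

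First I would fix a list of signed objects $\Psi$ with $(L\D,\Psi)$ admissible and write down the square of \cref{defn:cartesian} for $g\circ_K f$ at the vertex $L$ and test list $\Psi$. Using associativity of composition in $\cP$, every $h\in\cP(L\D,\Psi)$ satisfies $h\circ_L(g\circ_K f)=(h\circ_L g)\circ_K f$, so the top edge of that square factors as
\[ \cP(L\D,\Psi)\xrightarrow{\;-\circ_L g\;}\cP(K\D,\Phi_2,\Psi)\xrightarrow{\;-\circ_K f\;}\cP(\Phi_1,\Phi_2,\Psi); \]
since $\pi$ preserves composites and composition in $\cQ$ is likewise associative, the bottom edge factors through $\cQ(\pi K\D,\pi\Phi_2,\pi\Psi)$ in the corresponding way. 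Thus the square in question is the outer rectangle of a diagram made of a ``left'' square with top edge $-\circ_L g$ and a ``right'' square with top edge $-\circ_K f$, both with vertical legs given by $\pi$.

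Next I would identify the left square with the instance of \cref{defn:cartesian} for $g$ (cartesian in $L$) at test list $\Psi$, which is legitimate since $(L\D,\Psi)$ is admissible, and the right square with the instance of \cref{defn:cartesian} for $f$ (cartesian in $K$) at test list $(\Phi_2,\Psi)$; for the latter I need $(K\D,\Phi_2,\Psi)$ to be admissible, which follows from the preceding lemma applied to the admissible lists $(K\D,\Phi_2,L)$ (the entries of $g$) and $(L\D,\Psi)$. Both squares are therefore pullbacks, so the outer rectangle is a pullback by the pasting law, which is exactly the assertion that $g\circ_K f$ is $\pi$-cartesian in $L$. The only things demanding care are keeping track of which composition slot each $\circ$ acts in and checking admissibility of the intermediate list; equivariance under structural permutations makes the ordering conventions align, and no genuine obstacle arises — this is precisely the simplification promised by the entries-only formalism, replacing the roughly twenty-five cases of a direct argument with a single pasting of pullbacks.
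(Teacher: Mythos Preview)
Your proposal is correct and follows essentially the same approach as the paper: the paper's proof is precisely the pasting of the two pullback squares you describe, concluding that the outer rectangle is a pullback. You have simply filled in the details (associativity, admissibility of the intermediate list via the earlier lemma) that the paper leaves implicit.
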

\begin{proof}
  In the following diagram:
  \[
    \begin{tikzcd}[column sep=huge]
      \cP(L\D,\Psi) \ar[r,"-\circ_L g"] \ar[d,"\pi"'] &
      \cP(K\D,\Phi_2,\Psi) \ar[r,"-\circ_K f"] \ar[d,"\pi"'] &
      \cP(\Phi_1,\Phi_2,\Psi) \ar[d,"\pi"]\\
      \cQ(\pi L\D, \pi\Psi) \ar[r,"-\circ_{(\pi L)} (\pi g)"'] &
      \cQ(\pi K\D,\pi\Phi_2,\pi\Psi) \ar[r,"-\circ_{(\pi K)} (\pi f)"'] &
      \cQ(\pi \Phi_1,\pi\Phi_2,\pi\Psi)
    \end{tikzcd}
  \]
  both squares are pullbacks since $f$ and $g$ are $\pi$-cartesian, hence so is the rectangle.
\end{proof}

Following~\cite{lsr:multi,bz:bifib-poly}, we define:

\begin{defi}\label{def:bifibration}
  A functor $\pi:\cP\to\cQ$ is a \textbf{bifibration} if for any list $\Phi$ of signed objects in \cP and any morphism $g\in \cQ(\pi\Phi,L)$ there exists a $\pi$-cartesian morphism $f\in \cP(\Phi,K)$ such that $\pi(f) = g$.
\end{defi}

When $\cQ$ is one of our distinguished subterminal objects (including the terminal object $\mathsc{lnlpoly}$), bifibrations $\pi:\cP\to\cQ$ reduce to more familiar structures:

\begin{table}
  \centering
  \begin{tabular}{c|c|c}
    Subterminal $\cS$ & Universal properties & Equivalent structure \\\hline
    $\mathsc{lnlpoly}$ & $\times,1,\to,\ten,\unit,\duals,\foc,\uoc$ & $\ast$-autonomous closed \lnl adjunction\\\hline
    $\mathsc{lnlmulti}$ & $\times,1,\to,\ten,\unit,\hom,\foc,\uoc$ & closed \lnl adjunction\\\hline
    $\mathsc{sympoly}$ & $\ten,\unit,\duals$ & $\ast$-autonomous category\\\hline
    $\mathsc{symmulti}$ & $\ten,\unit,\hom$ & closed symmetric monoidal category\\\hline
    $\mathsc{cartmulti}$ & $\times,1,\to$ & cartesian closed category\\\hline
    $\mathsc{cbpv}$ & $\times,1,\to,\mixedhom,\eechom,\rtimes,\unit^\dagger,\foc^\dagger,\uoc$ & structure of \cref{thm:cbpv-pow}
  \end{tabular}~\\[1ex]
  \raggedright
  \small $^\dagger$ with restricted universal property.
  \\[1ex]
  \caption{Bifibrations over subterminals}
  \label{tab:bifib}
\end{table}

\begin{thm}\label{thm:rep}\label{thm:slice-bifib}\label{eg:cbpv-slice}
  For each row in \cref{tab:bifib}, with subterminal object $\cS$ listed in the first column, the following structures are equivalent:
  \begin{enumerate}
  \item A bifibration $\pi:\cP\to \cS$.\label{item:rep1}
  \item An object of $\lnlpoly/\cS$ with the universal properties in the second column.\label{item:rep3}
  \item The categorical structure indicated in the third column.\label{item:rep4}
  \end{enumerate}
\end{thm}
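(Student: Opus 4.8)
The statement packages three separate matters, and I would handle them separately. The equivalence (ii)$\Leftrightarrow$(iii) is an assembly of results already proved: by \cref{rmk:slice} (and, for the last row, the remark introducing $\mathsc{cbpv}$) an object of $\lnlpoly/\cS$ is respectively a symmetric polycategory, a symmetric multicategory, a cartesian multicategory, an \lnl multicategory, a bare \lnl polycategory, or a linearly subunary \lnl multicategory; imposing the universal properties of the second column then yields the structures of the third column by the $\ast$-autonomy discussion of \cref{sec:lnl-polycategories} (with \cite{cs:wkdistrib,bz:bifib-poly} supplying the $\Theta=\emptyset$ case), by \cref{thm:cartprod} together with the internal-hom discussion, by \cref{thm:lnl-adj}, by \cref{thm:ldc-adj}, and by \cref{rmk:cbpv} with \cref{thm:cbpv-pow}. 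The genuinely new statement is (i)$\Leftrightarrow$(ii).

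The engine for (i)$\Leftrightarrow$(ii) is the observation that ``$\pi$-cartesian'' degenerates when the base is subterminal. Concretely: for a functor $\pi:\cP\to\cS$ with $\cS$ subterminal and $f\in\cP(\Phi,K)$, the morphism $f$ is $\pi$-cartesian in $K$ if and only if $(-\circ_K f):\cP(K\D,\Psi)\to\cP(\Phi,\Psi)$ is a bijection for every admissible $(K\D,\Psi)$ with $\cS(\pi K\D,\pi\Psi)\neq\emptyset$. Indeed, when $\cS(\pi K\D,\pi\Psi)=\emptyset$ both hom-sets in the top row of \eqref{eq:cartesian} are empty by functoriality of $\pi$, so the square is trivially a pullback; and when $\cS(\pi K\D,\pi\Psi)\neq\emptyset$ the whole bottom row of \eqref{eq:cartesian} consists of singletons, so the square is a pullback exactly when its top edge is a bijection, just as in the remark after \cref{defn:cartesian}. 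Thus ``$\pi$-cartesian over $\cS$'' reads as ``universal in $K$, but only over the arities that $\cS$ sanctions'' --- precisely the reading of the restricted universal property \eqref{eq:cbpv-foc} made in \cref{eg:cart-cbpv-foc}, and of the restricted $\top,0$ in \cref{fig:univprop}. Taking $\cS$ terminal in \cref{rmk:slice} also shows that a functor $\cP\to\cS$ is the same as an object of $\lnlpoly/\cS$, so (i) already carries (ii)'s object along with it.

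With this, (i)$\Rightarrow$(ii) is immediate: each universal property listed in the second column corresponds to a ``shape'' $(\Phi,L)$ --- a list $\Phi$ of objects of $\cP$ and a signed object $L$ with $\cS(\pi\Phi,L)$ a singleton --- whose $\pi$-cartesian lifts are exactly the (possibly arity-restricted) universal morphisms of that kind, and the bifibration condition produces them. For (ii)$\Rightarrow$(i) I would start from an arbitrary $g\in\cS(\pi\Phi,L)$ and write it, in the subterminal $\cS$, as a composite of the distinguished shapes above, modulo structural actions and composition with isomorphisms (which, by \cref{thm:univ-iso}, are universal and hence $\pi$-cartesian); I then lift each factor to the corresponding universal morphism in $\cP$ (available by hypothesis) and compose them, whereupon \cref{thm:univ-comp2} guarantees that the composite is $\pi$-cartesian, and it lies over $g$ by construction. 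The arity restrictions are no obstruction because \cref{thm:univ-comp2} is stated over an arbitrary base. What makes the second-column lists as short as they are in \cref{tab:bifib} is that the reductions already collected after \cref{thm:univ-comp} --- for example $\coten=\d{(\d A\ten\d B)}$, $\counit=\d\unit$, $A\hom B=\d A\coten B$ over $\mathsc{sympoly}$, and $X\mixedhom B=\foc X\hom B$, $\uoc A=\unit\eechom A$, $\foc X=X\rtimes\unit$ over $\mathsc{cbpv}$ --- express every distinguished shape in terms of that short list.

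The main obstacle is the verification, carried out separately for each of the six rows, that the second-column list really is a complete and irredundant generating set for the morphisms of $\cS$ --- so that, in the preceding paragraph, every $g$ does decompose into the listed shapes and none of the listed universal properties is superfluous. This is not conceptually deep, but it is a real finite case analysis: it is the one-level-up analogue of the roughly twenty-five cases of \cref{thm:univ-comp} that the entries-only formalism was designed to tame, and it is organized by \cref{fig:univprop} and \cref{tab:bifib} together with the reduction formulas following \cref{thm:univ-comp}. The single most delicate point is the bookkeeping for $\cS=\mathsc{cbpv}$, namely tracking which of $\foc,\unit,\with,\top,\oplus,0,+,\varnothing$ come out with a restricted universal property and which do not; \cref{eg:cart-cbpv-foc} is the template for that.
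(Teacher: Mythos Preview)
Your proof is correct and follows the same strategy as the paper. For (ii)$\Rightarrow$(i) the paper is slightly more concrete: instead of describing a decompose-and-lift process, it records the closed-form result of that process in each of the five entries-only cases --- e.g.\ a cartesian $\psi\in\Pl(\Theta|\Gamma;\Delta,\underline{A})$ is produced by taking $A = \bigtimes\Theta \rtimes \bigotimes(\Gamma,\d\Delta)$, and a cartesian $\psi\in\Pl(\Theta,\underline{X}|\Gamma;\Delta)$ by taking $X = \bigtimes\Theta \to (\bigotimes\Gamma \eechom \bigparr\Delta)$ --- and then checks row by row that the operations occurring in these formulas collapse to the second-column list. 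Your compositional phrasing via \cref{thm:univ-comp2} and the reduction identities after \cref{thm:univ-comp} is exactly what manufactures these formulas; the paper just writes down the output.

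One small imprecision: when $\cS(\pi K\D,\pi\Psi)=\emptyset$, functoriality of $\pi$ gives only $\cP(K\D,\Psi)=\emptyset$; it does not by itself force $\cP(\Phi,\Psi)=\emptyset$. But you do not need the stronger claim --- a commutative square with empty lower-left corner is a pullback precisely when its upper-left corner is empty, which you have. (For the particular subterminals in \cref{tab:bifib} the stronger claim does hold, but establishing it is part of the arity bookkeeping you defer to your final paragraph, not a consequence of functoriality alone.)
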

\begin{proof}
  Clearly \ref{item:rep1}$\Rightarrow$\ref{item:rep3}, while~\ref{item:rep3}$\Leftrightarrow$\ref{item:rep4} follows from \cref{sec:relation-literature}.
  The remaining direction \ref{item:rep3}$\Rightarrow$\ref{item:rep1} is similar to the universal characterization of $\ast$-autonomous categories in~\cite{bz:bifib-poly}.
  By $\bigtimes\Theta$, $\bigotimes\Gamma$, or $\bigparr\Delta$ we mean the result of combining all the objects in a list with the given binary operation; if the list contains only one object the result is that object (in which case the binary operation doesn't even need to exist), while if the list is empty the result is the corresponding nullary operation $1$, $\unit$, or $\counit$.
  Now we construct the five possible types of morphism universal in $X$ or $A$ as follows:
  \begin{itemize}
  \item For $\psi\in\Pnl(\Theta;X)$ we take $X = \bigtimes \Theta$.
  \item For $\psi\in\Pnl(\Theta,X;Y)$ we take $X = \bigtimes\Theta\to Y$.
  \item For $\psi \in \Pl(\Theta,X|\Gamma;\Delta)$ we take $X = \bigtimes\Theta \to (\bigotimes\Gamma\eechom \bigparr\Delta)$.
  \item For $\psi \in \Pl(\Theta|\Gamma;\Delta,A)$ we take $A = \bigtimes\Theta \rtimes \bigotimes (\Gamma,\d\Delta)$.
  \item For $\psi \in \Pl(\Theta|\Gamma,A;\Delta)$ we take $A = \bigtimes\Theta \mixedhom \bigparr(\d\Gamma,\Delta)$.
  \end{itemize}
  We leave it to the reader to check that whenever a particular type of universal morphism exists in one of our subterminals $\cS$, the requisite universal operations are among those assumed by~\ref{item:rep3} or can be constructed from them.
  (When $\cS = \mathsc{cbpv}$, we discussed the restricted universal property of $\foc$ in \cref{eg:cart-cbpv-foc}.)
\end{proof}

\begin{defi}\label{defn:birep}
  If \cQ is a fixed object such as those in \cref{tab:bifib} (or more generally \cref{tab:subterms}), we refer to an object $\cP \in \lnlpoly/\cQ$ as \textbf{birepresentable} if the map $\pi :\cP\to\cQ$ is a bifibration.
\end{defi}

For instance, a birepresentable \lnl polycategory is a $\ast$-autonomous closed \lnl adjunction, a birepresentable symmetric polycategory is a $\ast$-autonomous category, a birepresentable cartesian multicategory is a cartesian closed category, and so on.\footnote{In the literature, sometimes ``representable'' means only that ``covariant'' universal arrows exist, e.g.\ a ``representable symmetric multicategory'' is a not-necessarily-closed symmetric monoidal category.  But other times it means that all universal arrows exist, e.g.\ a ``representable polycategory'' is a $\ast$-autonomous category.  Our ``birepresentable'', in analogy to ``bifibration'', avoids ambiguity.}

Similarly, we can define a general notion of limit that encompasses all four cases.
In fact, we can define a general notion that encompasses both universal morphisms \emph{and} (weighted) limits and colimits!

\begin{defi}\label{defn:cones}
  An \textbf{abstract cone} is a small entries-only \lnl polycategory $\cC$ equipped with a specified signed object $K$ called the \textbf{vertex}, such that $\cC(\Phi)$ is empty if $\Phi$ contains any copies of $K\D$ or contains more than one copy of $K$, except that $\cC(K\D,K)=\{1_K\}$.
  Nonidentity morphisms containing $K$ (necessarily exactly once) are called \textbf{abstract projections}, while morphisms not containing $K$ are called \textbf{abstract transitions}.
  Note that no two abstract projections can be composable.
  The \textbf{reduct} of an abstract cone is its sub-\lnl-polycategory obtained by removing the underlying object of $K$, its identity morphism, and all the abstract projections; we denote this by $\reduct{\cC}$.

  An \textbf{expansion} of an abstract cone \cC is determined by a finite number of new objects (each linear or nonlinear) and a sign for each of them, yielding a signed list $\Psi$, such that $(K\D,\Psi)$ is admissible (where $K$ is the vertex of \cC).
  The expansion itself is an entries-only \lnl polycategory denoted $\expand{\cC}{\Psi}$ (which is not itself an abstract cone) obtained by adding the new objects to \cC along with one new morphism $\ftil\in \expand{\cC}{\Psi}(\Phi,\Psi)$ for each abstract projection $f\in \cC(\Phi,K)$, called the \textbf{expanders}, and an additional new morphism $\chi \in \expand{\cC}{\Psi}(K\D,\Psi)$ called the \textbf{factorization}.
  Composition is defined by $\chi \circ_K f = \ftil$, and by $\ftil \circ g = \widetilde{f\circ g}$ when $g$ is an abstract transition.
  The corresponding \textbf{pre-expansion} is the sub-\lnl-polycategory $\preexpand{\cC}{\Psi}\subseteq \expand{\cC}{\Psi}$ obtained by omitting the morphism $\chi$.
  Note that we have inclusions
  \[ \reduct{\cC} \;\subseteq\; \cC \;\subseteq\; \preexpand{\cC}{\Psi} \;\subseteq\; \expand{\cC}{\Psi}. \]
\end{defi}

\begin{defi}
  By a \textbf{concrete cone} we mean a functor whose domain is an abstract cone.
  Let $\pi :\cP\to \cQ$ a functor of (entries-only) \lnl polycategories, and $G:\cC\to \cP$ a concrete cone.
  We say that $G$ is \textbf{$\pi$-extremal} if for any expansion $\expand{\cC}{\Psi}$ of \cC, any commutative square as shown below such that the composite $\cC\to\preexpand{\cC}{\Psi}\to\cP$ is $G$ has a unique diagonal filler.
  \[
    \begin{tikzcd}
      \cC \ar[r] \ar[rr,bend left,"G"] & \preexpand{\cC}{\Psi} \ar[r] \ar[d] & \cP \ar[d,"\pi"] \\
      & \expand{\cC}{\Psi} \ar[r] \ar[ur,dashed,"\exists!"] & \cQ
    \end{tikzcd}
  \]
  If $\cQ=\mathsc{lnlpoly}$ is terminal, instead of $\pi$-extremal we say that $G$ is \textbf{universal}.
\end{defi}

We will be primarily interested in two important classes of abstract cones, which show respectively that the notion of extremal cone includes both cartesian/universal morphisms and limits and colimits.
Here is the first.

\begin{defi}\label{eg:extremal-univ}
  Let $\Phi$ be a finite list of abstract objects and let $K$ be an additional abstract object, such that $K$ and each object of $\Phi$ is either linear or nonlinear and has a chosen sign.
  Let $\cartcone{\Phi}{K}$ be the \lnl polycategory whose objects are those of $\Phi$ and $K$ and having precisely one nonidentity morphism $f\in \cartcone{\Phi}{K}(\Phi,K)$.
  This is an abstract cone with vertex $K$; we call it the \textbf{abstract cartesianness cone} determined by $\Phi$ and $K$.
\end{defi}

Observe that a concrete cone $G:\cartcone{\Phi}{K}\to \cP$ is determined by a single morphism $Gf\in \cP(G\Phi,GK)$.

\begin{prop}\label{thm:extremal-cartesian}
  For any $\phi:\cP\to\cQ$, a concrete cone $G:\cartcone{\Phi}{K}\to \cP$ is $\pi$-extremal if and only if $Gf$ is $\pi$-cartesian in $K$.
\end{prop}
\begin{proof}
  Because there is exactly one abstract projection $f$ in $\cartcone{\Phi}{K}$, an extension of a functor $G:\cC\to \cP$ to some pre-expansion $\preexpandp{\cartcone{\Phi}{K}}{\Psi}$ is uniquely determined by a list of signed objects $\Psi$ in \cP such that $(GK\D,\Psi)$ is admissible, together with a morphism $\ftil \in \cP(G\Phi,\Psi)$.
  A further extension of this to the expansion $\expandp{\cartcone{\Phi}{K}}{\Psi}$ consists of a morphism $\chi \in \cP(GK\D,\Psi)$ such that $\chi \circ Gf = \ftil$.
  Applying these characterizations to \cQ as well, we see that $G$ is $\pi$-extremal if and only if
  \begin{quote}
    For any list of signed objects $\Psi$ in \cP such that $(GK\D,\Psi)$ is admissible, any morphism $\ftil \in \cP(G\Phi,\Psi)$, and any morphism $\xi \in \cQ(\pi G K\D, \pi\Psi)$ such that $\xi \circ \pi G f = \pi \ftil$, there exists a unique morphism $\chi \in \cP(GK\D,\Psi)$ such that $\chi \circ Gf = \ftil$ and $\pi(\chi) = \xi$.
  \end{quote}
  However, this is also exactly what it means for~\eqref{eq:cartesian} (with $f$ replaced by $Gf$) to be a pullback of sets, which is the definition of when $Gf$ is $\pi$-cartesian in $K$.
\end{proof}

Our second important class of abstract cones is the following.

\begin{defi}\label{eg:extremal-limit}
  Let \cA be an ordinary small category, and let $\newterm{\cA}$ denote the result of adjoining a new terminal object $T$.
  If we make $\newterm{\cA}$ an \lnl polycategory by declaring all objects to be linear, it becomes an abstract cone with vertex $T^+$.
  We denote this by $\colimconel{\cA}$ and call it the \textbf{abstract linear colimit cone} determined by $\cA$.

  Dually, if $\newinit{\cA}$ denotes the result of adjoining a new initial object $I$, then with all objects linear it yields an abstract cone with vertex $I^-$.
  We denote this by $\limconel{\cA}$ and call it an \textbf{abstract linear limit cone}.

  Similarly, by declaring all the objects to be nonlinear, we obtain \textbf{abstract nonlinear colimit cones} $\colimconenl{\cA}$ and \textbf{abstract nonlinear limit cones} $\limconenl{\cA}$.
\end{defi}

Observe that a concrete cone $G:\colimconel{\cA} \to \cP$ is determined by a cocone under a $\cA$-shaped diagram in the category of linear objects of \cP, and similarly in the other cases.

\begin{prop}\label{thm:extremal-colimit}\
  \begin{enumerate}
  \item A concrete cone $G:\colimconel{\cA} \to \cP$ is universal if and only if the corresponding cocone is a colimit, in the strong sense of~\eqref{eq:lcolim}.\label{item:ec1}
  \item A concrete cone $G:\limconel{\cA} \to \cP$ is universal if and only if the corresponding cocone is a limit, in the strong sense of~\eqref{eq:llim}.
  \item A concrete cone $G:\colimconenl{\cA} \to \cP$ is universal if and only if the corresponding cocone is a colimit, in the strong sense of~\eqref{eq:nlcolim1}--\eqref{eq:nlcolim2}.
  \item A concrete cone $G:\limconenl{\cA} \to \cP$ is universal if and only if the corresponding cocone is a limit in the sense of~\eqref{eq:nllim}.
  \end{enumerate}
\end{prop}
\begin{proof}
  We prove~\ref{item:ec1}; the others are analogous.
  Because the vertex $T^+$ of $\colimconel{\cA}$ is linear and positive, $(T^-,\Psi)$ is admissible just when $\Psi$ contains no positive nonlinear objects.
  An extension of $G:\colimconel{\cA} \to \cP$ to some pre-expansion $\preexpandp{\colimconel{\cA}}{\Psi}$ thus consists of a list $\Theta$ of nonlinear objects of \cP, lists $\Gamma$ and $\Delta$ of linear objects of \cP, and a morphism $\ftil_i\in \Pl(\Theta | \Gamma,G A_i ; \Delta)$ for each object $A_i\in \cA$, such that $\ftil_i \circ G g = \ftil_j$ for each morphism $g : A_j\to A_i$ in \cA.
  This is precisely an element of $\lim_i \Pl(\Theta|\Gamma,A_i;\Delta)$, the right-hand side of~\eqref{eq:lcolim}.

  A further extension to the expansion $\expandp{\colimconel{\cA}}{\Psi}$ is then determined by a morphism $\chi \in \Pl(\Theta | \Gamma,G T ; \Delta)$ such that $\chi\circ_{GT} f_i = \ftil_i$ for all $A_i\in \cA$.
  To say that there is a unique such morphism is thus precisely to say that the natural map from left-to-right in~\eqref{eq:lcolim} is a bijection.
\end{proof}

\begin{defi}
  If $H:\cC\to\cQ$ is a concrete cone, we say that $\pi:\cP\to\cQ$ \textbf{has extremal lifts of $H$} if for any lift $G:\reduct{\cC}\to \cP$ of the reduct of $\cC$ to $\cP$, there exists a compatible lift of $H$ that is $\pi$-extremal:
  \[
    \begin{tikzcd}[column sep=large]
      \reduct{\cC} \ar[r,"G"] \ar[d] & \cP \ar[d,"\pi"]\\
      \cC \ar[r,"H"'] \ar[ur,dashed,"\pi\text{-}\mathrm{ext}" description] & \cQ
    \end{tikzcd}
  \]
\end{defi}

\begin{exa}\label{eg:extremal-bifib}
  By \cref{thm:extremal-cartesian}, $\pi$ is a bifibration if and only if it has extremal lifts of all the abstract cartesianness cones from \cref{eg:extremal-univ}.
\end{exa}

\begin{defi}\label{defn:bicomplete}
  We say that an \lnl polycategory is \textbf{bicomplete} if its unique map to the terminal object has extremal lifts of all concrete cones for the abstract limit and colimit cones from \cref{eg:extremal-limit} (where \cA is small).
\end{defi}

By \cref{thm:extremal-colimit}, bicompleteness is equivalent to having all small limits and colimits of both kinds of objects, in the sense described in \cref{sec:lnl-polycategories}.

As pointed out by a referee, the generalization of \cref{defn:bicomplete} to a relative notion over an arbitrary base \cQ is a little subtle: there are at least two natural-seeming possibilities.

\begin{defi}
  Let $\pi:\cP\to\cQ$ be a functor of \lnl polycategories.
  \begin{enumerate}
  \item We say $\pi$ is \textbf{relatively bicomplete} if it has extremal lifts of all concrete cones $H:\cC\to\cQ$ where \cC is one of the abstract cones from \cref{eg:extremal-limit} (where \cA is small).
  \item We say $\pi$ is \textbf{fiberwise bicomplete} if it has extremal lifts only of such cones that have the additional property that $H$ factors through the terminal object (equivalently, its image contains only identity maps).
  \end{enumerate}
\end{defi}

The two coincide in the ``absolute'' case when \cQ is terminal, or more generally when it satisfies the following condition.

\begin{prop}\label{thm:rel-fib-bicomplete}
  If \cQ contains no nonidentity unary co-unary morphisms between two objects of the same sort (linear or nonlinear), then a functor $\pi:\cP\to\cQ$ is relatively bicomplete if and only if it is fiberwise bicomplete.
  In particular, this is the case when \cQ is subterminal.\qed
\end{prop}

\begin{exa}\label{eg:limits-slice}
  As noted in \cref{sec:lnl-polycategories}, an \lnl multicategory cannot have a terminal linear object or an initial linear or nonlinear object when considered as an \lnl polycategory.
  However, while a concrete cone $G:\cC\to\cP$ of such a shape in an \lnl multicategory cannot be universal, it can be $\pi$-extremal for the unique functor $\pi:\cP\to\mathsc{lnlmulti}$ (see \cref{rmk:slice}).
  This yields the correct ``modified'' notion of initial and terminal object in an \lnl multicategory as discussed in \cref{sec:lnl-polycategories}, since not all expansions of this cone factor through $\mathsc{lnlmulti}$.
  Since $\mathsc{lnlmulti}$ is subterminal, \cref{thm:rel-fib-bicomplete} applies to \lnl multicategories, so there is no ambiguity in the correct notion of ``bicomplete \lnl multicategory''.

  Similarly, we obtain the correct notions of limit and colimit for symmetric polycategories, cartesian multicategories, symmetric multicategories, and CBPV pre-structures.
  The non-subterminals from \cref{rmk:planar,rmk:double-split} also satisfy the condition of \cref{thm:rel-fib-bicomplete}, so there is no ambiguity in their correct notion of bicompleteness either.
\end{exa}

The potential difference between relative and fiberwise bicompleteness can be attributed to the fact that \cref{eg:extremal-univ,eg:extremal-limit} overlap.
Specifically, the abstract cartesianness cone $\cartcone{\Phi}{K}$ when $\Phi$ is a single object of the same sort and opposite sign as $K$ coincides with an abstract limit or colimit cone where \cA is the terminal category.
In the absolute case, this is a universal unary co-unary morphism between objects of the same sort, as in \cref{thm:univ-iso}, or equivalently a limit or colimit of a single object, which is trivial.
But if $\pi:\cP\to\cQ$ has extremal lifts for these unary co-unary cones, then its underlying ordinary functors between categories of linear and nonlinear objects are each both a fibration and opfibration, in the classical Grothendieck sense.

\begin{exa}
  The non-subterminal $\cQ=\mathsc{smadj}$ from \cref{eg:smadj} contains a nonidentity morphism $\mathsc{p}\to \mathsc{n}$ between linear objects.
  Thus, while a fiberwise bicomplete object of $\lnlpoly/\mathsc{smadj}$ contains only limits and colimits of positive and negative objects individually, a relatively bicomplete one also includes the cartesian lifts mentioned in \cref{eg:smadj} that make it an adjunction of symmetric multicategories.
\end{exa}

Since these adjoint functors relating positive and negative objects are analogous to the exponential modalities relating linear and nonlinear objects, and do not intuitively look like a sort of ``limit'', it is natural to view them as belonging to birepresentability and \emph{not} to ``completeness''.
As pointed out by the referee, this argues for fiberwise bicompleteness as the correct notion of ``bicompleteness'' for general base objects \cQ.

Our general notion of ``extremal cone'' also includes examples that don't fall into either \cref{eg:extremal-univ} or \cref{eg:extremal-limit}.
However, our main purpose in introducing it is to give a common language to talk about these two examples.
To this end, we note that together these two examples suffice to reconstruct all extremal cones.

\begin{thm}\label{thm:all-limits}
  For any functor $\pi:\cP\to\cQ$ of \lnl polycategories, the following are equivalent.
  \begin{enumerate}
  \item \cP has an extremal lift of any concrete cone $H:\cC\to\cQ$ (with \cC small).\label{item:al2}
  \item \cP is a relatively bicomplete bifibration.\label{item:al1}
  \item \cP is a fiberwise bicomplete bifibration.\label{item:al1a}
  \end{enumerate}
\end{thm}
\begin{proof}
  \cref{eg:extremal-bifib,defn:bicomplete} show that~\ref{item:al2}$\Rightarrow$\ref{item:al1}, and clearly~\ref{item:al1}$\Rightarrow$\ref{item:al1a}.
  So let us assume~\ref{item:al1a}, and let $H:\cC\to\cQ$ be a cone and $G:\reduct{\cC}\to \cP$ a lift of its reduct to \cP.
  For any abstract projection $f\in \cC(\Phi,K)$, let $\ftil\in\cP(G\Phi,K_f)$ be $\pi$-extremal in $K_f$ and such that $\pi(\ftil) = H(f)$ and hence $\pi(K_f) = H(K)$, where the sign and linearity of $K_f$ are the same as that of $K$.
  Such a morphism exists because $\pi$ is a bifibration.

  Now for any abstract transition $g\in \cC(\Psi,L)$ and any abstract projection $f\in \cC(L\D,\Phi,K)$ that it is composable with, producing an abstract projection $f\circ_L g \in \cC(\Psi,\Phi,K)$, the composite $\ftil \circ G g \in \cP(G\Psi,G\Phi,K_f)$ satisfies
  \[\pi(\ftil\circ G g) = \pi(\ftil) \circ \pi(G g) = H(f) \circ H(g) = H(f\circ g) .\]
  Thus, by the universal property of $\widetilde{f\circ_L g} \in \cP(G\Psi,G\Phi,K_{f\circ_L g})$ it induces a unique morphism $\gtil \in \cP(K_{f\circ_L g}\D,K_f)$ such that $\pi(\gtil) = 1_K$.

  Now these objects $K_f$ and morphisms $\gtil$ form a small diagram of objects of \cP (linear or nonlinear according as $K$ is such) lying in the fiber over $K$.
  In particular, therefore, the image of this diagram under $\pi$ admits a specified cone (if $K$ is negative) or cocone (if $K$ is positive) with vertex $H(K)$, consisting entirely of identity maps.
  Thus, since $\pi$ is fiberwise bicomplete, this cone of identity maps has a $\pi$-extremal lift.
  Composing the projections of this lift with the morphisms $\ftil$ yields a $\pi$-extremal concrete cone $\cC\to\cP$ extending $G$ and lifting $H$.
\end{proof}

Of course, there are analogous results in which set-theoretic size of the limits and colimits and of the abstract cones are limited in chosen ways.
We also have a version of \cref{thm:univ-uniq} and its converse.

\begin{prop}\label{thm:extremal-uniq}
  Given $\pi:\cP\to\cQ$ and an abstract cone \cC with vertex $K$, if $F,G:\cC \to \cP$ coincide on the reduct $\reduct{\cC}$ and are both $\pi$-extremal, then there is a unique isomorphism $\phi:F(K) \cong G(K)$ such that $\pi(\phi)$ is an identity and such that $\phi \circ_K F(f) = G(f)$ for all abstract projections $f$ in \cC.\qed
\end{prop}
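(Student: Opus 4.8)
The plan is to run the familiar ``unique up to unique isomorphism'' argument (cf.\ \cref{thm:univ-uniq}) through the expansion machinery of \cref{defn:cones}. Throughout I would use the expansion $\cC_\Omega$ of $\cC$ in which $\Omega$ consists of a single new object whose sign and linearity match those of the vertex $K$; since $(K\D,\Omega)$ is then admissible this is a legitimate expansion, with one expander $\ftil\in\cC_\Omega(\Phi,\Omega)$ for each abstract projection $f\in\cC(\Phi,K)$, a factorization $\chi\in\cC_\Omega(K\D,\Omega)$ satisfying $\chi\circ_K f=\ftil$, and pre-expansion $\cC_\Omega'$ obtained by deleting $\chi$.

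First I would build $\phi$. Because $F$ and $G$ agree on the reduct $\cC'$ and $G$ is a functor, the assignment that extends $F|_\cC$ by sending the new object of $\Omega$ to $G(K)$ and each expander $\ftil$ to $G(f)$ defines a functor $P\colon\cC_\Omega'\to\cP$: the defining relations $\ftil\circ g=\widetilde{f\circ g}$ for transitions $g$, and equivariance under structural maps, reduce via $F(g)=G(g)$ to the corresponding equations for $G$. Next, $\pi P$ extends to a functor $Q\colon\cC_\Omega\to\cQ$ by sending $\chi$ to the identity arrow of $\pi F(K)=\pi G(K)$ (here one needs $\pi F$ and $\pi G$ to agree on $K$, which is exactly what makes the clause ``$\pi(\phi)$ is an identity'' meaningful; it holds automatically when $F,G$ are extremal lifts of a common cone $H\colon\cC\to\cQ$, the situation in which the proposition is applied), and this is compatible with $Q(\chi)\circ_K\pi F(f)=\pi F(f)=\pi G(f)$. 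Since $F$ is $\pi$-extremal, the square determined by $P$ and $Q$ has a unique diagonal filler $D\colon\cC_\Omega\to\cP$; set $\phi:=D(\chi)$. Then $\phi$ is a morphism $F(K)\to G(K)$, $\pi(\phi)=Q(\chi)$ is an identity, and $\phi\circ_K F(f)=D(\chi)\circ_K D(f)=D(\ftil)=G(f)$ for every abstract projection $f$.

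Then I would check invertibility and uniqueness, both via the uniqueness half of $\pi$-extremality. Swapping the roles of $F$ and $G$ yields $\psi\colon G(K)\to F(K)$ with $\psi\circ_K G(f)=F(f)$ and $\pi(\psi)$ an identity. Consider the analogous lifting problem attached to $F$ alone (extend $F|_\cC$ by new object $\mapsto F(K)$, $\ftil\mapsto F(f)$, and $\chi\mapsto 1_{\pi F(K)}$): both $1_{F(K)}$ and $\psi\circ_K\phi$ are admissible values of $D(\chi)$, since $(\psi\circ_K\phi)\circ_K F(f)=\psi\circ_K(\phi\circ_K F(f))=\psi\circ_K G(f)=F(f)$ and $\pi(\psi\circ_K\phi)$ is an identity. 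Uniqueness of the diagonal filler forces $\psi\circ_K\phi=1_{F(K)}$, and symmetrically $\phi\circ_K\psi=1_{G(K)}$, so $\phi$ is an isomorphism. Finally, any $\phi'$ with $\phi'\circ_K F(f)=G(f)$ and $\pi(\phi')$ an identity determines a diagonal filler of the original square (the one with $\chi\mapsto\phi'$ and everything else as for $\phi$), hence $\phi'=\phi$.

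I expect the only real work to lie in the second paragraph: checking carefully that $P$ and $Q$ respect \emph{all} composites, identities, and structural-map actions of the entries-only \lnl polycategories involved, and keeping the sign of $K$ straight, since it governs the direction of $\phi$ and the slot along which the compositions written $\circ_K$ are formed. Conceptually nothing goes beyond ``a diagonal filler, if it exists, is unique,'' so once the expansion $\cC_\Omega$ is set up correctly the remainder is bookkeeping.
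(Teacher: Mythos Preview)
Your proposal is correct and is exactly the argument the paper has in mind: the proposition is stated with a bare \qed and no proof, and what you have written is the standard ``two universal objects are uniquely isomorphic'' argument unwound through the expansion/filler formulation of $\pi$-extremality. Your observation that the clause ``$\pi(\phi)$ is an identity'' tacitly requires $\pi F$ and $\pi G$ to agree on the vertex (and hence, a posteriori, on the projections) is accurate and matches how the result is used in the paper.
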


Given $\pi:\cP\to\cQ$, an abstract cone \cC with vertex $K$, a concrete cone $G:\cC\to \cP$, and an isomorphism $\phi:G(K) \cong K'$, there is a concrete cone $G_\phi : \cC\to \cP $ that agrees with $G$ on the reduct $\reduct{\cC}$, sends the vertex to $K'$, and the abstract projections $f$ to $G_\phi(f) = \phi\circ G(f)$.

\begin{prop}\label{thm:extremal-iso}
  If in the above construction $G$ is $\pi$-extremal, so is $G_\phi$.\qed
\end{prop}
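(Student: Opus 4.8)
The plan is to reduce the $\pi$-extremality of $G_\phi$ to that of $G$ by conjugating the relevant lifting problems with the isomorphism $\phi$. So fix an expansion $\cC_\Omega$ of $\cC$ and a lifting square as in the definition of $\pi$-extremal: a functor $P':\cC_\Omega'\to\cP$ with $P'|_{\cC}=G_\phi$, and a functor $Q:\cC_\Omega\to\cQ$ with $\pi P'=Q|_{\cC_\Omega'}$. I must produce a unique $P:\cC_\Omega\to\cP$ with $P|_{\cC_\Omega'}=P'$ and $\pi P=Q$.

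First I would transport $P'$ to a lift $\overline{P'}:\cC_\Omega'\to\cP$ of the reduct \emph{along $G$} instead of $G_\phi$. Since $G$ and $G_\phi$ agree on the reduct $\cC'$, they differ on $\cC_\Omega'$ only at the vertex (where $G_\phi(K)=K'$) and at the abstract projections (where $G_\phi(f)=\phi\circ G(f)$). So I set $\overline{P'}$ equal to $P'$ on the whole reduct, on the new objects of $\Omega$, and on all expanders $\ftil$, and I put $\overline{P'}(K)=G(K)$ and $\overline{P'}(f)=\phi^{-1}\circ P'(f)\ (=G(f))$ for each abstract projection $f$. This is again a functor, being obtained from $P'$ simply by pre-/post-composing with the isomorphism $\phi$ at the vertex object; the only composites in $\cC_\Omega'$ touching $K$ are the composites $f\circ g$ of a projection with an abstract transition, and there $\overline{P'}(f\circ g)=\overline{P'}(f)\circ\overline{P'}(g)$ reduces to functoriality of $P'$ (transitions, not involving $K$, are sent identically). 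Likewise I transport $Q$ to $\overline{Q}:\cC_\Omega\to\cQ$ by setting $\overline{Q}=\pi\overline{P'}$ on $\cC_\Omega'$ and $\overline{Q}(\chi)=Q(\chi)\circ\pi(\phi)$ on the one remaining morphism $\chi$ (well-typed since $\pi\phi$ is an isomorphism in $\cQ$). The single relation $\chi\circ_K f=\ftil$ is preserved: $\overline{Q}(\chi)\circ\overline{Q}(f)=(Q(\chi)\circ\pi\phi)\circ\pi G(f)=Q(\chi)\circ\pi(\phi\circ G(f))=Q(\chi)\circ Q(f)=Q(\ftil)=\pi P'(\ftil)=\overline{Q}(\ftil)$, using $Q(f)=\pi P'(f)=\pi G_\phi(f)$. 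By construction $\pi\overline{P'}=\overline{Q}|_{\cC_\Omega'}$.

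Now apply $\pi$-extremality of $G$ to the lifting square $(\overline{P'},\overline{Q})$: there is a unique $\overline{P}:\cC_\Omega\to\cP$ with $\overline{P}|_{\cC_\Omega'}=\overline{P'}$ and $\pi\overline{P}=\overline{Q}$. Transporting back, I define $P:\cC_\Omega\to\cP$ to equal $P'$ on $\cC_\Omega'$ and to send $\chi$ to $\overline{P}(\chi)\circ\phi^{-1}$. The same computations as above, run in reverse, show that $P$ is a functor with $P|_{\cC_\Omega'}=P'$ and $\pi P=Q$, so $P$ is a diagonal filler. For uniqueness, any filler $P''$ of the $G_\phi$-square transports in the same way to a filler $\overline{P''}$ of the $G$-square, namely $\overline{P''}(\chi)=P''(\chi)\circ\phi$; by the uniqueness clause for $G$ we get $\overline{P''}=\overline{P}$, whence $P''=P$.

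The only genuine work here is the sign- and linearity-bookkeeping: one must check that the composites ``$\phi\circ(-)$'', ``$(-)\circ\phi^{-1}$'' and ``$(-)\circ\pi\phi$'' are formed along the (underlying object of the) vertex with compatible signs, which splits into the two dual cases according as $K$ is positive or negative; each is routine. Equivalently, one may package the whole argument by observing once that, for a fixed abstract cone $\cC$ and a fixed lift of its reduct, ``post-composing the vertex of a concrete cone with an isomorphism'' is a bijective operation that carries the class of lifting problems occurring in the definition of $\pi$-extremal onto itself; $\pi$-extremality is then manifestly invariant under it, which is exactly what the displayed computations verify in coordinates. This is the cone-level analogue of \cref{thm:univ-iso}.
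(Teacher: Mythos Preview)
Your proof is correct. The paper gives no proof at all for this proposition (it ends immediately with \qed), so you have simply written out explicitly the conjugation-by-$\phi$ argument that the paper leaves to the reader; this is the expected and essentially unique approach. Your closing remark about the sign bookkeeping is apt: the explicit formulas you display (e.g.\ $\overline{Q}(\chi)=Q(\chi)\circ\pi(\phi)$) are literally correct only for one sign of the vertex, with the other case obtained by replacing $\phi$ by $\phi^{-1}$ and swapping the order of composition, exactly as you note --- and the paper's own definition $G_\phi(f)=\phi\circ G(f)$ has the same harmless sign ambiguity.
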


And a composition property for functors:

\begin{prop}\label{thm:extremal-fibcomp}
  Suppose $\pi_1:\cP_1\to \cP_2$ and $\pi_2:\cP_2\to\cP_3$, and a concrete cone $G:\cC\to \cP_1$.
  If $G$ is $\pi_1$-extremal and $\pi_1G$ is $\pi_2$-extremal, then $G$ is $\pi_2\pi_1$-extremal.
\end{prop}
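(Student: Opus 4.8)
The plan is to run the standard two-step pasting argument for lifting properties, but phrased directly in terms of the defining property of $\pi$-extremal concrete cones rather than in terms of individual cartesian morphisms. Fix an expansion $\cC_\Omega$ of $\cC$ together with a commutative square over $\pi_2\pi_1$, i.e.\ maps $P:\cC_\Omega'\to\cP_1$ and $Q:\cC_\Omega\to\cP_3$ with $\pi_2\pi_1 P$ equal to the restriction of $Q$ along $\cC_\Omega'\to\cC_\Omega$, and such that the composite $\cC\to\cC_\Omega'\xrightarrow{P}\cP_1$ is $G$. We must produce a unique $D:\cC_\Omega\to\cP_1$ restricting to $P$ on $\cC_\Omega'$ with $\pi_2\pi_1 D=Q$.

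For existence, first apply $\pi_1$: the pair $(\pi_1 P,\, Q)$ is a commutative square over $\pi_2$ whose restriction to $\cC$ is $\pi_1 G$. Since $\pi_1 G$ is $\pi_2$-extremal, there is a unique filler $D_2:\cC_\Omega\to\cP_2$ with $D_2|_{\cC_\Omega'}=\pi_1 P$ and $\pi_2 D_2=Q$. Now reinterpret the data as a commutative square over $\pi_1$: the pair $(P,\,D_2)$ commutes because $\pi_1 P=D_2|_{\cC_\Omega'}$, and its restriction to $\cC$ is still $G$. Since $G$ is $\pi_1$-extremal, there is a unique filler $D_1:\cC_\Omega\to\cP_1$ with $D_1|_{\cC_\Omega'}=P$ and $\pi_1 D_1=D_2$; then $\pi_2\pi_1 D_1=\pi_2 D_2=Q$, so $D_1$ is the desired diagonal. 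For uniqueness, if $D_1'$ is any filler of the original $\pi_2\pi_1$-square, then $\pi_1 D_1'$ is a filler of the $\pi_2$-square $(\pi_1 P, Q)$, so $\pi_1 D_1'=D_2$ by uniqueness of $D_2$; but then $D_1'$ is a filler of the $\pi_1$-square $(P,D_2)$, so $D_1'=D_1$ by uniqueness of $D_1$.

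I do not expect a genuine obstacle here: the argument is purely formal and the expansion datum $\Omega$ is simply carried along unchanged throughout, playing no active role. The only thing requiring care is the bookkeeping of the side condition ``the restriction along $\cC\to\cC_\Omega'$ is $G$ (resp.\ $\pi_1G$)'', which must be checked to persist after applying $\pi_1$ in the first step and after substituting the codomain $\cP_2$ in the second; both are immediate from functoriality. One could also remark that \cref{thm:extremal-uniq} is not needed for this proposition, since uniqueness of fillers is built into the definition of $\pi$-extremal.
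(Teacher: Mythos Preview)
Your proof is correct and follows essentially the same approach as the paper: first use $\pi_2$-extremality of $\pi_1 G$ to obtain the unique intermediate lift $\cC_\Omega\to\cP_2$, then use $\pi_1$-extremality of $G$ to obtain the unique lift $\cC_\Omega\to\cP_1$. The paper presents this as a single diagram with two dashed fillers, while you spell out the existence and uniqueness arguments separately, but the content is identical.
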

\begin{proof}
  In the diagram in \cref{fig:fibcomp},
  to find a unique lift in the rectangle, we first find a unique lower diagonal lift and then a unique upper one.
\end{proof}

\begin{figure}
  \centering
  \[
    \begin{tikzcd}
      \cC \ar[r] \ar[rr,bend left,"G"] & \preexpand{\cC}{\Psi} \ar[r] \ar[dd] & \cP_1 \ar[d,"\pi_1"] \\
      & & \cP_2 \ar[d,"\pi_2"] \\
      & \expand{\cC}{\Psi} \ar[r] \ar[uur,dashed,"\exists!"] \ar[ur,dashed,"\exists!"'] & \cP_3
    \end{tikzcd}
  \]
  \caption{Diagram for \cref{thm:extremal-fibcomp}}
  \label{fig:fibcomp}
\end{figure}

\section{Doctrines and sketches}
\label{sec:sketches}

In \cref{sec:relation-literature} we encountered a long list of categorical structures that form locally full sub-2-categories of \lnlpoly.
In this section and the next we will define a general class of such sub-2-categories, which we call \textbf{(sorted, \lnl) doctrines}.
Inspecting the examples in \cref{sec:relation-literature}, we see that each is characterized by three kinds of data:
\begin{enumerate}
\item Restrictions on the kinds of objects (e.g.\ no nonlinear objects) and the arities of morphisms (e.g.\ all linear morphisms are co-unary).
  We have already remarked that these restrictions can be detected by slicing \lnlpoly over subterminals such as $\mathsc{symmulti}$, $\mathsc{cbpv}$, etc.
  More generally, we can equip the objects or morphisms with \emph{structure} by slicing over a non-subterminal object, such as $\mathsc{plmulti}$, $\mathsc{dblsplit}$, and $\mathsc{smadj}$ in \cref{rmk:planar,rmk:double-split,eg:smadj}.\label{item:doc1}
\item Existence of universal cones, for all cones in some family (e.g.\ existence of tensors, internal-homs, modalities, or limits or colimits).
  Sometimes the universal property of these cones has to be restricted to respect the allowed arities of morphisms, which corresponds to asking for cartesian lifts over the base objects in~\ref{item:doc1}.\label{item:doc2}
\item Requirements that certain adjunctions are of some ``Kleisli type'', hence determined by a monad, a comonad, or both.\label{item:doc3}
\end{enumerate}
In this section we define \lnl doctrines, which encapsulate~\ref{item:doc1} and~\ref{item:doc2}.
In the next section we extend these to ``sorted doctrines'' that incorporate~\ref{item:doc3} as well.

\begin{defi}
  An \textbf{\lnl doctrine} $\dD$ is an \lnl polycategory $\abs{\dD}$ equipped with a family of concrete cones $G:\cC\to\abs{\dD}$, called the \textbf{\dD-cones}.
  We say \dD is \textbf{small} if $\abs{\dD}$ is small and the family of cones is also small.

  Given such a doctrine, a \textbf{\dD-category} is an \lnl polycategory \cP equipped with a functor $\pi:\cP\to\abs{\dD}$ that has extremal lifts of all \dD-cones:
  \[
    \begin{tikzcd}[column sep=large]
      \reduct{\cC} \ar[r] \ar[d] & \cP \ar[d,"\pi"] \\
      \cC \ar[r,"G"'] \ar[ur,dashed,"\exists","\pi\text{-ext}"'] & \abs{\dD}
    \end{tikzcd}
  \]
  A \textbf{\dD-functor} between \dD-categories is a morphism in $\lnlpoly/\abs{\dD}$ that preserves $\pi$-extremal lifts of \dD-cones, and a \textbf{\dD-transformation} between \dD-functors is a 2-cell in $\lnlpoly/\abs{\dD}$.
  This defines a locally full sub-2-category $\dcat\subseteq \lnlpoly$.
\end{defi}

\begin{exa}\label{eg:doctrine-rep}
  Let $\abs{\dD} = \mathsc{lnlpoly}$ be terminal, and let the \dD-cones contain one representative from each isomorphism class of cones\footnote{An isomorphism of abstract cones is an isomorphism of \lnl polycategories that preserves the vertices.} constructed in \cref{eg:extremal-univ}.
  Then by \cref{thm:rep}, a \dD-category is a birepresentable \lnl polycategory.

  Similarly, if $\abs{\dD}=\mathsc{lnlpoly}$ and the \dD-cones contain one representative of each isomorphism class of cones, by \cref{thm:all-limits} a \dD-category is a bicomplete birepresentable \lnl polycategory.
  (Note that this doctrine is not small.)
  We can include more restricted classes of limits as well by combining the cones from \cref{eg:extremal-univ} with some of those from \cref{eg:extremal-limit}; e.g.\ there is a (small) doctrine for birepresentable \lnl polycategories with finite products and coproducts (additives).
\end{exa}

\begin{exa}\label{eg:doctrine-slice}
  Taking $\abs{\dD}$ to be one of the subterminals $\mathsc{sympoly}$, $\mathsc{symmulti}$, $\mathsc{cartmulti}$, $\mathsc{cat}$, and $\mathsc{lnlmulti}$ from \cref{rmk:slice}, we can equip it with a family of cones that specify desired universal morphisms and/or limits and colimits with the appropriately restricted universal properties for the corresponding subclass of \lnl polycategories, which as noted in \cref{thm:slice-bifib,eg:limits-slice} can be characterized by saying that certain cones are $\pi$-extremal rather than globally universal.
  For instance, there is a doctrine $\dD$ with $\abs{\dD}=\mathsc{symmulti}$ for which the \dD-categories are bicomplete closed symmetric monoidal categories; another doctrine with $\abs{\dD}=\mathsc{symmulti}$ for which the \dD-categories are symmetric monoidal categories (not necessarily closed or bicomplete); a doctrine with $\abs{\dD} = \mathsc{lnlmulti}$ for which the \dD-categories are \lnl adjunctions; and so on.
  Similarly, taking $\abs{\dD} = \mathsc{cbpv}$ or $\mathsc{ecbv}$ as in \cref{rmk:cbpv,eg:cbpv-slice}, we have doctrines for CBPV adjunction models, EEC+ models, and ECBV models.
\end{exa}

Non-subterminal examples can incorporate further adjunctions.
For instance, based on \cref{eg:smadj} we can formulate a doctrine for symmetric monoidal adjunctions.
By combining this idea with arity restrictions as in \cref{rmk:cbpv} (CBPV structures), we obtain doctrines for models of polarized linear calculi as in~\cite{cfm:adjpol}:

\begin{exa}\label{eg:pol}
  Let $\mathsc{linpol}$ be the \lnl multicategory with two objects $\mathsc{p},\mathsc{n}$, both linear, a unique morphism $\Gamma \to \mathsc{p}$ when $\Gamma$ consists entirely of $\mathsc{p}$'s, and a unique morphism $\Gamma \to \mathsc{n}$ when $\Gamma$ contains no more than one $\mathsc{n}$.
  If we equip it with the single-projection cones $(\mathsc{p},\mathsc{p}) \to \underline{\mathsc{p}}$ and $() \to \underline{\mathsc{p}}$ (with vertex underlined), we obtain a doctrine whose categories consist of a symmetric \emph{monoidal} category \cE, a category \cL enriched over the \emph{Day convolution} monoidal structure on $[\cE\op,\fSet]$, and an $[\cE\op,\fSet]$-enriched functor $R:\cL \to [\cE\op,\fSet]$.
  As in \cref{rmk:cbpv}, by adding the following cones we enforce additional universal properties:
  \begin{enumerate}
  \item From $\underline{\mathsc{p}}\to\mathsc{n}$ we make $R$ land inside \cE.\label{item:pol1}
  \item From ${\mathsc{p}}\to\underline{\mathsc{n}}$ we give $R:\cL \to \cE$ a left adjoint.\label{item:pol2}
  \item From $(\underline{\mathsc{p}},{\mathsc{n}})\to \mathsc{n}$ we make \cL enriched over \cE.
  \item From $(\mathsc{p},\underline{\mathsc{n}})\to \mathsc{n}$ we give \cL powers by representables.\label{item:pol3}
  \item From $(\mathsc{p},{\mathsc{n}})\to \underline{\mathsc{n}}$ we give \cL copowers by representables.
  \end{enumerate}
  In particular, with~\cref{item:pol1,item:pol2,item:pol3} we obtain a doctrine for the \textbf{IMLL$^\eta_p$ models} of~\cite{cfm:adjpol}.
  And if we additionally include cones for $\oplus,0$ of positive objects and $\with,\top$ of negative ones, we obtain their \textbf{IMALL$^\eta_p$ models}.

  Now let $\mathsc{lnlpol}$ have two linear objects $\mathsc{p},\mathsc{n}$ and one \emph{nonlinear} object $\mathsc{x}$, with all nonlinear homsets singletons, a unique morphism $(\Theta \mid \Gamma) \to \mathsc{p}$ if $\Gamma$ consists entirely of $\mathsc{p}$'s, and a unique morphism $(\Theta\mid\Gamma) \to \mathsc{n}$ when $\Gamma$ contains no more than one $\mathsc{n}$.
  With the above cones for an IMLL$^\eta_p$ model, cones for $\times,1$, and also the morphisms $\underline{\mathsc{x}}\to \mathsc{p}$ and $\mathsc{x}\to \underline{\mathsc{p}}$ representing a $\uoc$ defined on positive objects and an $\foc$ valued in positive objects,
  this yields a doctrine for the \textbf{IMELL$^\eta_p$ models} of~\cite{cfm:adjpol}.
  Adding $\oplus,0$ of positive objects, $\with,\top$ of negative ones, plus $+,\varnothing$, we obtain \textbf{IMLL$^\eta_p$ models}.
\end{exa}

Note that the morphisms in $\dcat$ preserve the specified universal properties up to canonical isomorphism.
This is 2-categorically correct, but means that $\dcat$ is not well-endowed with strict limits and colimits.
Thus, following the philosophy of homotopy theory, we embed it in a larger but better-behaved category.

\begin{defi}
  Given an \lnl doctrine \dD, a \textbf{\dD-sketch} is an \lnl polycategory \cP together with a functor $\pi:\cP\to\abs{\dD}$, and for each \dD-cone $G:\cC\to\abs{\dD}$ a set (perhaps empty) of lifts of $G$ to \cP that we call \textbf{proto-extremal}:
  \[
    \left\{\begin{tikzcd}
      & \cP \ar[d,"\pi"] \\
      \cC \ar[r,"G"'] \ar[ur,dashed]  & \abs{\dD}
    \end{tikzcd}\right\}.
  \]
  A \textbf{morphism of \dD-sketches} is a functor in $\lnlpoly/\abs{\dD}$ that preserves proto-extremal cones; a \textbf{transformation} is an arbitrary 2-cell in $\lnlpoly/\abs{\dD}$.
  This defines a 2-category $\dsketch$.

  A \dD-sketch is \textbf{realized} if every proto-extremal cone is in fact $\pi$-extremal.
  It is \textbf{saturated} if whenever $H:\cC\to\cP$ is proto-extremal, where $K$ is the vertex of \cC, and $\phi : H(K)\cong K'$ is an isomorphism in \cP such that $\pi(\phi)$ is an identity, the cone $H_\phi : \cC\to \cP$ constructed before \cref{thm:extremal-iso} is also proto-extremal.
  It is \textbf{precomplete} if for any \dD-cone $G:\cC\to\abs{\dD}$, any lift of its reduct $\reduct{\cC}\into \cC\to \abs{\dD}$ to \cP can be extended to a proto-extremal cone:
  \[
    \begin{tikzcd}
      \reduct{\cC} \ar[r] \ar[d] & \cP \ar[d,"\pi"] \\
      \cC \ar[r,"G"'] \ar[ur,dashed,"\exists","\text{p.e.}"'] & \abs{\dD}
    \end{tikzcd}
  \]
  Finally, it is \textbf{(\dD-)complete} if it is realized, saturated, and precomplete.
\end{defi}

\begin{prop}
  The 2-category of \dD-complete sketches is equivalent, as a strict 2-category, to the 2-category $\dcat$ of \dD-categories.
\end{prop}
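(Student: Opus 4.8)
The plan is to produce the forgetful $2$-functor
\[ U\colon (\text{$\dD$-complete sketches}) \longrightarrow \dcat \]
that remembers only the underlying pair $(\cP,\pi)$ of a complete sketch and is the identity on $1$-cells and $2$-cells, and to show it is an isomorphism of $2$-categories (in particular a strict $2$-equivalence). Since the $2$-cells on both sides are, by definition, exactly the $2$-cells of $\lnlpoly/\abs{\dD}$, the statement about $2$-cells is immediate, and composition and identities are inherited from $\lnlpoly/\abs{\dD}$ on both sides, so strictness is automatic; the work is at the level of objects and $1$-cells.

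First I would check $U$ lands in $\dcat$ on objects. If $(\cP,\pi)$ underlies a $\dD$-complete sketch, then for any $\dD$-cone $G:\cC\to\abs{\dD}$ and any lift $\cC'\to\cP$ of its reduct, precompleteness supplies a proto-extremal extension $\cC\to\cP$, and realizedness makes that extension $\pi$-extremal; hence $\pi$ has extremal lifts of all $\dD$-cones, i.e.\ $(\cP,\pi)$ is a $\dD$-category. The heart of the argument is then the following lemma: \emph{in any $\dD$-complete sketch, a lift $H:\cC\to\cP$ of a $\dD$-cone $G$ is proto-extremal if and only if it is $\pi$-extremal.} The ``only if'' is realizedness. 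For ``if'', let $H$ be $\pi$-extremal with vertex $K$; its restriction to the reduct $\cC'$ is a lift of the reduct of $G$, so by precompleteness it extends to a proto-extremal cone $H'$ sharing that reduct, and $H'$ is $\pi$-extremal by realizedness. Two $\pi$-extremal cones agreeing on $\cC'$ are related, by \cref{thm:extremal-uniq}, by a unique isomorphism $\phi:H(K)\cong H'(K)$ with $\pi(\phi)$ an identity and $\phi\circ_K H(f)=H'(f)$ for every abstract projection $f$; unpacking the construction of $G_\phi$ recalled before \cref{thm:extremal-iso}, this says exactly $H'=H_\phi$, whence $H=H'_{\phi\inv}$, using the evident identities $H_{\id}=H$ and $(H_\phi)_{\phi\inv}=H$. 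Saturation applied to $H'$ (with the isomorphism $\phi\inv$, which again lies over an identity) then makes $H=H'_{\phi\inv}$ proto-extremal.

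Given this lemma, for any $\dD$-complete sketch the proto-extremal cones coincide with the $\pi$-extremal lifts of $\dD$-cones, so a functor over $\abs{\dD}$ preserves proto-extremal cones if and only if it preserves $\pi$-extremal lifts of $\dD$-cones; that is, a morphism of complete sketches is precisely a $\dD$-functor, and $U$ is the identity (hence an isomorphism) on hom-categories between any two complete sketches. It remains to see $U$ is bijective on objects, which I would do by exhibiting an inverse $V:\dcat\to(\text{$\dD$-complete sketches})$: send a $\dD$-category $(\cP,\pi)$ to the sketch on the same $(\cP,\pi)$ whose proto-extremal cones are declared to be \emph{all} $\pi$-extremal lifts of $\dD$-cones. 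This sketch is realized trivially, saturated by \cref{thm:extremal-iso}, and precomplete because $(\cP,\pi)$ is a $\dD$-category, so it is complete; and a $\dD$-functor preserves exactly these cones, so $V$ is a strict $2$-functor, identity on $1$- and $2$-cells. Then $UV=\id$ on the nose, while $VU=\id$ by the lemma (which identifies the proto-extremal cones of any complete sketch with all $\pi$-extremal lifts of $\dD$-cones), completing the proof. The one genuinely delicate point is the ``$\pi$-extremal $\Rightarrow$ proto-extremal'' half of the lemma, where both \cref{thm:extremal-uniq} and saturation are indispensable: without saturation one would obtain only a correspondence up to isomorphism, and $U$ would degrade to a mere biequivalence rather than an isomorphism of $2$-categories.
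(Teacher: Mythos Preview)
Your proof is correct and follows essentially the same approach as the paper. Both arguments hinge on the same key fact: in a \dD-complete sketch, a lift of a \dD-cone is proto-extremal if and only if it is $\pi$-extremal (realizedness gives one direction; precompleteness, \cref{thm:extremal-uniq}, and saturation give the other). The paper states this more tersely as ``in the presence of [precompleteness and realization] saturation is equivalent to saying that all $\pi$-extremal lifts of \dD-cones are proto-extremal,'' constructs only the functor $V:\dcat\to\dsketch$, and concludes with essential surjectivity, whereas you isolate the lemma explicitly, build both $U$ and $V$, and obtain an actual isomorphism of $2$-categories rather than just an equivalence; but the mathematical content is the same.
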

\begin{proof}
  We regard a \dD-category as a sketch by designating every $\pi$-extremal lift of a \dD-cone as proto-extremal.
  This defines a 2-functor $\dcat\to\dsketch$, which lands inside the \dD-complete sketches (using \cref{thm:extremal-iso}) and is an isomorphism on hom-categories.
  Moreover, precompleteness and realization make any \dD-complete sketch into a \dD-category, while in the presence of these properties saturation is equivalent (using \cref{thm:extremal-uniq}) to saying that all $\pi$-extremal lifts of \dD-cones are proto-extremal; hence the functor is essentially surjective as well.
\end{proof}

\dsketch is a complete and cocomplete strict 2-category, with limits and colimits created in \lnlpoly.
If \dD is small, \dsketch is even locally presentable.
It is also better-endowed with adjunctions, particularly ones arising from doctrine morphisms.

\begin{defi}
  Let $\dD_1,\dD_2$ be \lnl doctrines.
  A \textbf{doctrine map} $\F:\dD_1\to\dD_2$ is a functor $\abs{\F}:\abs{\dD_1}\to\abs{\dD_2}$ together with, for each $\dD_1$-cone $G:\cC\to \abs{\dD_1}$, a $\dD_2$-cone $\cC_{\F} \to \abs{\dD_2}$ and an isomorphism of abstract cones $\cC\cong \cC_{\F}$ (preserving the vertex) making the evident square commute.
\end{defi}

\begin{prop}\label{thm:sketch-adj}
  Any doctrine map $\F:\dD_1\to\dD_2$ induces a strict 2-adjunction (i.e.\ an adjunction of \fCat-enriched categories)
  \[\ladj{\F} : \sketch{\dD_1}\toot \sketch{\dD_2} : \radj{\F}.\]
\end{prop}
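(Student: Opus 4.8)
The plan is to decorate the standard slice-category adjunction --- composition with $\abs\F$ on the left, pullback along $\abs\F$ on the right, relating $\lnlpoly/\abs{\dD_1}$ and $\lnlpoly/\abs{\dD_2}$ --- with proto-extremal structure, and then to check directly that the decorated functors remain adjoint. For a $\dD_1$-sketch $(\cP,\pi_1)$ I take $\ladj\F\cP$ to have underlying object $\abs\F\circ\pi_1:\cP\to\abs{\dD_2}$, and for each $\dD_2$-cone $\cD$ I declare the proto-extremal lifts of $\cD$ to $\ladj\F\cP$ to be the cones $\widetilde G\circ\iota_G^{-1}:\cD\to\cP$, where $G$ ranges over the $\dD_1$-cones whose assigned $\dD_2$-cone (via the data of $\F$) is $\cD$, where $\iota_G$ is the accompanying isomorphism of abstract cones, and where $\widetilde G$ ranges over the proto-extremal lifts of $G$ to $\cP$; the commuting square in the definition of a doctrine map guarantees these are genuine lifts of $\cD$ along $\abs\F\pi_1$. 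For a $\dD_2$-sketch $(\cQ,\pi_2)$ I take $\radj\F\cQ$ to have underlying object the pullback $\cQ\times_{\abs{\dD_2}}\abs{\dD_1}\to\abs{\dD_1}$ (a limit, hence created in $\lnlpoly$), and for each $\dD_1$-cone $G:\cC\to\abs{\dD_1}$ I declare the proto-extremal lifts of $G$ to $\radj\F\cQ$ to be exactly those lifts $\cC\to\cQ\times_{\abs{\dD_2}}\abs{\dD_1}$ which, transported across $\iota_G$ and the universal property of the pullback, correspond to proto-extremal lifts of the assigned $\dD_2$-cone to $\cQ$.

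Next I would check that $\ladj\F$ and $\radj\F$ are strict 2-functors. For $\ladj\F$ this is immediate: a functor, resp.\ transformation, over $\abs{\dD_1}$ is in particular one over $\abs{\dD_2}$, and it preserves the new proto-extremal cones exactly because it preserved the old ones, since the new ones are built from the old by precomposition with the fixed isomorphisms $\iota_G^{-1}$. For $\radj\F$ I would first observe that the pullback computed in $\lnlpoly$ is in fact a strict 2-pullback --- a transformation into a 1-categorical pullback in $\lnlpoly$ is a family of morphisms lying in that pullback, hence equivalently a compatible pair of transformations --- so pulling back a functor or transformation over $\abs{\dD_2}$ yields one over $\abs{\dD_1}$, and preservation of proto-extremal cones transfers through the correspondence just set up.

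The heart of the argument is an isomorphism of hom-categories $\sketch{\dD_2}(\ladj\F\cP,\cQ)\iso\sketch{\dD_1}(\cP,\radj\F\cQ)$, natural in $\cP$ and $\cQ$. On underlying 1-cells, both sides are precisely the functors $H:\cP\to\cQ$ with $\pi_2 H=\abs\F\pi_1$: on the right this is the universal property of the pullback, its $\abs{\dD_1}$-component being forced to equal $\pi_1$. For such an $H$, the statement ``$H$ preserves proto-extremal cones as a morphism $\ladj\F\cP\to\cQ$'' unwinds, using the definition of $\ladj\F\cP$, to: for every $\dD_2$-cone $\cD$, every $\dD_1$-cone $G$ assigned to $\cD$, and every proto-extremal lift $\widetilde G$ of $G$ to $\cP$, the cone $H\widetilde G\iota_G^{-1}$ is proto-extremal in $\cQ$. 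The statement ``$H$ preserves proto-extremal cones as a morphism $\cP\to\radj\F\cQ$'' unwinds, using the pullback and the definition of the proto-extremal lifts of $\radj\F\cQ$, to: for every $\dD_1$-cone $G$ and every proto-extremal lift $\widetilde G$ of $G$ to $\cP$, the functor $H\widetilde G:\cC\to\cQ$ corresponds to a proto-extremal lift of the assigned $\dD_2$-cone to $\cQ$. Since every $\dD_1$-cone is assigned to exactly one $\dD_2$-cone, these two conditions coincide. On 2-cells there are no proto-extremality conditions to impose on either side, and the strict 2-pullback property identifies transformations over $\abs{\dD_1}$ into $\cQ\times_{\abs{\dD_2}}\abs{\dD_1}$ with transformations over $\abs{\dD_2}$ into $\cQ$; so the bijection on 1-cells extends to an isomorphism of hom-categories, evidently compatible with composition and with pre- and post-composition, giving the claimed \fCat-enriched adjunction.

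I expect the main obstacle to be the bookkeeping of the third paragraph rather than any deep point. Because a doctrine map can collapse several $\dD_1$-cones onto a single $\dD_2$-cone and can leave other $\dD_2$-cones untouched, the proto-extremal structure on $\ladj\F\cP$ has to be assembled as a union indexed by the fibres of the cone-assignment, and one must verify that the ``preserves proto-extremal cones'' clause relative to $\dD_2$ sees exactly the same data as the clause relative to $\dD_1$. The only other point requiring care is confirming that the slice pullback in $\lnlpoly$ has the \fCat-enriched universal property, which is what makes the resulting adjunction strict rather than merely pseudo.
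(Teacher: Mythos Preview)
Your proposal is correct and follows essentially the same approach as the paper: start from the slice 2-adjunction given by postcomposition with $\abs{\F}$ and pullback along it, then decorate both sides with exactly the proto-extremal structures you describe, and verify the adjunction lifts. The paper's proof is terser (it leaves the verification as ``straightforward''), while you spell out the matching of proto-extremality conditions and the strict 2-pullback property more explicitly; but the constructions and the argument are the same.
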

\begin{proof}
  We have a 2-adjunction
  \[ \ladj{\F} : \lnlpoly/\abs{\dD_1} \toot \lnlpoly/\abs{\dD_2} : \radj{\F} \]
  given by composition with $\abs{\F}$ and pullback along it, so it suffices to lift this to sketches.
  For the right adjoint $\radj{\F}$, we define a lift $\cC \to \radj{\F}\cP$ of some $\dD_1$-cone $\cC \to \abs{\dD_1}$ to be proto-extremal if the composite $\cC_{\F}\cong \cC \to \radj{\F}\cP \to \cP$ is proto-extremal:
  \[
    \begin{tikzcd}
      & \radj{\F}\cP \ar[rr] \ar[d] \drpullback && \cP \ar[d] \\
      & \abs{\dD_1} \ar[rr] & {}& \abs{\dD_2} \\
      \cC \ar[uur,dashed] \ar[ur] \ar[rr,"\cong",<->] && \cC_{\F} \ar[ur] \ar[uur,dashed,crossing over]
    \end{tikzcd}
  \]
  For the left adjoint $\ladj{\F}$, we define a lift $\cD\to \ladj{\F}\cP$ of some $\dD_2$-cone $\cD \to \abs{\dD_2}$ to be proto-extremal if the latter $\dD_2$-cone is the $F$-image of some $\dD_1$-cone $\cC \to \abs{\dD_1}$ and there is a proto-extremal lift $\cC\to \cP$ making the evident diagram commute:
  \[
    \begin{tikzcd}
      & \cP \ar[rr,equals] \ar[d] \drpullback && \ladj{\F}\cP \ar[d] \\
      & \abs{\dD_1} \ar[rr] & {}& \abs{\dD_2} \\
      \cC \ar[uur,dashed] \ar[ur] \ar[rr,"\cong",<->] && \cC_{\F}= \cD \ar[ur] \ar[uur,dashed,crossing over]
    \end{tikzcd}
  \]
  It is straightforward to check that these constructions lift the 2-adjunction.
\end{proof}

We really want an analogous adjunction $\cat{\dD_1} \toot \cat{\dD_2}$, but this can only be expected to be a pseudo 2-adjunction, satisfying its universal property up to equivalence.\footnote{A pseudo 2-adjunction is traditionally called a ``biadjunction'', but this seems inadvisable here since we are using the prefix ``bi-'' with a different connotation in ``bifibration'' and ``bicomplete''.}
We will construct this in \cref{sec:doc-adj}, using the above strict 2-adjunction.

\section{Sorted doctrines}
\label{sec:sorted-doctrines}

In \cref{sec:relation-literature} we chose to represent monads and comonads as their Kleisli adjunction rather than their Eilenberg--Moore adjunction (or any other), due to \cref{thm:kleisli-eso}.
Thus, to impose the third kind of ``Kleisli type'' condition mentioned in \cref{sec:sketches}, it suffices to assert essential-surjectivity properties for some of the modalities.

\begin{defi}
  An \textbf{arrow-type abstract cone} is determined by two signed objects $K,L$ (each linear or nonlinear).
  Its vertex is $K$, and its only nonidentity morphism is an abstract projection in $\cC(L,K)$.
\end{defi}

If a cone belonging to a doctrine \dD is arrow-type determined by $K,L$, then by choosing extremal lifts, any \dD-category can be equipped with a functor from the fiber over $L$ to the fiber over $K$.
This functor is contravariant if $K$ and $L$ have the same sign and covariant if they have different signs.
Of the cones from \cref{eg:extremal-univ} representing the basic universal properties from \cref{sec:lnl-polycategories}, $\foc,\uoc,\fwn,\uwn,\duals$ are arrow-type.

\begin{defi}
  A \textbf{sorted \lnl doctrine} is an \lnl doctrine \dD together with:
  \begin{enumerate}
  \item A partition of the objects of $\abs{\dD}$ (which we call \textbf{sorts}) into \textbf{primitive sorts} and \textbf{derived sorts}.
  \item For each derived sort $R$, there is exactly one \dD-cone $G_R : \cC_R \to \abs{\dD}$ whose concrete vertex $G(K)$ is $R^-$ or $R^+$, and this is an arrow-type cone whose other vertex $G(L)$ is a primitive sort.
    We call it the \textbf{sorting cone} for $R$.
  \end{enumerate}
\end{defi}

\begin{defi}
  Let \dD be a sorted doctrine and $\pi :\cS\to\abs{\dD}$ a \dD-sketch.
  \begin{itemize}
  \item \cS is \textbf{well-sorted} if for every derived sort $R$ and every object $\Rtil \in \pi^{-1}(R)$, there exists a proto-extremal lift of $G_R$ that maps the vertex to $\Rtil$.
  \item \cS is \textbf{strictly well-sorted} if for every derived sort $R$ with corresponding primitive sort $S$, there is a specified bijection between the objects of $\pi^{-1}(R)$ and $\pi^{-1}(S)$ and, for each $\Rtil$ and $\Stil$ that correspond under this bijection, a specified proto-extremal lift of $G_R$ with entries $\Rtil$ and $\Stil$.
  \end{itemize}
  We write $\dscat$ for the 2-category of well-sorted \dD-complete sketches (\dD-categories).
\end{defi}

Thus a \dD-category is well-sorted if and only if the functor $\pi^{-1}(S)\to\pi^{-1}(R)$ induced by each sorting cone is essentially surjective on objects, and strictly well-sorted if a particular choice of this functor has been made that is bijective on objects.
We are ``really'' interested in the strictly well-sorted sketches, but the non-strictly well-sorted ones are more convenient to work with technically.
Fortunately we have the following:

\begin{prop}
  For a sorted doctrine \dD, every well-sorted \dD-category is equivalent in \dsketch to a strictly well-sorted one.
\end{prop}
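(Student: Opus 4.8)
The plan is to \emph{strictify} the given well-sorted \dD-category by replacing, over each derived sort $R$, its fibre with a formal copy of the fibre over the associated primitive sort, transporting all morphisms along the sorting cones. So I would fix a well-sorted \dD-category $\pi:\cP\to\abs{\dD}$ and, for each derived sort $R$, write $G_R:\cC_R\to\abs{\dD}$ for its sorting cone: an arrow-type cone whose vertex lies over $R$ and whose other object lies over a primitive sort $S_R$. Since \cP is \dD-complete its proto-extremal cones are exactly the $\pi$-extremal ones, and by precompleteness every object $\Stil\in\pi^{-1}(S_R)$ extends along $G_R$ to a $\pi$-extremal lift; using choice, fix one, with vertex $\hat{M}_R\Stil\in\pi^{-1}(R)$. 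Now build an entries-only \lnl polycategory $\cP'$ having, over each primitive sort, the same objects as \cP, and over each derived sort $R$, one formal object $R[\Stil]$ for every $\Stil\in\pi^{-1}(S_R)$. Let $F:\cP'\to\cP$ be the identity on primitive-sort objects and send $R[\Stil]\mapsto\hat{M}_R\Stil$, and \emph{define} $\cP'(\Phi):=\cP(F\Phi)$ for every admissible list $\Phi$ of signed objects of $\cP'$, with composition, identities and structural actions inherited from \cP. Then $\cP'$ is a well-defined entries-only \lnl polycategory, $F$ is fully faithful by construction, and $\pi':=\pi F$ makes $F$ a functor over $\abs{\dD}$.

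Next I would show $F$ is an equivalence in $\lnlpoly/\abs{\dD}$. Full faithfulness holds by fiat. For essential surjectivity, given $\Rtil\in\pi^{-1}(R)$ with $R$ derived, well-sortedness supplies a $\pi$-extremal lift of $G_R$ with vertex $\Rtil$ and some reduct value $\Stil$; by \cref{thm:extremal-uniq} this lift and the chosen one at $\Stil$ differ by a unique isomorphism $\Rtil\cong\hat{M}_R\Stil=F(R[\Stil])$ lying over an identity. Choosing, for each $\Rtil$, such an $\Stil$ and isomorphism, one assembles a pseudo-inverse $F':\cP\to\cP'$ over $\abs{\dD}$ (the identity on primitive-sort objects), together with unit and counit isomorphisms whose components all lie over identities and are therefore $2$-cells in $\lnlpoly/\abs{\dD}$.

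Finally I would make $\cP'$ into a \dD-sketch by declaring a lift $H:\cC\to\cP'$ of a \dD-cone to be proto-extremal exactly when $FH$ is $\pi$-extremal in \cP. A short diagram chase using only that $F$ is fully faithful and $\pi'=\pi F$ shows that a cone into $\cP'$ is $\pi'$-extremal iff its $F$-image is $\pi$-extremal, so $\cP'$ is realized; saturation transports because $F$ commutes with the vertex-twisting construction of \cref{thm:extremal-iso}; and precompleteness transports because \cP is precomplete and, by essential surjectivity together with \cref{thm:extremal-iso}, any $\pi$-extremal lift in \cP can be moved by an isomorphism over an identity to one whose vertex and projections land in the image of $F$. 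Hence $\cP'$ is a \dD-category, and it is strictly well-sorted by construction: the required bijection $(\pi')^{-1}(R)\cong(\pi')^{-1}(S_R)$ is $R[\Stil]\leftrightarrow\Stil$, with the transported chosen lifts as the specified proto-extremal ones. Since $F$ preserves proto-extremal cones by definition, and $F'$ does so because $FF'\cong\id_{\cP}$ over identities, $F$ — with inverse $F'$ — is an equivalence in \dsketch.

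The main obstacle is this last bundle of transfers: carrying the \dD-complete conditions (realization, saturation, precompleteness) and the \dD-functor condition for $F'$ across the equivalence $F$ in $\lnlpoly/\abs{\dD}$. The one extra ingredient these need, beyond \cref{thm:extremal-uniq,thm:extremal-iso}, is the observation that $\pi$-extremality of a concrete cone is invariant under natural isomorphism lying over identities — a routine strengthening of \cref{thm:extremal-iso} (transport the unique fillers along the isomorphism), but one that must be recorded explicitly, since \cref{thm:extremal-iso} as stated only twists the vertex. Everything else is bookkeeping: checking that $\cP'$ is genuinely an entries-only \lnl polycategory, that $F$ and $F'$ are strictly over $\abs{\dD}$, and that the chosen lifts make $\cP'$ strictly well-sorted.
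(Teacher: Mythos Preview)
Your proposal is correct and follows essentially the same approach as the paper's proof: replace each derived-sort fibre by a formal copy of the associated primitive-sort fibre, transporting everything along the chosen extremal lifts of the sorting cones, and observe that the resulting fully faithful functor over $\abs{\dD}$ is an equivalence by well-sortedness together with \cref{thm:extremal-uniq}. The paper's argument is a two-sentence sketch of exactly this construction; you have spelled out the details it leaves implicit, including the careful transfer of realization, saturation, and precompleteness, and correctly flagged the one lemma not quite stated in the paper (invariance of $\pi$-extremality under natural isomorphism over identities) that is needed to make the transfer go through.
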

\begin{proof}
  If $\pi:\cS\to\abs{\dD}$ is well-sorted, for each derived sort $R$ with corresponding primitive sort $S$ we have an essentially surjective functor $\pi^{-1}(S) \to \pi^{-1}(R)$.
  Thus, we can replace $\pi^{-1}(R)$ by an equivalent category whose objects are those of $\pi^{-1}(S)$, making the functor bijective on objects.
  These equivalences on fibers extend to an equivalence of \dD-categories.
\end{proof}

Thus, $\dscat$ is equivalent (as a bicategory) to its full sub-2-category of strictly well-sorted \dD-categories.

\begin{exa}
  Any \lnl doctrine can be made sorted with all sorts primitive, so that all \dD-sketches are (vacuously) strictly well-sorted.
\end{exa}

\begin{exa}\label{eg:kleisli}
  Let \dD be any doctrine for which $\abs{\dD}$ has exactly one nonlinear object $\mathsc{x}$ and one linear object $\mathsc{a}$, such as $\mathsc{lnlmulti}$ or the terminal object $\mathsc{lnlpoly}$.
  Suppose furthermore that the only \dD-cone with vertex $\mathsc{x}^{\pm}$ is an arrow-type cone with vertex $\mathsc{x}^-$ and abstract projection in $\cC(\mathsc{a}^+,\mathsc{x}^-)$ (that is, a $\uoc$-cone).
  Then we can make \dD a sorted doctrine where $\mathsc{a}$ is primitive, $\mathsc{x}$ is derived, and this cone is the sorting cone.

  We call this a \textbf{Kleisli sorted} doctrine.
  Then a \dD-category is strictly well-sorted just when it is of Kleisli type (\cref{defn:kleisli-type}).
  If \dD also contains $\foc$, then by \cref{thm:kleisli-eso} this is equivalent to its being the Kleisli adjunction of the comonad $\oc = \foc\uoc$.
  Thus, the 2-category of symmetric monoidal categories with a linear exponential comonad, and its variants with internal-homs and/or limits and colimits, are equivalent to \dscat for some sorted \lnl doctrine \dD.
  Similarly, by taking an $\foc$-cone as sorting we can represent cartesian monoidal categories with a commutative strong monad.
\end{exa}

\begin{exa}\label{eg:double-kleisli}
  Let \dD be the sorted doctrine defined as follows.
  We take $\abs{\dD} = \mathsc{dblsplit}$, as in \cref{rmk:double-split}; thus a functor $\pi:\cP \to \abs{\dD}$ partitions the nonlinear objects of \cP into left-hand and right-hand ones.
  We equip \dD with cones for $\ten,\unit,\coten,\counit$, as well as $\foc$ defined on left-hand objects, $\uoc$ taking values in left-hand objects, $\fwn$ defined on right-hand objects, and $\uwn$ taking values in right-hand objects.
  And we take the $\uoc$ and $\uwn$ cones as sorting.
  Then a \dD-category is strictly well-sorted just when it has a choice of $\uoc$ and $\uwn$ that are bijective onto the left-hand and right-hand objects respectively.
  A straightforward extension of \cref{thm:kleisli-eso} now shows that this is the same as its being the double-Kleisli adjunction of \cref{thm:ldc-kl} constructed from the linearly distributive category with storage $\Plin$.
  Thus, the 2-categories of linearly distributive or $\ast$-autonomous categories with storage, and their variants with limits and colimits, are equivalent to \dscat for some sorted \lnl doctrine \dD.
\end{exa}

\begin{exa}
  By making one of the sorts in $\mathsc{smadj}$ (\cref{eg:smadj}) derived from the other, we obtain sorted doctrines for lax symmetric monoidal monads or comonads.
\end{exa}

\begin{exa}\label{eg:skew}
  Recall the \lnl multicategory $\mathsc{linpol}$ from \cref{eg:pol}.
  We now rechristen it $\mathsc{symskew}$, calling its two linear objects $\mathsc{l}$ and $\mathsc{t}$; thus there is a unique morphism $\Gamma \to \mathsc{l}$ when $\Gamma$ consists entirely of $\mathsc{l}$'s, and a unique morphism $\Gamma \to \mathsc{t}$ when $\Gamma$ contains no more than one $\mathsc{t}$.
  We make this a sorted doctrine \dD with $\mathsc{t}$ primitive, $\mathsc{l}$ derived, sorting cone $\mathsc{l} \to \mathsc{t}$ (with vertex $\mathsc{l}$), and no other cones.

  A strictly well-sorted \dD-category is determined by the objects over $\mathsc{t}$ and the morphisms with target over $\mathsc{t}$.
  Every object over $\mathsc{l}$ is the image of one over $\mathsc{t}$ by a functor that we may either leave implicit or denote $\mathsf{G}$.
  We call a morphism over $\Gamma \to \mathsc{t}$ \emph{loose} if $\Gamma$ consists entirely of $\mathsc{l}$'s; thus the loose homsets are of the form $\cP(\mathsf{G} A_1,\dots ,\mathsf{G} A_n; B)$.
  We call a morphism over $\Gamma \to \mathsc{t}$ \emph{tight} if $\Gamma$ contains a $\mathsc{t}$; these tight homsets are uniquely determined by those where the \emph{first} element of $\Gamma$ is $\mathsc{t}$, i.e.\ of the form $\cP(A_1, \mathsf{G} A_2,\dots ,\mathsf{G} A_n; B)$.
  This yields a doctrine for the \textbf{symmetric skew multicategories} of~\cite[\S5]{bl:braided-skew}; the morphism $j$ from tight to loose morphisms:
  \[\cP(A_1, \mathsf{G} A_2,\dots ,\mathsf{G} A_n; B) \to \cP(\mathsf{G} A_1, \mathsf{G} A_2, \dots ,\mathsf{G} A_n; B)\]
  is given by composition with the universal arrow $\mathsf{G} A_1 \to A_1$ over the sorting cone.

  In a skew multicategory regarded as an \lnl polycategory over $\mathsc{symskew}$, a tight unit $\unit$ (with restricted universal property) is a ``left universal nullary map classifier''.
  Similarly, for objects $A$ and $B$ over $\mathsc{t}$, with corresponding objects $\mathsf{G} A$ and $\mathsf{G} B$ over $\mathsc{l}$, a tensor product $A\ten \mathsf{G} B$ (which also lies over $\mathsc{t}$) is a ``left universal tight binary map classifier'' (see~\cite[\S4.4]{bl:skew-multi}); and a hom $\mathsf{G} A \hom B$ (also lying over $\mathsc{t}$) corresponds to the notion of ``closedness'' from~\cite[\S4.5]{bl:skew-multi}.
  Thus, by~\cite{bl:skew-multi,bl:braided-skew}, we have sorted \lnl doctrines for (symmetric) skew monoidal categories and (symmetric) skew closed categories.
  In particular, the ``noninvertible associator'' of a skew monoidal category is represented as a comparison map
  \[ (A \ten \mathsf{G} B) \ten \mathsf{G} C \too A \ten \mathsf{G} (B \ten \mathsf{G} C) \]
  whose noninvertibility is unsurprising due to the different placements of $\mathsf{G}$.
  (However, a symmetric closed skew-monoidal category is \emph{not} a bifibration over $\mathsc{symskew}$; it lacks some universal properties, such as a tensor product of two loose objects.)
\end{exa}

\begin{exa}\label{eg:freyd}
  Let \dD be the sorted doctrine with $\abs{\dD} = \mathsc{cbpv}$, with a single cone for $\foc$ that is sorting.
  Thus, a strictly well-sorted \dD-category is a linearly subunary \lnl multicategory with an $\foc$ satisfying a restricted universal property, and such that $\foc$ is bijective from the nonlinear objects to the linear ones.
  Thus, it consists of a cartesian multicategory together with additional linear homsets
  \begin{equation}
    \Pl(X_1,\dots,X_n | ; \, \foc Z).\label{eq:freyd-linhom}
  \end{equation}
  This information uniquely determines the other linear homsets by the \foc-isomorphism:
  \[ \Pl(X_1,\dots,X_n | \foc Y ; \, \foc Z) \cong \Pl(X_1,\dots,X_n, Y | ; \, \foc Z). \]
  However, passing back along these isomorphisms yields multicategorical composition operations on the linear homsets~\eqref{eq:freyd-linhom}:
  \begin{align*}
    \Pl(\Upsilon, X | ; \, \foc Y) \times \Pl(\Theta | ; \, \foc X)
    &\cong \Pl(\Upsilon | \foc X ; \, \foc Y) \times \Pl(\Theta | ; \, \foc X)\\
    &\to \Pl(\Upsilon,\Theta | ; \, \foc Y ).
  \end{align*}
  This composition treats the universal morphisms $\chi \in \Pl(X | ; \, \foc X)$ as identities.
  Moreover, naturality of the $\foc$-isomorphisms implies that these operations are associative in the limited sense that the two composite functions
  \[ \Pl(\Theta_3, Y | ; \, \foc Z) \times \Pl(\Theta_2, X | ; \, \foc Y) \times \Pl(\Theta_1 | ; \, \foc X)
    \to \Pl(\Theta_3,\Theta_2,\Theta_1 | ; \, \foc Z) \]
  are equal.
  However, because of the restricted universal property of $\foc$, nothing forces the two composite functions
  \begin{equation}
    \Pl(\Theta_3, X, Y | ; \, \foc Z) \times \Pl(\Theta_2 | ; \, \foc Y) \times \Pl(\Theta_1 | ; \, \foc X)
    \toto \Pl(\Theta_3,\Theta_2,\Theta_1 | ; \, \foc Z)\label{eq:freyd-nonassoc}
  \end{equation}
  to be equal, as they would be if the homsets~\eqref{eq:freyd-linhom} formed a (cartesian) multicategory.
  This means the linear homsets~\eqref{eq:freyd-linhom} have the structure of a \emph{cartesian pre-multicategory} in the sense of~\cite{sl:univprop-impure}.

  Finally, composing with the universal morphism $\chi \in \Pl(X | ; \, \foc X)$ provides a function
  \[ \Pnl(\Theta ; X) \to \Pl(\Theta | ; \, \foc X) \]
  that respects the cartesian actions, identities, and compositions.
  Moreover, the linear morphisms in the image of this map are \emph{central}, meaning that the two morphisms~\eqref{eq:freyd-nonassoc} are equal if one of the morphisms into $\foc X$ or $\foc Y$ is in this image.
  Thus, we conclude that a strictly well-sorted \dD-category can be identified with a \emph{cartesian Freyd multicategory} in the sense of~\cite{sl:univprop-impure}: a cartesian multicategory $\cV$ of ``values'', a cartesian pre-multicategory $\cC$ of ``computations'', and an identity-on-objects functor $\mathsf{return}: \cV\to \cC$ that preserves centrality.
  (I am indebted to Max New for this observation.)

  A similar doctrine with $\abs{\dD} = \mathsc{symskew}$ yields symmetric Freyd multicategories.
  However, I don't believe there is a sorted doctrine such that the strictly well-sorted \dD-categories can be identified with bare (cartesian or symmetric) pre-multicategories.
  We can ``remove'' the extra information of the nonlinear morphisms by requiring either that the only nonlinear morphisms are projections, or that the nonlinear morphisms coincide with the central linear ones; but neither of these conditions is enforcable doctrinally.
  (Similarly, a \emph{duploid}~\cite{mm:duploids} is an adjunction of ordinary categories with certain restrictions: adjunctions can be modeled doctrinally over the base $\mathsc{adj}$ from \cref{eg:adj}, but the duploid conditions are not doctrinal.)
  
  A nonlinear product $X\times Y$ in a cartesian Freyd multicategory is the same as a \emph{tensor} in the sense of~\cite{sl:univprop-impure}: a (pre)multicategorical tensor in $\cV$ that is preserved by $\mathsf{return}$.
  As shown in~\cite[\S8]{sl:univprop-impure}, a cartesian Freyd multicategory with all such tensors (and units) is equivalent to a Freyd-category in the sense of~\cite{pt:freyd-cats}: a cartesian monoidal category $\cV$, a symmetric premonoidal category~\cite{pr:premonoidal} $\cC$, and an identity-on-objects symmetric premonoidal functor $\mathsf{return}: \cV\to \cC$ that preserves centrality.
  (Alternatively, one can use the characterization of Freyd-categories from~\cite{levy:cbpv}, which is akin to those of CBPV structures in \cref{rmk:cbpv}.)

  Similarly, a nonlinear coproduct $X+Y$ in a cartesian Freyd multicategory is the same as a \emph{sum} in the sense of~\cite{sl:univprop-impure}.
  Finally, a cartesian Freyd multicategory has \emph{function spaces} in the sense of of~\cite[\S6]{sl:univprop-impure} if and only if it has our mixed homs $\mixedeechom$.
  The latter means that for any nonlinear object $X$ and linear object $\foc Y$, there is a nonlinear object $X \mixedeechom \foc Y$, with a universal linear morphism $\chi \in \Pl(X \mixedeechom \foc Y, X | ; \, \foc Y)$ inducing a bijection
  \[ \Pl(\Theta, X | ; \, \foc Y) \cong \Pnl(\Theta ; X \mixedeechom \foc Y) \]
  between computations and values, as in~\cite[(4)]{sl:univprop-impure}.
\end{exa}

Unlike \dD-completeness, well-sortedness is a \emph{coreflective} property.

\begin{prop}\label{thm:sort-corefl}
  For any sorted doctrine \dD, the 2-category of well-sorted \dD-sketches is coreflective in \dsketch, and the coreflector preserves \dD-completeness.
\end{prop}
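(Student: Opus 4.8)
The plan is to build the coreflector explicitly by discarding, from a \dD-sketch, those objects over derived sorts that are not witnessed by any proto-extremal lift of a sorting cone. Given a \dD-sketch $\pi:\cP\to\abs{\dD}$, let $\cP^{ws}$ be the full sub-\lnl polycategory of $\cP$ on all objects lying over primitive sorts, together with every object $\Rtil$ lying over a derived sort $R$ such that some proto-extremal lift of the sorting cone $G_R$ has $\Rtil$ as its vertex. A full sub-\lnl polycategory on an arbitrary set of objects is again an \lnl polycategory, since cut never introduces new entries, so all composites, identities and structural actions stay within the chosen objects (cf.\ the remark on sub-multicategories in \cref{sec:relation-literature}). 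Equip $\cP^{ws}$ with the restriction of $\pi$, and declare a lift of a \dD-cone to $\cP^{ws}$ to be proto-extremal exactly when its composite with the inclusion is proto-extremal in $\cP$. Then $\cP^{ws}$ is well-sorted: if $\Rtil\in\cP^{ws}$ lies over a derived sort $R$, a proto-extremal lift $H$ of $G_R$ with vertex $\Rtil$ exists by construction, and since $G_R$ is arrow-type its only other entry lies over the primitive sort paired with $R$, hence in $\cP^{ws}$; so $H$ factors through the inclusion $\iota:\cP^{ws}\to\cP$ and is proto-extremal in $\cP^{ws}$. Visibly $\iota$ is a morphism of \dD-sketches.

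Next I would check that $\iota$ exhibits $\cP^{ws}$ as a coreflection of $\cP$ onto the full sub-2-category of well-sorted \dD-sketches, i.e.\ that $\cP\mapsto\cP^{ws}$ is right 2-adjoint to the inclusion. Let $\cT$ be a well-sorted \dD-sketch and $F:\cT\to\cP$ a morphism in $\dsketch$. The crucial point is that $F$ sends each object of $\cT$ into $\cP^{ws}$: an object over a primitive sort maps to one over that sort; an object $\Ttil$ over a derived sort $R$ is, by well-sortedness of $\cT$, the vertex of a proto-extremal lift $H$ of $G_R$, and then $FH$ is a proto-extremal lift of $G_R$ in $\cP$ (as $F$ preserves proto-extremal cones and lies over $\abs{\dD}$) with vertex $F(\Ttil)$, so $F(\Ttil)\in\cP^{ws}$. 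Since $\iota$ is injective on objects and fully faithful on morphisms and on $2$-cells, $F$ factors uniquely through $\iota$ as a morphism of \dD-sketches, and this factorization is an isomorphism of hom-categories $\dsketch(\cT,\cP^{ws})\cong\dsketch(\cT,\cP)$, $2$-natural in $\cT$. So $\iota$ is the counit of the desired coreflection.

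Finally I would verify that the coreflector preserves \dD-completeness. Assume $\cP$ is realized, saturated and precomplete. A proto-extremal cone of $\cP^{ws}$ is, by our definition, one of $\cP$ that factors through $\iota$; in the defining diagram for $\pi$-extremality from \cref{defn:cones}, and in the construction of $H_\phi$ before \cref{thm:extremal-iso}, every new morphism that appears has all its entries among objects already mapped into $\cP^{ws}$ --- for saturation the replacement vertex $K'$ lies in $\cP^{ws}$ by hypothesis --- so the unique filler, resp.\ the cone $H_\phi$, provided by realization, resp.\ saturation, of $\cP$ automatically factors through $\iota$ and is unique there since $\iota$ is faithful; hence $\cP^{ws}$ is realized and saturated. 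For precompleteness, let $G:\cC\to\abs{\dD}$ be a \dD-cone with vertex $K$ and $G_0:\cC'\to\cP^{ws}$ a lift of its reduct; precompleteness of $\cP$ gives a proto-extremal extension $H:\cC\to\cP$, and it remains only to see that $H(K)\in\cP^{ws}$ (all other objects of $\cC$ already lie in $\cP^{ws}$ via $G_0$). If $G(K)$ lies over a primitive sort this is immediate. If $G(K)$ lies over a derived sort $R$, then because the sorting cone $G_R$ is the \emph{only} \dD-cone whose concrete vertex lies over $R$, we must have $G=G_R$; but then $H$ is itself a proto-extremal lift of $G_R$ with vertex $H(K)$, so $H(K)\in\cP^{ws}$ by the very definition of $\cP^{ws}$. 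Thus $H$ factors through $\iota$ and witnesses precompleteness of $\cP^{ws}$.

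The main obstacle is precisely this last transfer of precompleteness: it is exactly here that one needs the structural peculiarity of sorted doctrines, namely that each derived sort carries a unique associated \dD-cone (its sorting cone), so that the objects one is forced to reconstruct over a derived sort $R$ are never demanded by any cone other than $G_R$ itself. Everything else is bookkeeping around the elementary fact that a full sub-\lnl polycategory inclusion is faithful and injective on objects, so that extremal fillers, saturation witnesses, and factorizations constructed inside $\cP$ descend to $\cP^{ws}$ as soon as all of their objects do.
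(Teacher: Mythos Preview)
Your proof is correct and follows essentially the same approach as the paper: both construct the coreflection as the full sub-\lnl polycategory on objects over primitive sorts together with objects over derived sorts that are vertices of proto-extremal sorting-cone lifts, and both use the defining uniqueness of sorting cones for derived sorts to transfer precompleteness. Your version is considerably more explicit in verifying the coreflection property and the transfer of realization and saturation, whereas the paper dispatches these with ``clearly''; but the construction and the key observation are the same.
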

\begin{proof}
  The coreflection of a \dD-sketch \cS is its full sub-\lnl-polycategory $\cS'$ containing all objects of \cS that lie over primitive sorts, and precisely those objects lying over derived sorts that are the vertex of a proto-extremal lift of the sorting cone.
  Its proto-extremal cones are precisely those of \cS that land in this subcategory.

  If \cS is \dD-complete, $\cS'$ is clearly still realized and saturated.
  To see that $\cS'$ is also still precomplete, note that by construction it still has proto-universal lifts of the sorting cones.
  But by definition, any non-sorting \dD-cone must have a \emph{primitive} vertex, and therefore the proto-universal lifts of such cones in \cS still lie in $\cS'$.
\end{proof}

\begin{exa}
  Over a Kleisli sorted doctrine, the well-sorted coreflection of an \lnl adjunction is the Kleisli adjunction of its comonad.
  Similarly, over the doctrine of linearly distributive categories with storage from \cref{eg:double-kleisli}, the well-sorted coreflection of a linearly distributive \lnl adjunction (\cref{thm:ldc-adj}\ref{item:la3}) is the double-Kleisli adjunction of its induced monad/comonad pair (\cref{thm:ldc-kl}).
\end{exa}

Finally, we remark on what it takes for a doctrine map to preserve well-sortedness.

\begin{defi}\label{defn:docmap-sort}
  Let $\dD_1$ and $\dD_2$ be sorted doctrines.
  A doctrine map $\F:\dD_1\to\dD_2$ is \textbf{sorted} if it preserves primitive sorts, derived sorts, and sorting cones, and moreover for any derived sort $R$ of $\dD_1$, any sorting $\dD_2$-cone with vertex $F(R)$ is the image of some sorting $\dD_1$-cone with vertex $R$.
\end{defi}

\begin{prop}\label{thm:docmap-sort}
  If $\F:\dD_1\to\dD_2$ is a sorted doctrine map, then $\ladj{\F}$ and $\radj{\F}$ from \cref{thm:sketch-adj} preserve well-sortedness.
\end{prop}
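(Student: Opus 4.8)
The plan is to handle $\radj{\F}$ and $\ladj{\F}$ separately, unwinding the explicit construction of each adjoint from the proof of \cref{thm:sketch-adj} and matching up sorting cones using that $\F$ is sorted. The two facts about a sorted $\F$ that I will use are: (a) for a derived sort $R$ of $\dD_1$, the $\F$-image of the (unique) sorting cone $G_R$ is a $\dD_2$-cone with vertex $\abs{\F} R$, and $\abs{\F} R$ is derived, so by uniqueness of sorting cones this image is $G_{\abs{\F} R}$; and (b) the ``moreover'' clause of \cref{defn:docmap-sort}, that conversely every sorting $\dD_2$-cone with vertex $\abs{\F} R$ is the $\F$-image of a sorting $\dD_1$-cone with vertex $R$, which must then be $G_R$.

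For $\radj{\F}$: let $\tau:\cT\to\abs{\dD_2}$ be a well-sorted $\dD_2$-sketch, with $\pi:\radj{\F}\cT\to\abs{\dD_1}$ the pullback of $\tau$ along $\abs{\F}$. An object of $\radj{\F}\cT$ over a derived sort $R$ of $\dD_1$ is a pair $(R,T)$ with $T$ lying over the derived sort $\abs{\F} R$ of $\dD_2$. Well-sortedness of $\cT$ supplies a proto-extremal lift $H:\cC_{\abs{\F} R}\to\cT$ of $G_{\abs{\F} R}=(G_R)_{\F}$ sending the vertex to $T$. Precomposing $H$ with the abstract-cone isomorphism $\cC_R\cong(\cC_R)_{\F}$ and pairing with $G_R$, the universal property of the pullback yields a lift $\cC_R\to\radj{\F}\cT$ of $G_R$ sending the vertex to $(R,T)$; by the definition of proto-extremality in $\radj{\F}\cT$ it is proto-extremal since its composite down to $\cT$ is $H$ (up to the isomorphism $\cC_R\cong(\cC_R)_{\F}$). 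Hence $\radj{\F}\cT$ is well-sorted.

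For $\ladj{\F}$: let $\sigma:\cS\to\abs{\dD_1}$ be a well-sorted $\dD_1$-sketch, so $\ladj{\F}\cS$ is $\cS$ equipped with $\abs{\F}\circ\sigma$. Let $\Stil$ be an object of $\ladj{\F}\cS$ over a derived sort $R'$ of $\dD_2$, and put $R=\sigma(\Stil)$. Since $\F$ preserves primitive sorts and $\abs{\F} R=R'$ is derived, $R$ cannot be primitive, hence is derived; and by fact (b) the sorting cone $G_{R'}$ equals $(G_R)_{\F}$. Well-sortedness of $\cS$ provides a proto-extremal lift of $G_R$ to $\cS$ sending the vertex to $\Stil$. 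Comparing with the definition of proto-extremality in $\ladj{\F}\cS$ — which asks exactly that the target $\dD_2$-cone be the $\F$-image of a $\dD_1$-cone $\cC$, together with a compatible proto-extremal lift $\cC\to\cS$ — this datum is precisely a proto-extremal lift of $G_{R'}=(G_R)_{\F}$ to $\ladj{\F}\cS$ with vertex $\Stil$. Hence $\ladj{\F}\cS$ is well-sorted.

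The one genuine subtlety is the $\ladj{\F}$ direction, at the point where $G_{R'}$ must be exhibited as an $\F$-image: proto-extremal cones in $\ladj{\F}\cS$ exist only over $\dD_2$-cones pulled back from $\dD_1$, so without the ``moreover'' hypothesis of \cref{defn:docmap-sort} a sorting $\dD_2$-cone outside the image of $\F$ could have a fiber object admitting no proto-extremal lift, and well-sortedness would break. For $\radj{\F}$ the analogous point is automatic, since the fiber of $\radj{\F}\cT$ over $R$ is literally the fiber of $\cT$ over $\abs{\F} R$. The remaining work is routine diagram-chasing through the pullback and composite descriptions of the two adjoints, keeping track of the abstract-cone isomorphisms $\cC_R\cong(\cC_R)_{\F}$.
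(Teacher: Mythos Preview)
Your proof is correct and follows essentially the same approach as the paper's: for each adjoint you chase an object over a derived sort back to the appropriate sorting cone, using that $\F$ preserves derived sorts (so the preimage sort in the $\ladj{\F}$ case is derived) and that $\F$ matches up sorting cones (so $G_{\abs{\F} R}=(G_R)_{\F}$), exactly as the paper does. The only cosmetic difference is that the paper treats $\ladj{\F}$ first and is terser about the pullback bookkeeping; your final paragraph slightly overstates the role of the ``moreover'' clause in the $\ladj{\F}$ direction, since once $R=\sigma(\Stil)$ is known to be derived, preservation of sorting cones together with uniqueness of the sorting cone at $\abs{\F} R$ already forces $(G_R)_{\F}=G_{R'}$.
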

\begin{proof}
  For $\ladj{\F}$, let $\pi :\cS \to \abs{\dD_1}$ be a well-sorted $\dD_1$-sketch, let $R$ be a derived $\dD_2$-sort, and let $S \in (F\pi)^{-1}(R)$.
  Then $\pi(S)$ is a derived $\dD_1$-sort.
  So since \cS is well-sorted, there is a proto-extremal lift of its sorting cone $G_R$ that maps the vertex to $S$.
  But by assumption, $F G_R$ is the sorting $\dD_2$-cone of $F(R)$, while by definition this lift of it is also proto-extremal in $\ladj{\F}(\cS)$.
  Thus, $\ladj{\F}(\cS)$ is well-sorted.

  For $\radj{\F}$, let $\pi:\cS\to\abs{\dD_2}$ be a well-sorted $\dD_2$-sketch and $R$ a derived $\dD_1$-sort.
  An object of $\radj{\F}(\cS)$ over $R$ is an object $S\in \pi^{-1}(F(R))$.
  Since $F(R)$ is a derived $\dD_2$-sort and $\cS$ is well-sorted, there is a proto-extremal lift of its sorting cone $G_{F(R)}$ that maps the vertex to $S$.
  By assumption, $G_{F(R)}$ is the image of the sorting $\dD_1$-cone $G_R$, and this proto-extremal lift of $G_{F(R)}$ induces a proto-extremal lift of $G_R$ to $\radj{\F}(\cS)$ mapping the vertex to $S$.
  Thus, $\radj{\F}(\cS)$ is well-sorted.
\end{proof}

\section{The doctrinal completion of a sketch}
\label{sec:rep-cplt}

We will now show that any \dD-sketch can be completed to a \dD-category in a universal way.
Recall (see e.g.~\cite{ar:loc-pres}) that an object \cP of a category is said to be \textbf{injective} with respect to a set of morphisms \cI if for any morphism $\cA\to\cB$ in \cI, any morphism $\cA\to\cP$ can be extended to \cB (not necessarily uniquely):
\[
  \begin{tikzcd}
    \cA \ar[r] \ar[d] & \cP\\
    \cB \ar[ur,dotted]
  \end{tikzcd}
\]
The class of all \cI-injective objects is called a \textbf{small-injectivity class} (``small-'' since $\cI$ is a set rather than a proper class).
If we require the extensions to be \emph{unique}, we obtain the related notions of \textbf{orthogonal} object and \textbf{small-orthogonality class}.
In a category with pushouts, \cP is orthogonal to $\cA\to\cB$ if and only if it is injective with respect to $\cA\to\cB$ and its codiagonal $\cB+_\cA \cB \to\cB$; thus every small-orthogonality class is also a small-injectivity class.

\begin{thm}\label{thm:rep-inj}
  If \dD is small, then the \dD-complete sketches are a small-injectivity class in $\dsketch$.
\end{thm}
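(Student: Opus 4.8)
The plan is to exhibit, for small \dD, a \emph{set} \cI of morphisms of \dsketch whose injective objects are exactly the \dD-complete sketches. Since \dD-completeness is the conjunction ``realized, saturated, and precomplete'', I will assemble \cI as a union of families, one for each clause. Two facts are used throughout: smallness of \dD, which guarantees that there is only a set of \dD-cones $G:\cC\to\abs{\dD}$ (each with small domain \cC, hence with only a set of expansions, each admitting only a set of functors to $\abs{\dD}$), so that all the indexing families below are sets; and cocompleteness of \dsketch with colimits created in \lnlpoly, together with the recalled fact that \cP is orthogonal to $\cA\to\cB$ iff it is injective with respect to $\cA\to\cB$ and to its codiagonal $\cB+_\cA\cB\to\cB$.

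For \textbf{precompleteness}: for each \dD-cone $G:\cC\to\abs{\dD}$ let $\cA_G$ be the \dD-sketch with underlying polycategory the reduct $\cC'$, structure map $\cC'\into\cC\xto{G}\abs{\dD}$, and no proto-extremal cones; and let $\cB_G$ be the \dD-sketch with underlying polycategory \cC, structure map $G$, and the identity cone $1_\cC$ marked proto-extremal. The inclusion $\cC'\into\cC$ is a morphism $\cA_G\to\cB_G$; a morphism $\cA_G\to\cP$ is precisely a lift of the reduct of $G$ to \cP; and it extends along $\cA_G\to\cB_G$ iff that lift extends to a proto-extremal cone $\cC\to\cP$. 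Hence injectivity against $\{\cA_G\to\cB_G\}$ is precompleteness. For \textbf{saturation}: writing $K$ for the vertex of \cC and $R$ for its underlying object, let $\cC[K\iso K']$ be \cC with a freely adjoined object $R'$ of the same sort as $R$ and a formal isomorphism between them, equipped with the structure map extending $G$ that sends $R'$ to $G(R)$ and the formal isomorphism to $1_{G(R)}$. Let $\cA'_G$ be this polycategory with proto-extremal cone the inclusion $\cC\into\cC[K\iso K']$, and $\cB'_G$ the same sketch with both that inclusion and its twist by the formal isomorphism (in the sense of the construction preceding \cref{thm:extremal-iso}) marked proto-extremal; the identity functor is a morphism $\cA'_G\to\cB'_G$. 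A morphism $\cA'_G\to\cP$ is exactly a proto-extremal $H:\cC\to\cP$, an object $K'$, and an isomorphism $\phi:H(K)\iso K'$ with $\pi(\phi)$ an identity; and it extends to $\cB'_G$ iff $H_\phi$ is proto-extremal. Hence injectivity against $\{\cA'_G\to\cB'_G\}$ is saturation.

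\textbf{Realization} is the main obstacle. The condition ``every proto-extremal cone is $\pi$-extremal'' demands \emph{unique} diagonal fillers, so I realize it as an orthogonality condition. Index by a \dD-cone $G:\cC\to\abs{\dD}$, an expansion $\cC_\Omega$ of \cC, and a functor $b:\cC_\Omega\to\abs{\dD}$ extending $G$; call such a datum $e$. Let $\cA_e$ be the \dD-sketch with underlying polycategory the pre-expansion $\cC_\Omega'$, structure map the restriction of $b$, and proto-extremal cone the inclusion $\cC\into\cC_\Omega'$; and let $\cB_e$ be the \dD-sketch with underlying polycategory $\cC_\Omega$, structure map $b$, and proto-extremal cone $\cC\into\cC_\Omega$. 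The inclusion $\cC_\Omega'\into\cC_\Omega$ is a morphism $\cA_e\to\cB_e$, and unwinding the definitions shows that a morphism $\cA_e\to\cP$ is precisely a commutative square of the shape appearing in the definition of $\pi$-extremality, based at a proto-extremal cone whose underlying \dD-cone is $G$ and with expansion datum $(\cC_\Omega,b)$ --- the point being that $b$ is exactly what the structure map of $\cA_e$ absorbs. Consequently \cP is injective against $\cA_e\to\cB_e$ iff every such square admits \emph{some} diagonal filler; adjoining the codiagonals $\cB_e+_{\cA_e}\cB_e\to\cB_e$ upgrades ``some'' to ``unique''. Thus injectivity against these two families is exactly realization, and since \dD is small there is only a set of data $e$, so both families are sets.

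Taking \cI to be the union of the four families, $\cP$ is \cI-injective iff \cP is precomplete, saturated, and realized, i.e.\ \dD-complete, which gives the theorem. The one genuinely delicate part is the bookkeeping in the realization step: checking that each auxiliary $\cA_e,\cB_e$ is a legitimate \dD-sketch (structure maps and cone designations well-defined), that morphisms $\cA_e\to\cP$ really do reproduce \emph{all} commutative squares over proto-extremal cones as $e$ varies, and that the codiagonal manoeuvre is available because \dsketch has the requisite pushouts.
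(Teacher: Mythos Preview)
Your proof is correct and follows essentially the same approach as the paper: the same three families of generating morphisms (reduct inclusions for precompleteness, pre-expansion inclusions together with their codiagonals for realization, and the ``add a formal isomorphism with one versus two marked cones'' for saturation), with the same sketch structures on each. The only differences are cosmetic ordering and notation.
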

\begin{proof}
  Given any \dD-cone $G:\cC\to \abs{\dD}$, we regard it as a \dD-sketch in which the only proto-extremal cone is $G$ itself.
  We also regard its reduct as a \dD-sketch via the composite $\reduct{\cC}\into \cC \to \abs{\dD}$, with no proto-extremal cones at all.
  Then a \dD-sketch \cP is precomplete if and only if it is injective to the inclusions of \dD-sketches $\reduct{\cC}\into \cC$.

  Similarly, given any \dD-cone $G:\cC\to \abs{\dD}$, any expansion of it (\cref{defn:cones}), and any extension of $G$ to $G_\Psi : \expand{\cC}{\Psi} \to \abs{\dD}$, we regard $\expand{\cC}{\Psi}$ and its corresponding pre-expansion $\preexpand{\cC}{\Psi}$ as \dD-sketches via $G_\Psi$ and its restriction to $\preexpand{\cC}{\Psi}$, in which the only proto-extremal cone is $G$.
  Then a \dD-sketch \cP is realized if and only if it is \emph{orthogonal} to the set of inclusions of \dD-sketches $\preexpand{\cC}{\Psi}\into \expand{\cC}{\Psi}$, indexed over all $G$, $\Psi$, and $G_\Psi$.

  Finally, given an abstract cone $\cC$ with vertex $K$, let $\cC_{\cong}$ denote the \lnl polycategory that is \cC with an additional signed object $K'$ isomorphic to $K$.
  There is a fold map $\cC_{\cong} \to \cC$ that collapses $K$ and $K'$ both to $K$, which has two sections $s,s':\cC\to\cC_{\cong}$ sending $K$ to $K$ and $K'$ respectively.
  If $G:\cC\to \dD$ is a \dD-cone, we can regard $\cC_{\cong}$ as a \dD-sketch via the composite $\cC_{\cong} \to \cC\to \abs{\dD}$, in which both $s$ and $s'$ are proto-extremal.
  We can also regard it as a \dD-sketch in which only $s$ is proto-extremal; we denote this sketch by $\cC_{\cong}'$.
  Then a \dD-sketch is saturated if and only if it is injective with respect to the set of inclusions of \dD-sketches $\cC_{\cong}' \into \cC_{\cong}$.

  Let $\cI_\dD$ denote the set of all the morphisms
  \begin{alignat*}{2}
    \reduct{\cC}&\into\cC &
    \preexpand{\cC}{\Psi} &\into \expand{\cC}{\Psi} \\
    \cC_{\cong}' &\into \cC_{\cong} &\qquad
    \expand{\cC}{\Psi} +_{\preexpand{\cC}{\Psi}}\expand{\cC}{\Psi} &\to \expand{\cC}{\Psi}
  \end{alignat*}
  as \cC ranges over the \dD-cones.
  Then a sketch is \dD-complete if and only if it is injective with respect to $\cI_\dD$.
\end{proof}

\begin{rem}\label{rmk:orth}
  The proof shows that realized \dD-sketches are actually a small-ortho\-gonality class.
  Saturated \dD-sketches are also a small-orthogonality class, since the inclusions $\cC_{\cong}' \into \cC_{\cong}$ are epimorphic (being the identity on underlying \lnl polycategories).
\end{rem}

\begin{cor}\label{thm:soa}
  If \dD is small, then every \dD-sketch \cS has a weak \dD-reflection, i.e.\ a map $\cS \to \sdhat$ such that $\sdhat$ is \dD-complete and any map from \cS to a \dD-complete sketch factors through $\sdhat$.
\end{cor}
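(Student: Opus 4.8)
The plan is to apply the small object argument to the set $\cI_\dD$ of morphisms of \dsketch exhibited in the proof of \cref{thm:rep-inj}, exploiting the fact (noted above) that \dsketch is locally presentable whenever \dD is small.

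First I would record that, since \dD is small, the \dD-cones together with all of their expansions (\cref{defn:cones}) and all extensions $G_\Omega$ of those expansions form a set of maps between small entries-only \lnl polycategories; hence $\cI_\dD$ is a genuine set of morphisms of \dsketch whose sources are all small, i.e.\ $\la$-presentable for some regular cardinal $\la$. By \cref{thm:rep-inj}, a \dD-sketch is \dD-complete precisely when it is injective with respect to $\cI_\dD$. Now, given \cS, I would run the standard transfinite iteration: put $\cS_0 = \cS$; form $\cS_{\al+1}$ as the pushout of
\[
  \textstyle\coprod \cB \ot \coprod \cA \to \cS_\al,
\]
the coproducts ranging over all commutative squares with left edge $\cA\to\cS_\al$ and bottom edge $(\cA\to\cB)\in\cI_\dD$; and take colimits at limit ordinals. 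For $\la$ strictly larger than the presentability ranks of all sources $\cA$, the colimit $\sdhat := \colim_{\al<\la}\cS_\al$ is $\cI_\dD$-injective: any commutative square with one edge $\cA\to\sdhat$ and another $\cA\to\cB$ in $\cI_\dD$ factors through some stage $\cS_\al$ by presentability of $\cA$, and is then filled at stage $\al+1$ by construction. Hence $\sdhat$ is \dD-complete, and the structure map $\cS = \cS_0 \to \sdhat$ is the candidate weak reflection (see e.g.~\cite{ar:loc-pres} for this ``small object argument'').

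For the universal property, let $\cT$ be \dD-complete and $h:\cS\to\cT$ any morphism; I would build a factorization by transfinite recursion on the same chain. Set $h_0 = h$; extend $h_\al:\cS_\al\to\cT$ to $h_{\al+1}:\cS_{\al+1}\to\cT$ using the universal property of the pushout defining $\cS_{\al+1}$ together with the $\cI_\dD$-injectivity of $\cT$ (which, for each square $\cA\to\cS_\al\xto{h_\al}\cT$ along $\cA\to\cB$, supplies an extension $\cB\to\cT$); and take the induced map out of the colimit at limit stages. Then $h_\la:\sdhat\to\cT$ satisfies $h_\la\circ(\cS\to\sdhat) = h$, as desired.

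I expect the only genuine subtlety to be recognizing that this yields merely a \emph{weak} reflection: injectivity (unlike orthogonality) supplies extensions but not unique ones, so factorizations through $\sdhat$ need not be unique. Although the ``realized'' and ``saturated'' parts of \dD-completeness are orthogonality conditions (\cref{rmk:orth}), precompleteness is an honest injectivity condition, so $\cI_\dD$ as a whole cannot be taken orthogonal, and the reflection is only weak. The remaining points --- that $\cI_\dD$ really is a set with small sources (this is where smallness of \dD enters) and that the chain stabilizes at the chosen $\la$ --- are routine once the presentability ranks are fixed.
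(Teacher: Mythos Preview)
Your proposal is correct and follows essentially the same approach as the paper: both apply the standard small object argument to the set $\cI_\dD$ from \cref{thm:rep-inj}, forming a transfinite chain of pushouts in the locally presentable category \dsketch and extending maps into a \dD-complete $\cT$ stage by stage. Your version is somewhat more explicit about presentability ranks and about why the reflection is only weak, but the argument is the same.
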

\begin{proof}
  This is a standard construction applying to any small-injectivity class, known as Quillen's small object argument; see e.g.~\cite[2.1.14]{hovey:modelcats} or~\cite[10.5.16]{hirschhorn:modelcats} or~\cite[12.2.2]{riehl:cht}.
  Let $\cS_0 = \cS$.
  Given $\cS_n$, define inductively $\cS_{n+1}$ as the pushout
  \[
    \begin{tikzcd}
      \coprod_{\iota,u} \cA_{\iota} \ar[r] \ar[d] & \cS_n \ar[d] \\
      \coprod_{\iota,u} \cB_{\iota} \ar[r] & \cS_{n+1} \ulpullback
    \end{tikzcd}
  \]
  where the coproducts are over all $\iota:\cA\to\cB$ in the generating set $\cI_\dD$ and all $u:\cA \to \cS_n$.
  Continue the iteration into transfinite ordinals $n$ by taking colimits at limit stages.
  Then since $\dsketch$ is locally presentable, there is a sufficiently large ordinal $\ka$ such that any map $\cA \to \cS_\ka$, for any $i:\cA\to\cB$, factors through $\cS_n$ for some $n<\ka$, and hence extends to $\cB$ through $\cS_{n+1}$.
  Thus, if we define $\sdhat = \cS_\ka$, it is \dD-complete.
  Moreover, given a \dD-complete sketch $\cT$, we can extend a map $\cS\to \cT$ to each stage $\cS_n$ inductively, using the completeness of \cT at successor stages.
\end{proof}

The factorization $\sdhat \to \cT$ constructed in \cref{thm:soa} is not in general unique,
but we will show that it is
unique up to unique isomorphism.

There is an additional wrinkle, however: if $\dD$ contains operations such as $\hom,\duals$ that are contravariant in some arguments, then \dD-completion cannot be expected to behave well with respect to \emph{noninvertible} 2-cells.
Thus we have to formulate its universal property with respect to $\dsketch_g$, where $\sK_g$ denotes the underlying (2,1)-category of a 2-category $\sK$, containing only the invertible 2-cells.

\begin{thm}\label{thm:bicat-refl}
  For any small \lnl doctrine \dD and \dD-sketch \cS, there is a \dD-complete sketch $\sdhat$ and a map $\cS\to \sdhat$ such that for any \dD-complete sketch $\cP$, the precomposition functor $\dsketch_g(\sdhat,\cP) \to \dsketch_g(\cS,\cP)$ is a surjective equivalence of categories.
  In particular, the sub-2-category of \dD-complete sketches in $\dsketch_g$ (which, recall, is equivalent to $\dcat_g$) is pseudo-reflective.
\end{thm}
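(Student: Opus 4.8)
The plan is to promote the weak $\dD$-reflection $j:\cS\to\sdhat$ produced by \cref{thm:soa} to a genuine pseudo-reflection. By \cref{thm:soa} the functor $j^{*}:\dsketch_g(\sdhat,\cP)\to\dsketch_g(\cS,\cP)$ is already surjective on objects for every $\dD$-complete $\cP$, so it suffices to prove $j^{*}$ full and faithful; together these give a surjective equivalence, and the ``in particular'' clause is then exactly the definition of pseudo-reflectivity (combined with the earlier equivalence between $\dD$-complete sketches and $\dcat$). Recall from the proof of \cref{thm:rep-inj} that $\sdhat$ is built by the small object argument as a transfinite composite of pushouts of coproducts of the generating maps in $\cI_\dD$, which come in four families: $\cC'\into\cC$, $\cC'_\Omega\into\cC_\Omega$, the codiagonals $\cC_\Omega+_{\cC'_\Omega}\cC_\Omega\to\cC_\Omega$, and $\cC'_{\cong}\into\cC_{\cong}$. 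Since $\dsketch_g(-,\cP)$ carries these conical colimits to the corresponding limits of categories (products, pullbacks, transfinite inverse limits), and since full-and-faithful functors are stable under such limits --- a pullback of a fully faithful functor is fully faithful, and a tower of fully faithful functors has fully faithful limit projections, both routine --- everything reduces to one claim: \emph{for each $\iota:\cA\to\cB$ in $\cI_\dD$ and each $\dD$-complete $\cP$, precomposition $\dsketch_g(\cB,\cP)\to\dsketch_g(\cA,\cP)$ is full and faithful.}

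Three of the four families are easy. The maps $\cC'_{\cong}\into\cC_{\cong}$ are identities on underlying \lnl polycategories (they only change which lift of the doubled-vertex cone is designated proto-extremal), so precomposition along them is a full subcategory inclusion, hence fully faithful. The maps $\cC'_\Omega\into\cC_\Omega$ and the codiagonals add morphisms (the factorization $\chi$, respectively a second copy of $\cC_\Omega$) but no objects, so precomposition is injective on $2$-cells, i.e.\ faithful; fullness amounts to checking that a natural isomorphism defined on the pre-expansion is automatically natural at $\chi$. This follows because a morphism of sketches out of $\cC_\Omega$ sends the reduct of the cone to a $\pi$-extremal concrete cone: using the relation $\ftil=\chi\circ_K f$ and naturality at the abstract projections $f$, one reduces the $\chi$-naturality square to an equation of the form $u\circ_K F(f)=v\circ_K F(f)$ for all $f$, with $\pi u=\pi v$, and $\pi$-extremality of the reduct cone (via the expansion that adjoins a single new object of the same type as the vertex) forces $u=v$; realization of $\cP$ supplies the needed extension on objects.

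The substantive case is the precompleteness family $\cC'\into\cC$, where $\dsketch_g(\cC,\cP)$ is the groupoid of $\pi$-extremal lifts of the $\dD$-cone and $\dsketch_g(\cC',\cP)$ that of lifts of its reduct, and this is where the uniqueness results of \cref{sec:bifib} do the work. For faithfulness, if $\beta,\beta':F\Rightarrow G$ restrict to the same $2$-cell on $\cC'$, then $\gamma:=\beta'\beta^{-1}:G\Rightarrow G$ is the identity off the vertex, and its vertex component satisfies $\pi(\gamma_K)=\mathrm{id}$ and $\gamma_K\circ_K G(f)=G(f)$ for all abstract projections $f$; but \cref{thm:extremal-uniq} applied with $F:=G$ says the identity is the \emph{unique} such isomorphism, so $\gamma_K=\mathrm{id}$ and $\beta=\beta'$. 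For fullness, given $\alpha:F|_{\cC'}\cong G|_{\cC'}$, transport $G$ along $\alpha$ to a concrete cone $\bar G$ that agrees with $F$ on $\cC'$ and is isomorphic to $G$ via an isomorphism whose vertex component is an identity; $\bar G$ is again $\pi$-extremal since $\pi$-extremality is invariant under isomorphism of concrete cones over $\abs{\dD}$ (immediate from the definition, cf.\ \cref{thm:extremal-iso}), so \cref{thm:extremal-uniq} produces a canonical isomorphism $F(K)\cong\bar G(K)=G(K)$ over an identity, and pasting it with $\alpha$ gives the required lift.

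Finally one assembles: faithfulness of $j^{*}$ is a transfinite induction on the stages $\cS_n$ of the small object argument --- two $2$-cells agreeing on $\cS$ agree at successor stages by faithfulness along the generating maps plus the universal property of the pushout, and at limit stages since the hom-category there is an inverse limit --- and fullness is the dual induction, extending a given natural isomorphism over each $\cB_\iota$ by fullness along $\iota$, then gluing with the previous stage along $\cA_\iota$; invertibility of the extension is automatic because we work throughout in $\dsketch_g$. I expect the main obstacle to be exactly the precompleteness family: because $\dD$-completeness demands only \emph{injectivity} with respect to $\cC'\into\cC$ rather than orthogonality (unlike the realization and saturation maps, which are orthogonality maps by \cref{rmk:orth}), the factorization through $\sdhat$ is genuinely non-unique, and the content of the theorem is precisely that \cref{thm:extremal-uniq} and \cref{thm:extremal-iso} collapse this ambiguity to a contractible family of isomorphisms --- which is what makes the reflection pseudo rather than strict and forces the statement to be about $\dsketch_g$ rather than $\dsketch$.
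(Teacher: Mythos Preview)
Your proposal is correct and follows essentially the same approach as the paper: reduce via the transfinite-composite-of-pushouts structure of $\sdhat$ to showing that precomposition along each generator $\iota\in\cI_\dD$ is a surjective equivalence (equivalently, fully faithful, surjectivity on objects being immediate), then treat the four families of generators case by case. The paper's argument for the $\cC'\into\cC$ case is just an inline version of your appeal to \cref{thm:extremal-uniq} and \cref{thm:extremal-iso}, and its treatment of the remaining three families is the same ``bijective on objects and full'' observation you make; your final ``assembly'' paragraph is redundant with the reduction you already performed at the start.
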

\begin{proof}
  In \cref{thm:soa}, $\sdhat$ was constructed as a transfinite composite of pushouts of the generators.
  Since surjective equivalences are closed under pullbacks and inverse transfinite composites, it suffices (see e.g.~\cite[4.2.4]{hovey:modelcats}) to show that for any \dD-complete sketch $\pi:\cP\to\abs{\dD}$ and any morphism $\iota : \cA \to \cB$ in $\cI_\dD$, the induced map $\dsketch_g(\cB,\cP) \to \dsketch_g(\cA,\cP)$ is a surjective equivalence.
  Since it is always surjective on objects, it remains to prove that it is fully faithful.
  Referring to the construction of $\cI_\dD$, there are four cases we need to consider.

  When $\iota$ is an inclusion $\reduct{\cC}\into \cC$ for some \dD-cone $G:\cC\to\abs{\dD}$, we must show that given two $\pi$-extremal lifts $H,K:\cC\to\cP$ of $G$, any isomorphism $\al:H'\cong K'$ between their reducts $H',K' : \reduct{\cC}\to\cP$ can be uniquely extended to a compatible isomorphism $H\cong K$.
  By composing the transitions of $K$ with the components of $\al$ and their inverses (depending on the sign of the relevant signed object), we obtain the data for a pre-expansion of $H$ by a single object, namely the vertex of $K$.
  Thus, extremality of $H$ induces a map between the vertices of $H$ and $K$ (with direction depending on the sign of that vertex).
  Similarly, we obtain a map in the other direction, and the two are inverses.

  When $\iota$ is an inclusion $\preexpand{\cC}{\Psi}\into\expand{\cC}{\Psi}$, we must show that given two expansions $H,K:\expand{\cC}{\Psi} \to \cP$ of $\pi$-extremal lifts, any isomorphism $\al:H'\cong K'$ between their corresponding pre-expansions $H',K': \preexpand{\cC}{\Psi} \to \cP$ is also an isomorphism $H\cong K$.
  Since the inclusion $\preexpand{\cC}{\Psi}\into \expand{\cC}{\Psi}$ is bijective on objects, this is just an extra naturality condition with respect to the factorization morphism.
  But the two sides of this desired naturality square each fit into an expansion of $H$ whose expanders are those of $K$ composed with components of $\alpha$ or their inverses; hence they are equal.

  Finally, when $\iota$ is a codiagonal $\expand{\cC}{\Psi} +_{\preexpand{\cC}{\Psi}} \expand{\cC}{\Psi} \to \expand{\cC}{\Psi}$ or an inclusion $\cC_{\cong}'\into\cC_{\cong}$, full-faithfulness is automatic since these $\iota$'s are bijective on objects and full.
\end{proof}

\begin{prop}\label{thm:cplt-sort}
  For any sorted doctrine \dD and any well-sorted \dD-sketch \cS, the completion $\sdhat$ is also well-sorted.
\end{prop}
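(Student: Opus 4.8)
The plan is to read the property of being well-sorted off the explicit construction of \sdhat in \cref{thm:soa}. There \sdhat is obtained as $\cS_\ka$ for a transfinite chain $\cS = \cS_0 \to \cS_1 \to \cdots$ in which $\cS_{n+1}$ is obtained from $\cS_n$ by pushing out a coproduct of generators from $\cI_\dD$, and colimits are taken at limit stages. I would prove, by transfinite induction on $n$, the assertion $(\ast_n)$: \emph{every object of $\cS_n$ lying over a derived sort $R$ is the vertex of some proto-extremal lift of the sorting cone $G_R$}. Taking $n = \ka$ then says precisely that \sdhat is well-sorted. The base case $(\ast_0)$ is the hypothesis that $\cS$ is well-sorted. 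The limit step is immediate: if $\lambda$ is a limit ordinal then $\cS_\lambda = \colim_{n < \lambda}\cS_n$, every object of $\cS_\lambda$ is the image of an object of some earlier $\cS_n$, and the leg $\cS_n \to \cS_\lambda$ is a morphism of \dD-sketches, hence preserves proto-extremal cones, so a witnessing lift for that earlier object pushes forward to one in $\cS_\lambda$.

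The real content is the successor step, which amounts to bookkeeping over the four families of generators in $\cI_\dD$ listed in the proof of \cref{thm:rep-inj}. Of these, the inclusions $\cC'_\Omega \into \cC_\Omega$, the codiagonals $\cC_\Omega +_{(\cC'_\Omega)} \cC_\Omega \to \cC_\Omega$, and the inclusions $\cC'_{\cong} \into \cC_{\cong}$ are all bijective on objects: by \cref{defn:cones} an expansion adjoins only the factorization morphism $\chi$, a codiagonal collapses two copies of $\chi$, and by \cref{rmk:orth} the map $\cC'_{\cong} \into \cC_{\cong}$ is the identity on underlying \lnl polycategories. So pushing any of these out introduces no object of $\cS_{n+1}$ that is not already the image of an object of $\cS_n$. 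An inclusion $\cC' \into \cC$ arising from a \dD-cone $G : \cC \to \abs{\dD}$ adjoins exactly one new object, the underlying object of the vertex $K$, and $G$ sends it to the signed object $G(K)$; by the definition of a sorted doctrine the sort underlying $G(K)$ is primitive unless $G = G_R$ is the sorting cone of some derived sort $R$. Hence the only objects of $\cS_{n+1}$ over a derived sort $R$ that are not images of objects of $\cS_n$ are the vertices produced by pushing out $\cC'_R \into \cC_R$ along some $u : \cC'_R \to \cS_n$; for each such pushout the composite $\cC_R \to \cS_{n+1}$ is the image under the pushout leg of the distinguished proto-extremal cone carried by the generating \dD-sketch $\cC_R$, hence a proto-extremal lift of $G_R$, and its vertex is precisely this new object. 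Objects of $\cS_{n+1}$ that are images of objects of $\cS_n$ inherit a witnessing lift because $\cS_n \to \cS_{n+1}$ preserves proto-extremal cones. This gives $(\ast_{n+1})$, completing the induction.

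I do not anticipate a genuine obstacle; the argument is a trace through the generating set once it is in hand. The single point that needs to be stated carefully is the claim that every generator except the $\cC' \into \cC$'s leaves the object set unchanged, which is immediate from \cref{defn:cones} and \cref{rmk:orth} but should be recorded explicitly, so that the list of ``new objects over a derived sort'' is confined to the sorting-cone vertices. A slicker but more indirect alternative would be to apply the well-sorted coreflector of \cref{thm:sort-corefl} to \sdhat, note that the coreflection is \dD-complete (again \cref{thm:sort-corefl}), receives a map from the well-sorted $\cS$, and is therefore itself a \dD-completion of $\cS$, hence equivalent to \sdhat by the essential uniqueness of \dD-completions (\cref{thm:bicat-refl}), and finally observe that well-sortedness transfers across such equivalences (retwisting the witnessing lifts by the coherence isomorphisms and using saturation); but the direct induction avoids that uniqueness input and is more transparent.
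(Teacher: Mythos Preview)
Your direct transfinite-induction argument is correct: the key observation that the only generators in $\cI_\dD$ that add objects are the $\cC'\into\cC$, and that by the definition of sorted doctrine any such \cC with vertex over a derived sort must be a sorting cone, is exactly what makes the successor step go through. The bookkeeping you outline for the other three generator families is accurate (and your citation of \cref{defn:cones} and \cref{rmk:orth} for their object-bijectivity is to the point).

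The paper, however, takes precisely the ``slicker alternative'' you sketch at the end: apply the well-sorted coreflector of \cref{thm:sort-corefl} to \sdhat, factor $\cS\to\sdhat$ through the coreflection $(\sdhat)'$ (since \cS is well-sorted), observe that $(\sdhat)'$ is \dD-complete, and then use the universal property of \sdhat from \cref{thm:bicat-refl} to produce a section $\sdhat\to(\sdhat)'$ up to isomorphism, which forces \sdhat to be well-sorted (via saturation, as you note). Your approach is more elementary in that it avoids invoking \cref{thm:bicat-refl} and instead opens up the small object argument directly; the paper's is shorter and more conceptual, reusing the coreflection machinery already in place. Both are valid; the paper's choice packages the argument so that no inspection of $\cI_\dD$ is needed here.
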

\begin{proof}
  Let \cS be well-sorted, and let $(\sdhat)' \to \sdhat$ be the well-sorted coreflection of $\sdhat$.
  Since \cS is well-sorted, the map $\cS \to\sdhat$ factors through $(\sdhat)'$.
  But by \cref{thm:sort-corefl}, $(\sdhat)'$ is \dD-complete, so the universal property of $\sdhat$ induces a map $\sdhat \to (\sdhat)'$ that is a section of the coreflection, up to isomorphism.
  This implies that $\sdhat$ is also well-sorted.
\end{proof}

\section{The sequent calculus of a doctrine}
\label{sec:seq-calc}

Let \dD be an \lnl doctrine and \cS an \lnl polycategory with a map $\pi : \cS \to \abs{\dD}$, which we regard as a \dD-sketch with no proto-extremal cones.
Then \cref{thm:bicat-refl} implies that \cS generates a free \dD-category $\sdhat$.
We now extract a sequent calculus that presents such free \dD-categories from the proof of \cref{thm:bicat-refl}.

For simplicity, for now we suppose that
  \dD is unsorted,
  $\abs{\dD}$ is subterminal, and
  all the cones of \dD are \emph{discrete} (have no nonidentity abstract transitions) and also \emph{finite}.
  This restriction on cones includes cones for universal morphisms, as in \cref{eg:extremal-univ}, and also for finite products and coproducts, as in \cref{eg:extremal-limit}.
These are the primary universal properties that are traditionally considered in logic.
Under these assumptions, we can replace the construction of \cref{thm:soa} by the following simplified version.
\begin{enumerate}
\item First perform the small object argument starting at $\cS_0 = \cS$, using only the inclusions $\reduct{\cC}\into\cC$ for \dD-cones \cC, and when $n>0$ restricting the coproduct to include only the morphisms $u:\reduct{\cC}\to\cS_n$ that do not factor through $\cS_{n-1}$.
  After a countable iteration, this produces a precomplete sketch $\cS_\om$.\label{item:ssoa1}
\item Next perform the small object argument starting at $\cS_\om$, using only the inclusions $\preexpand{\cC}{\Psi}\into\expand{\cC}{\Psi}$ and their codiagonals $\expand{\cC}{\Psi} +_{\preexpand{\cC}{\Psi}} \expand{\cC}{\Psi} \to \expand{\cC}{\Psi}$.
  After a further countable iteration, this produces a realized sketch $\cS_{\om+\om}$.
  Moreover, since these inclusions and codiagonals are bijective on objects and each $\reduct{\cC}$ is discrete, $\cS_{\om+\om}$ is still precomplete.\label{item:ssoa2}
\item Finally, perform one step of the small object argument using the map $\cC_{\cong}'\into\cC_{\cong}$.
  This is sufficient to produce a saturated sketch $\sdhat = \cS_{\om+\om+1}$, which is still precomplete and realized, and hence \dD-complete.\label{item:ssoa3}
\end{enumerate}
In particular, these changes make the argument completely constructive.
(The negation in~\ref{item:ssoa1} may not seem constructive, but the inclusion of $\cS_{n-1}$ into $\cS_{n}$ is decidable on objects because each $\reduct{\cC}\into\cC$ is.)

\begin{figure}
  \centering
  \begin{subfigure}{\textwidth}
    \begin{mathpar}
      \infer{A \in \cS^\tau}{A \ttype}
      \and
      \infer{\cC\text{ a \dD-cone} \\ \reduct{\cC} = \{r_1^{\tau_1},\dots, r_n^{\tau_n}\} \\
        R_1 \istype{\tau_1} \\ \cdots \\ R_n \istype{\tau_n}}
      {\boc[R_1,\dots,R_n] \istype{\tau_\cC}}      
    \end{mathpar}
    \caption{Type-forming rules}
    \label{fig:formation}
  \end{subfigure}
  \\[1em]
  \begin{subfigure}{\textwidth}
    \centering
    \begin{mathpar}
      \infer{R \ttype}{\vdash R^-, R^+}
      \and
      \infer{\vdash \Phi, K \\ \vdash K\D, \Psi}{\vdash \Phi,\Psi}
      \and
      \infer{\vdash \Psi \\ \si : \Phi \to \Psi \text{ a structural map}}{\vdash \Phi}
    \end{mathpar}
    \caption{Structural rules}
    \label{fig:struc}
  \end{subfigure}
  \\[1em]
  \begin{subfigure}{\textwidth}
    \begin{mathpar}
      \infer{f \in \cS(\Phi)}{\vdash \Phi}
    \end{mathpar}
    \caption{Generator rule}
    \label{fig:gen}
  \end{subfigure}
  \\[1em]
  \begin{subfigure}{\textwidth}
    \centering
    \begin{mathpar}
    \mprset{vskip=1ex}
      \infer{\cC\text{ a \dD-cone with vertex } r^\ep \\ \reduct{\cC} = \{r_1^{\tau_1},\dots, r_n^{\tau_n}\} \\\\
        R_1 \istype{\tau_1} \\ \cdots \\ R_n \istype{\tau_n} \\
        f\in \cC(r_{i_1}^{\ep_1},\dots,r_{i_\ell}^{\ep_\ell},r^{\ep}) \text{ an abstract projection}
      }
      {\vdash R_{i_1}^{\ep_1},\,\dots,\, R_{i_\ell}^{\ep_\ell},\, \boc[R_1,\dots,R_n]^{\ep}}
    \end{mathpar}
    \caption{Noninvertible logical rule}
    \label{fig:noninv}
  \end{subfigure}
  \\[1em]
  \begin{subfigure}{\textwidth}
    \centering
    \begin{mathpar}
    \mprset{vskip=1ex}
      \infer{\cC\text{ a \dD-cone with vertex } r^\ep \text{ of class $\tau_\cC$} \\ \reduct{\cC} = \{r_1^{\tau_1},\dots, r_n^{\tau_n}\} \\\\
        R_1 \istype{\tau_1} \\ \cdots \\ R_n \istype{\tau_n} \\
        S_1 \istype{\sigma_1} \\ \cdots \\ S_m \istype{\sigma_m} \\\\
        \abs{\dD}(\tau_\cC^{-\ep}, \sigma_1^{\eta_1},\dots, \sigma_m^{\eta_m}) \neq \emptyset\\\\
        \big\{\vdash R_{i_1}^{\ep_1},\,\dots,\, R_{i_\ell}^{\ep_\ell},\, S_1^{\eta_1},\,\dots,\, S_m^{\eta_m} \big\}_{f\in \cC(r_{i_1}^{\ep_1},\dots,r_{i_\ell}^{\ep_\ell},r^\ep) \text{ an abstract projection}}
      }{\vdash \boc[R_1,\dots,R_n]^{-\ep}, S_1^{\eta_1},\dots, S_m^{\eta_m}}
    \end{mathpar}
    \caption{Invertible logical rule}
    \label{fig:inv}
  \end{subfigure}
  \caption{LNL Sequent calculus}
  \label{fig:seqcalc}
\end{figure}

We can now describe $\sdhat$ using a sequent calculus, defined formally in \cref{fig:seqcalc}.
There are two classes of types, linear and nonlinear, written $A\ltype$ and $X\nltype$.
Generically, we write $R \ttype$ for an arbitrary class $\tau \in \{\mathrm{L},\mathrm{NL}\}$.
The first rule in \cref{fig:formation} says that every object of \cS determines a type of the appropriate class.

By assumption, the reduct $\reduct{\cC}$ of each \dD-cone is a discrete \lnl polycategory with finitely many objects.
We assume the objects of each $\reduct{\cC}$ are ordered as $\{r_1^{\tau_1},\dots,r_n^{\tau_n}\}$, the notation meaning that $r_i$ is of class $\tau_i$, and the vertex $k$ of class $\tau_\cC$.
The second rule in \cref{fig:formation} says that every such cone induces an operation on types.
The notation $\boc[R_1,\dots,R_n]$ is chosen to be generic over the cone \cC, but for particular choices of \cC we use the notations of \cref{sec:lnl-polycategories}, e.g.\ $A\ten B$, $\foc X$, $X \rtimes A$, $A \with B$, etc.

\begin{prop}
  There is a bijection between the valid judgments $R \ttype$ and the $\tau$-objects of $\sdhat$.
\end{prop}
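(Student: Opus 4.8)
The plan is to match the inductive structure of the type-forming rules in \cref{fig:formation} against the iterative construction of $\sdhat$ from \cref{thm:soa}, or rather its simplified three-stage version used to extract the sequent calculus. First I would observe that, since the objects of $\sdhat$ are never altered by the orthogonality/saturation steps~\ref{item:ssoa2} and~\ref{item:ssoa3} (those inclusions and codiagonals are bijective on objects), the $\tau$-objects of $\sdhat$ are exactly the $\tau$-objects of the precomplete sketch $\cS_\om$ produced in step~\ref{item:ssoa1}. So it suffices to establish a bijection between valid judgments $R\ttype$ and the $\tau$-objects of $\cS_\om$.

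Next I would set up a stratification of both sides by ``depth''. On the syntactic side, each derivation of $R\ttype$ has a height: the first rule of \cref{fig:formation} (an object of $\cS$) has height $0$, and the second rule raises the height by one. On the semantic side, $\cS_\om = \colim_n \cS_n$ where $\cS_{n+1}$ is obtained from $\cS_n$ by a single pushout adjoining, for each \dD-cone $\cC$ and each lift $u:\cC'\to\cS_n$ of its reduct \emph{not} factoring through $\cS_{n-1}$, a fresh vertex object (together with the projection and transition morphisms). I would prove by induction on $n$ that the $\tau$-objects of $\cS_n$ are in bijection with the valid judgments $R\ttype$ of height $\le n$, compatibly with the inclusions $\cS_n\hookrightarrow\cS_{n+1}$. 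The base case $n=0$ is the first rule of \cref{fig:formation}: objects of $\cS$ of class $\tau$ correspond to height-$0$ derivations. For the inductive step, the new objects of $\cS_{n+1}$ are precisely one for each pair $(\cC, u)$ with $u:\cC' = \{r_1^{\tau_1},\dots,r_n^{\tau_n}\}\to\cS_n$ a tuple of objects $(R_1,\dots,R_n)$ of $\cS_n$ that is genuinely new at stage $n$ — and such a tuple, by the inductive hypothesis, is exactly a list of derivations of $R_i\istype{\tau_i}$ of height $\le n$ with at least one of height exactly $n$; this matches the premises of the second \cref{fig:formation} rule and produces $\boc[R_1,\dots,R_n]$ of class $\tau_\cC$, of height $n+1$. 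Passing to the colimit over $n$ gives the claimed bijection with $\cS_\om$, hence with $\sdhat$.

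The main point requiring care — and the step I expect to be the real obstacle — is well-definedness and injectivity of the correspondence, i.e.\ ensuring that two ``different'' derivations that should name the same object really do, and conversely. The pushout construction of $\cS_{n+1}$ freely adjoins a \emph{formal} new vertex for each $(\cC,u)$, so distinct pairs $(\cC,u)$ give distinct objects; on the syntactic side, the operation symbol $\boc[\blank]$ is generic over \cC, so a judgment records the pair $(\cC, (R_1,\dots,R_n))$ as raw data with no quotient. Thus the freeness of the pushout is exactly what makes the type syntax a \emph{free} structure and the bijection hold on the nose; I would spell out that the ``restrict the coproduct to $u$ not factoring through $\cS_{n-1}$'' clause in step~\ref{item:ssoa1} is precisely what prevents double-counting, matching the convention that in the height stratification a derivation of height $n+1$ has at least one immediate subderivation of height exactly $n$. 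One should also note that because $\abs{\dD}$ is subterminal (the standing assumption for this section), the functor $\pi:\cS\to\abs{\dD}$ imposes no extra data on objects beyond their class $\tau$, so nothing on the semantic side is being forgotten. Finally I would remark that this bijection is the object-level shadow of the stronger statement, proved subsequently, that derivations of sequents $\vdash\Phi$ biject with morphisms of $\sdhat$.
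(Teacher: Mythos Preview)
Your proposal is correct and follows essentially the same argument as the paper: define a height on type derivations, and show by induction on $n$ that height-$\le n$ judgments biject with objects of $\cS_n$, using that the new objects at stage $n+1$ are indexed exactly by the lifts $u:\cC'\to\cS_n$ not factoring through $\cS_{n-1}$. You are more explicit than the paper in noting that steps~\ref{item:ssoa2} and~\ref{item:ssoa3} do not change objects, which is a useful clarification; one small inaccuracy is your final remark that derivations of $\vdash\Phi$ \emph{biject} with morphisms of $\sdhat$ --- the paper only proves a surjection, with the bijection holding after a further quotient by the $\beta/\eta$ equivalence relation described afterwards.
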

\begin{proof}
  Define the \emph{height} of $R \ttype$ recursively: the height of an object of \cS is zero, while that of $\boc[R_1,\dots,R_n]$ is one more than the maximum height of $R_1,\dots, R_n$.
  (If $n=0$, the height of $\boc[\,]$ is 1.)
  I claim that there is a bijection between the valid judgments $R \ttype$ of height $\le n$ and the $\tau$-objects of $\cS_n$.
  This is true for $n=0$.
  The objects of $\cS_{n+1}$ are those of $\cS_n$ plus a new vertex for each $u:\reduct{\cC}\to \cS_n$ not factoring through $\cS_{n-1}$.
  But the latter are the applications of the $\boc$-rule with at least one premise of height $n$, hence whose conclusion has height $n+1$.
\end{proof}

We denote the sequents in entries-only style as $\vdash \Phi$, where $\Phi$ is an admissible list of signed types, defined analogously to the semantic case in \cref{sec:bifib}.
The structural rules are shown in \cref{fig:struc}.
The first is the identity rule and the second is the cut rule.
The third incorporates exchange for all types, plus contraction and weakening for nonlinear types, as in \cref{sec:bifib}.
Similarly, the generator rule in \cref{fig:gen} says that every morphism of \cS induces a derivation of a sequent.

We may write $\Th \mid \Gm \vdash \De$ for $\vdash \Th^-, \Gm^-, \De^+$, and $\Theta\vdash X$ for $\vdash \Th^-,X^+$.
In this notation, the identity and cut rules multifurcate into linear and nonlinear versions:
\begin{mathpar}
  \infer{A\ltype}{\cdot \mid A \vdash A}\and
  \infer{X \nltype}{X \vdash X}\and
  \infer{\Upsilon \vdash X \\ \Theta,X \vdash Y}{\Theta,\Upsilon \vdash Y}\and
\end{mathpar}
\begin{mathpar}
  \infer{\Theta'\mid\Gamma' \vdash \Delta',A \\ \Theta\mid \Gamma,A\vdash \Delta}{\Theta,\Theta' \mid \Gamma,\Gamma' \vdash \Delta,\Delta'}\and
  \infer{\Upsilon\vdash X \\ \Theta,X\mid \Gamma\vdash \Delta}{\Theta,\Upsilon \mid \Gamma\vdash\Delta}.
\end{mathpar}

We divide the logical rules into \emph{invertible} (right rules for negative types and left rules for positive types) and \emph{noninvertible} (left rules for negative types and right rules for positive types).
The generic noninvertible rule is in \cref{fig:noninv}.
Here $\ep$ and the $\ep_j$'s are signs $+,-$.
For instance, if \cC is the cone for $\ten$, with objects $a,b$ and vertex $c$, there is one abstract projection $f\in \cC(a^-,b^-,c^+)$ and the rule becomes
\[\infer{A \ltype \\ B\ltype}{\cdot\mid A,B\vdash A\ten B}. \]
If \cC is the cone for $\with$,
with objects $a,b$ and vertex $c$, there are two abstract projections $f\in \cC(a^+,c^-)$ and $g\in \cC(b^+,c^-)$, and the rule becomes two:
\begin{mathpar}
  \infer{A \ltype \\ B\ltype}{\cdot \mid A\with B \vdash A}\and
  \infer{A \ltype \\ B\ltype}{\cdot \mid A\with B \vdash B}.
\end{mathpar}
The rules for the modalities are
\begin{mathpar}
  \infer{X \nltype}{X \mid \cdot \vdash \foc X}\and
  \infer{A \ltype}{\uoc A\mid \cdot \vdash A} \and
  \infer{X \nltype}{X \mid \fwn X \vdash \cdot} \and
  \infer{A \ltype}{\uwn A\mid A \vdash\cdot}
\end{mathpar}

Unlike noninvertible rules in most common sequent calculi, ours does not build in a cut.
But we can always apply a cut afterwards, since the latter is primitive in our system.
(We leave cut-elimination for future study.)
Since the modalities are the most novel aspect of this calculus, we list their derived cut-containing rules:
\begin{mathpar}
  \infer{\Theta \vdash X}{\Theta \mid \cdot \vdash \foc X}\and
  \infer{\Theta\mid \Gamma,A \vdash \Delta}{\Theta,\uoc A\mid \Gamma \vdash \Delta} \and
  \infer{\Theta\vdash X}{\Theta \mid \fwn X \vdash \cdot} \and
  \infer{\Theta\mid \Gamma \vdash\Delta,A}{\Theta,\uwn A\mid \Gamma \vdash\Delta}.
\end{mathpar}
If $\abs{\dD}=\mathsc{lnlmulti}$, so $\Delta$ is a singleton, these rules for $\foc$ and $\uoc$ specialize to the noninvertible rules of~\cite{benton:lnl}.
If instead $\abs{\dD}=\mathsc{cbpv}$, so $\Delta$ is a singleton and $\Gamma$ is empty, we obtain the rules of~\cite{levy:adj-cbpv}.

\begin{prop}\label{thm:surj-noninv}
  There is a surjection from the derivations of $\vdash \Phi$ using only the structural, generator, and noninvertible rules to the hom-set $\cS_\om(\Phi)$.
\end{prop}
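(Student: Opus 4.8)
The plan is to define an interpretation of derivations as morphisms of $\cS_\om$ and then verify it is surjective, working stage by stage along the countable iteration that builds $\cS_\om$. By recursion on derivations, I would send each derivation $d$ of $\vdash\Phi$ that uses only the structural, generator, and noninvertible logical rules to a morphism $\llbracket d\rrbracket\in\cS_\om(\Phi)$: the identity rule goes to $1_R$, the cut rule to the composition $\circ_K$, the structural rule to the corresponding structural action, and the generator rule to the image under $\cS=\cS_0\to\cS_\om$ of the named morphism of \cS. For the noninvertible logical rule attached to a \dD-cone $\cC$ with reduct objects $r_1^{\tau_1},\dots,r_n^{\tau_n}$, vertex $r^\ep$, chosen valid types $R_1,\dots,R_n$, and an abstract projection $f\in\cC(r_{i_1}^{\ep_1},\dots,r_{i_\ell}^{\ep_\ell},r^\ep)$: by the preceding correspondence between valid judgments $R_i\ttype$ and objects of the stages (a judgment of height $\le m$ being an object of $\cS_m$), the $R_i$ are all objects of $\cS_m$ for $m$ the largest of their heights (zero if $n=0$), so they assemble into a map $u:\cC'\to\cS_m$; I send $d$ to the morphism $\overline{f}_u$ that is freely adjoined to $\cS_m$, corresponding to $f$ and $u$, in the pushout forming $\cS_{m+1}$, regarded inside $\cS_\om$. (The displayed conclusion is an admissible list, since functors of \lnl polycategories preserve the linear/nonlinear partition and the signs, and $(\Phi,K)$ is admissible in $\cC$.)

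To prove surjectivity, recall that $\lnlpoly$ is locally presentable, so the sequential colimit $\cS_\om=\colim_{n<\om}\cS_n$ is computed on underlying objects and hom-sets; hence every morphism of $\cS_\om$ is the image of a morphism of some $\cS_n$, and it suffices to prove by induction on $n$ that the image in $\cS_\om$ of any morphism of $\cS_n$ is of the form $\llbracket d\rrbracket$. The base case $n=0$ is exactly the generator rule. For the inductive step, $\cS_{n+1}$ is the pushout in $\lnlpoly$ of $\cS_n\leftarrow\coprod\cC'\into\coprod\cC$, with coproducts over the \dD-cones $\cC$ and the maps $u:\cC'\to\cS_n$ admitted at this stage. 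Since by \cref{defn:cones} each inclusion $\cC'\into\cC$ merely adjoins the vertex object together with the pairwise non-composable abstract projections, the pushout adjoins to $\cS_n$, for each such $(\cC,u)$, one new (vertex) object and one new generating morphism per abstract projection of $\cC$, subject to no further relations. By the standard term-model description of the free \lnl polycategory on a given one with added objects and morphism generators, every morphism of $\cS_{n+1}$ is built from morphisms of $\cS_n$ and these new generators by iterated single cuts, structural actions, and identities; I would then induct on such a presentation. A morphism coming from $\cS_n$ is handled by the inductive hypothesis; each new abstract-projection generator $\overline{f}_u$ is literally the conclusion of an instance of the noninvertible logical rule (with $R_i$ the valid type corresponding to the object $u(r_i)$ of $\cS_n$); and cuts, structural actions, and identities are realized by the cut, structural, and identity rules, using that the comparison functors $\cS_n\to\cS_{n+1}\to\cS_\om$ preserve these operations. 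This yields a derivation $d$ whose interpretation equals the image of the given morphism, completing the induction.

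The step I expect to be the main obstacle is the term-model description of the pushout $\cS_{n+1}$ over $\cS_n$ used above: that its morphisms are precisely the formal iterated composites of $\cS_n$-morphisms with the freshly adjoined abstract projections, with no others and no surprises arising from the partial, single-cut nature of composition in $\lnlpoly$. This is a routine but slightly delicate fact about free constructions and pushouts in categories of (entries-only \lnl) polycategories; once it is granted, what remains is a bookkeeping match between the recursive structure of derivations and that of polycategorical composites, with the structural rule of \cref{fig:struc} absorbing structural-map actions exactly as on the semantic side in \cref{sec:bifib}. A couple of minor points also need checking: that admissibility of the cone's entries is preserved under the maps $u$, and that restricting the stage-$(n{+}1)$ coproduct to maps not factoring through $\cS_{n-1}$ loses no abstract projections — every adjoined generator $\overline{f}_u$ still arises at the earliest stage at which $u$ first appears, and is still covered there by the noninvertible logical rule.
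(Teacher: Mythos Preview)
Your proposal is correct and follows essentially the same approach as the paper: define the interpretation by recursion on derivations, then prove surjectivity by induction on the stage $n$, using that $\cS_{n+1}$ is a pushout whose morphisms are generated under the \lnl-polycategory operations from those of $\cS_n$ together with the newly adjoined abstract projections. The paper simply asserts the pushout description that you flag as the main obstacle, so your version is if anything more explicit about what needs checking.
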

\begin{proof}
  Such a function is defined by induction on derivations: the structural rules use that $\cS_\om$ is an \lnl polycategory, the generator rule uses the functor $\cS \to \cS_\om$, and the noninvertible rule uses the images of abstract projections under the proto-extremal cones of $\cS_\om$, which exist (by construction, in fact uniquely) since it is precomplete.
  We show inductively that it is surjective onto morphisms in $\cS_n$.

  For $n=0$ this follows from the generator rule.
  Since $\cS_{n+1}$ is a pushout, its morphisms are generated by the operations in an \lnl polycategory (identities, composition, and structural actions) from those of $\cS_n$ and those of the cones \cC.
  The latter arise from the noninvertible rules, while the \lnl polycategory operations are reflected by the structural rules.
\end{proof}

Finally, the generic invertible rule is shown in figure \cref{fig:inv}, where $-\ep$ reverses a sign.
The requirement $\abs{\dD}(\tau_\cC^{-\ep}, \sigma_1^{\eta_1},\dots, \sigma_m^{\eta_m}) \neq \emptyset$ ensures that we do not produce sequents not allowed by $\abs{\dD}$, e.g.\ the universal properties of limits and colimits are restricted as necessary in an \lnl multicategory.
(Recall we are assuming $\abs{\dD}$ to be subterminal, so its nonempty homsets are singletons.)

For instance, if \cC is the cone for $\ten$ as above, the rule becomes
\[\infer{\vdash A^-,B^-,\Psi}{\vdash (A\ten B)^-,\Psi} \qquad = \qquad
  \infer{\Theta \mid \Gamma, A,B \vdash \Delta}{\Theta \mid \Gamma, A\ten B \vdash \Delta}
\]
while if \cC is the cone for $\with$ as above, the rule becomes
\[\infer{\vdash A^+,\Psi \\ \vdash B^+,\Psi}{\vdash (A\with B)^+, \Psi}
  \qquad = \qquad
  \infer{\Theta \mid \Gamma \vdash \Delta,A \\ \Theta\mid\Gamma\vdash\Delta,B}{\Theta\mid\Gamma\vdash\Delta,A\with B.}
\]
Similarly,
the rules for other common connectives such as $\hom,\oplus,\unit,\counit,\coten,\times,\to,1$ specialize to the usual ones for classical or intuitionistic multiplicative-additive linear logic or intuitionistic nonlinear logic.

For the modalities, the invertible rules are:
\begin{mathpar}
  \infer{\Theta,X \mid \Gamma\vdash \Delta}{\Theta\mid \Gamma,\foc X \vdash \Delta}\and
  \infer{\Theta \mid \cdot \vdash A}{\Theta \vdash \uoc A}\and
  \infer{\Theta,X \mid \Gamma\vdash \Delta}{\Theta\mid \Gamma \vdash \Delta,\fwn X}\and
  \infer{\Theta \mid A \vdash \cdot}{\Theta \vdash \uwn A}\and
\end{mathpar}
As before, if $\abs{\dD}=\mathsc{lnlmulti}$ or $\abs{\dD}=\mathsc{cbpv}$, these rules for $\foc$ and $\uoc$ specialize to those of~\cite{benton:lnl} or~\cite{levy:adj-cbpv} respectively.
Similarly, the rules for $\eechom$ and $\rtimes$, with appropriate cuts added:
\begin{mathpar}
  \infer{\Theta \vdash A\eechom B \\ \Theta'\mid \Gamma \vdash A}{\Theta,\Theta' \mid \Gamma \vdash B}\and
  \infer{\Theta \mid A \vdash B}{\Theta \vdash A\eechom B}\\
  \infer{\Theta \vdash X \\ \Theta'\mid\Gamma\vdash A}{\Theta,\Theta' \mid\Gamma \vdash X\rtimes A}\and
  \infer{\Theta\mid\Gamma \vdash X\rtimes A \\ \Theta',X \mid \Gamma', A \vdash \Delta}{\Theta,\Theta' \mid \Gamma, \Gamma' \vdash \Delta}\and
\end{mathpar}
specialize when $\abs{\dD}=\mathsc{ecbv}$ (so $\Gamma$ is a singleton and $\Gamma'=\emptyset$) to those of~\cite{ms:lin-state} (modulo changes of notation, and additive maintenance for the nonlinear context).

\begin{prop}
  There is a surjection from derivations of $\vdash\Phi$, in the full sequent calculus of \cref{fig:seqcalc}, to the hom-set $\sdhat(\Phi)$.
\end{prop}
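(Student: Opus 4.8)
The plan is to mimic the proof of \cref{thm:surj-noninv}, adding one clause to the inductive construction of the surjection --- the one handling the invertible logical rule --- and then extending the surjectivity argument along the two further phases of the construction of $\sdhat$.

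First I would define the map from derivations of $\vdash\Phi$ to $\sdhat(\Phi)$ by recursion on derivations. The structural, generator, and noninvertible logical rules are interpreted exactly as in \cref{thm:surj-noninv}: respectively by the \lnl polycategory structure of $\sdhat$, the functor $\cS\to\sdhat$, and the image of the abstract projection $f$ under the distinguished proto-extremal cone over $\cC$ with reduct $\{R_1^{\tau_1},\dots,R_n^{\tau_n}\}$ and vertex the object denoted $\boc[R_1,\dots,R_n]$ (available since $\sdhat$ is precomplete). For the invertible logical rule of \cref{fig:inv}, I would observe that its premises --- a derivation of $\vdash R_{i_1}^{\ep_1},\dots,R_{i_\ell}^{\ep_\ell},S_1^{\eta_1},\dots,S_m^{\eta_m}$ for each abstract projection $f$, plus the typing premises $R_i\istype{\tau_i}$ and $S_j\istype{\sigma_j}$ --- assemble, via the recursion, into a lift $\cC_\Omega'\to\sdhat$ of the pre-expansion (\cref{defn:cones}) determined by $\Omega=(S_1^{\eta_1},\dots,S_m^{\eta_m})$, extending the proto-extremal cone just described; the side condition $\abs{\dD}(\tau_\cC^{-\ep},\sigma_1^{\eta_1},\dots,\sigma_m^{\eta_m})\neq\emptyset$ ensures $(K\D,\Omega)$ is admissible and that this is a genuine expansion over $\abs{\dD}$. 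Since $\sdhat$ is realized --- and realized sketches form a small-\emph{orthogonality} class (\cref{rmk:orth}) --- there is a \emph{unique} factorization $\chi\in\sdhat(\boc[R_1,\dots,R_n]^{-\ep},S_1^{\eta_1},\dots,S_m^{\eta_m})$, which I would take as the image of the rule's conclusion. Uniqueness makes the whole assignment well-defined.

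Next I would prove surjectivity along the three-phase construction of $\sdhat$: a countable iteration producing a precomplete sketch $\cS_\om$, a further countable iteration producing a realized (still precomplete) sketch $\cS_{\om+\om}$, and a final step producing the saturated --- hence \dD-complete --- sketch $\sdhat=\cS_{\om+\om+1}$. By \cref{thm:surj-noninv} the derivations using only structural, generator, and noninvertible rules already surject onto $\cS_\om(\Phi)$, and these are among the derivations of the full calculus; so the image of my map contains the image of $\cS_\om(\Phi)\to\sdhat(\Phi)$. I would then show, by transfinite induction quantified over all $\Phi$, that the image contains that of each $\cS_{\om+n}(\Phi)$. At a successor stage $\cS_{\om+n+1}$ is a pushout of copies of $\cC_\Omega'\into\cC_\Omega$ and their codiagonals along maps out of $\cS_{\om+n}$; the codiagonals impose only equations, while each copy of $\cC_\Omega'\into\cC_\Omega$ adjoins a single new morphism $\chi$, determined by a map $u:\cC_\Omega'\to\cS_{\om+n}$. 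The expander-components $u(\ftil)$ of such a $u$ are morphisms of $\cS_{\om+n}$, hence --- by the induction hypothesis, the type-forming rules, and the bijection between typing judgments and objects of $\sdhat$ --- images of derivations, and feeding these into the invertible logical rule produces a derivation with image $\chi$. Every remaining morphism of $\cS_{\om+n+1}$ is generated from morphisms of $\cS_{\om+n}$ and such $\chi$'s by the \lnl polycategory operations, which are reflected by the structural rules. Limit stages are colimits, handled automatically; and the final saturation step is the identity on underlying \lnl polycategories (\cref{rmk:orth}), so $\sdhat(\Phi)=\cS_{\om+\om}(\Phi)$ is covered, completing the argument.

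I expect the main obstacle to be not any single deep step but the bookkeeping of matching the general invertible logical rule of \cref{fig:inv} --- with its family of premises indexed by abstract projections and its $\abs{\dD}$-admissibility side condition --- precisely against the pre-expansion-to-expansion step in the definition of \dD-completeness, and confirming that the factorization $\chi$ it produces is the one named by its conclusion. Because only a surjection is claimed, the coherence equations ($\ftil\circ g=\widetilde{f\circ g}$, $\chi\circ_K f=\ftil$, associativity, unitality) need not be tracked; they would matter only for the sharper statement that derivations modulo an appropriate equivalence biject with morphisms, which is deferred.
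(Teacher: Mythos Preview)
Your proposal is correct and follows essentially the same approach as the paper: define the map by recursion on derivations (with the invertible rule interpreted via the unique factorization supplied by realizedness), then prove surjectivity by induction on the stages $\cS_{\om+n}$ with base case \cref{thm:surj-noninv}, noting that each successor stage adjoins only the factorizations $\chi$ (captured by the invertible rule) and that the final saturation step is the identity on morphisms. You have simply filled in more detail than the paper's terse three-sentence argument.
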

\begin{proof}
  As before, the function is defined inductively on derivations, with the invertible logical rule resulting from realizedness.
  Also as before, we prove surjectivity onto $\cS_{\om+n}$ by induction.
  The base case $\cS_\om$ is \cref{thm:surj-noninv}; while the morphisms of $\cS_{\om+n+1}$ are generated by the \lnl polycategory operations (structural rules) from those of $\cS_{\om+n}$ and the factorizations in each $\expand{\cC}{\Psi}$ (invertible logical rules).
\end{proof}

The equivalence relation on derivations of $\vdash\Phi$ whose quotient is $\sdhat(\Phi)$ can also be described syntactically.
It is generated by the composition operation of \cS, the structural axioms of an \lnl polycategory, the principal ``$\beta$-reduction'' rule that reduces a cut of the form
\[\small
  \infer{
      \infer*{\dots \\
        f\in \cC(r_{i_1}^{\ep_1},\dots,r_{i_\ell}^{\ep_\ell},r^{\ep}) \text{ abs.~proj.}
      }
      {\vdash R_{i_1}^{\ep_1},\,\dots,\, R_{i_\ell}^{\ep_\ell},\, \boc[R_1,\dots,R_n]^{\ep}}
      \\
      \infer*{\dots \\
        \big\{\vdash R_{i_1}^{\ep_1},\,\dots,\, R_{i_\ell}^{\ep_\ell},\, \Psi \big\}_{f
          \text{ abs.~proj.}}
      }{\vdash \boc[R_1,\dots,R_n]^{-\ep}, \Psi}
  }{\vdash R_{i_1}^{\ep_1},\,\dots,\, R_{i_\ell}^{\ep_\ell},\, \Psi }
\]
to the derivation of $\vdash R_{i_1}^{\ep_1},\dots, R_{i_\ell}^{\ep_\ell},\Psi$ on the right that is indexed by the specific abstract projection $f$ specified on the left, and the ``$\eta$-conversion'' rule that two derivations of $\vdash \boc[R_1,\dots,R_n]^{-\ep}, S_1^{\eta_1},\dots, S_m^{\eta_m}$ are equal if they become equal upon cutting with the noninvertible rule $\vdash R_{i_1}^{\ep_1},\,\dots,\, R_{i_\ell}^{\ep_\ell},\, \boc[R_1,\dots,R_n]^{\ep}$.

\begin{rem}
  We have constructed $\sdhat$ by a categorical iterative procedure, and then shown that we can extract a sequent calculus from this construction.
  As pointed out by a referee, we could also have specified the sequent calculus first and then used it to construct the free \dD-completion $\sdhat$.
  We regard the \emph{equivalence} between the two as the most interesting observation.
  It is ultimately a matter of personal preference which side of the equivalence one prefers to start from, although the categorical approach does have the advantage of quotienting the morphisms by the appropriate equivalence relation automatically.
\end{rem}

We have described this sequent calculus for a restricted class of doctrines, to reduce the syntactic bureaucracy.
However, analogous calculi can be formulated for any doctrine, with the following modifications.

If $\dD$ contains infinite cones, its sequent calculus has infinitely many rules, some with infinitely many premises.
This is hard to implement, of course, but mathematically unproblematic.
If $\dD$ contains non-discrete cones, the type-formation rules have sequents and equalities of sequents as premises.
Thus both judgments and their equalities are mutually inductive, as in a dependent type theory.

If $\abs{\dD}$ is not subterminal, then the syntactic classes of types must be indexed by objects of $\abs{\dD}$, and the sequents must likewise be indexed by morphisms of $\abs{\dD}$.
The result is a ``fibrational'' calculus similar to that of~\cite{lsr:multi}, though without 2-cells in the ``mode theory'' $\abs{\dD}$.
For instance, if $\abs{\dD} = \mathsc{plmulti}$ as in \cref{rmk:planar}, each sequent is labeled by a permutation of its context; this essentially serves to neuter the exchange rule, leading to a variant of ordered logic.
Similarly, if $\abs{\dD} = \mathsc{linpol}$ or $\mathsc{lnlpol}$ as in \cref{eg:pol}, each linear type is labeled as positive or negative.

Finally, if $\dD$ is sorted and \cS lies only over primitive sorts, we can omit the syntactic classes of types corresponding to derived sorts, or equivalently consider the action of sorting cones to be an implicit coercion.
In addition, in this case usually some of the sequents will be redundant, corresponding to hom-sets that are always canonically isomorphic to some other hom-sets, and can be omitted from the syntax.

For example, a Kleisli sorted doctrine with $\abs{\dD}=\mathsc{lnlmulti}$ yields split-context calculi for intuitionistic linear logic like those of~\cite{barber:dill,wadler:syn-ll}, with only one class of types that can appear in both parts of the context.
Types in the nonlinear part have an implicit application of $\uoc$, so it makes sense to change notation and write $\foc A$ as $\oc A$.
Moreover, since $\Pnl(\Theta;\uoc A) \cong \Pl(\Theta| ;\, A)$, the nonlinear morphisms are determined by the linear ones; thus we can dispense with the nonlinear sequents entirely, essentially defining them by the invertible rule for $\uoc$.
The remaining logical rules for the exponentials then become:
\begin{mathpar}
  \infer{\Theta \mid \cdot \vdash A}{\Theta \mid \cdot \vdash \oc A}\and
  \infer{\Theta,A \mid \Gamma\vdash \Delta}{\Theta\mid \Gamma,\oc A \vdash \Delta}\and
  \infer{\Theta\mid \Gamma,A \vdash \Delta}{\Theta, A\mid \Gamma \vdash \Delta} \and
\end{mathpar}
The first two appear verbatim in~\cite{barber:dill,wadler:syn-ll}, while the third is admissible~\cite[Lemma 2.5]{barber:dill}.
The cut rule that mixes linear and nonlinear sequents also has to be restated in this notation, alongside the one for purely linear sequents:
\begin{mathpar}
  \infer{\Theta'\mid\Gamma' \vdash \Delta',A \\ \Theta\mid \Gamma,A\vdash \Delta}{\Theta,\Theta' \mid \Gamma,\Gamma' \vdash \Delta,\Delta'}\and
  \infer{\Upsilon\mid \cdot \vdash A \\ \Theta,A\mid \Gamma\vdash \Delta}{\Theta,\Upsilon \mid \Gamma\vdash\Delta}.
\end{mathpar}
These cut rules both appear in~\cite[Lemma 3.1]{barber:dill} (``Linear Cut'' and ``Intuitionistic Cut'') and in~\cite{wadler:syn-ll} (``Cut'' and the derivable ``Cut-Int'').

Something similar happens in~\cite{ems:eec} with $\abs{\dD}=\mathsc{cbpv}$, although in this case the computation types are merely \emph{included} in the value types by an implicit $\uoc$, rather than identified with them.
This includes the above rules for $\oc A$ (meaning $\foc A$) with $\Gamma=\emptyset$, and the (arity-restricted, cut-including) rules for $\mixedhom$ (their ``$\to$''):
\begin{mathpar}
  \infer{\Theta\vdash X \\ \Theta'\mid \Gamma\vdash X\mixedhom B}{\Theta,\Theta' \mid \Gamma \vdash B}\and
  \infer{\Theta,X \mid \Gamma \vdash B}{\Theta\mid\Gamma\vdash X\mixedhom B}.
\end{mathpar}
Likewise, for \cref{eg:skew} with $\abs{\dD}=\mathsc{symskew}$, the rules for restricted $\ten$ and $\hom$ (with one tight input --- the ``stoup'' --- and the other loose) specialize to those of~\cite{uvz:skewmon,uvz:skewclosed,veltri:symskew,uvw:skew-mill}.

As a final example, in the double-Kleisli sorted doctrine of \cref{eg:double-kleisli}, we can write the sequents as $\Theta \mid \Gamma \vdash \Delta \mid \Upsilon$, where $\Theta$ and $\Upsilon$ consist of types lying over the ``left-hand'' and ``right-hand'' derived sorts respectively.
Types in $\Theta$ have an implicit $\uoc$ and types in $\Upsilon$ have an implicit $\uwn$, so we write $\foc$ and $\fwn$ as $\oc$ and $\wn$ respectively.
Again we can define the nonlinear sequents by the invertible rules for $\uoc$ and $\uwn$ --- although when translating a nonlinear sequent $\Theta,\Upsilon \vdash A$ in this way, we have to pay attention to whether $A$ is being regarded as a left-hand type or a right-hand type: in the former case the sequent becomes $\Theta \mid \cdot \vdash A \mid \Upsilon$, while in the latter case it becomes $\Theta \mid A \vdash \cdot \mid \Upsilon$ (due to the different universal properties of $\uoc$ and $\uwn$).
The remaining logical rules then become:
\begin{mathpar}
  \infer{\Theta \mid \cdot \vdash A \mid \Upsilon}{\Theta \mid \cdot \vdash \oc A\mid  \Upsilon}\and
  \infer{\Theta,A \mid \Gamma\vdash \Delta\mid  \Upsilon}{\Theta\mid \Gamma,\oc A \vdash \Delta\mid  \Upsilon}\and
  \infer{\Theta\mid \Gamma,A \vdash \Delta\mid  \Upsilon}{\Theta, A\mid \Gamma \vdash \Delta\mid  \Upsilon} \\
  \infer{\Theta\mid A \vdash \cdot \mid \Upsilon}{\Theta \mid \wn A \vdash \cdot \mid \Upsilon} \and
  \infer{\Theta \mid \Gamma\vdash \Delta\mid\Upsilon,A}{\Theta\mid \Gamma \vdash \Delta,\wn A\mid\Upsilon}\and
  \infer{\Theta\mid \Gamma \vdash\Delta,A \mid\Upsilon}{\Theta\mid \Gamma \vdash\Delta\mid\Upsilon,A}\and
\end{mathpar}
and the cut rules multifurcate further into:
\begin{mathpar}
  \infer{\Theta'\mid\Gamma' \vdash \Delta',A \mid\Upsilon' \\ \Theta\mid \Gamma,A\vdash \Delta\mid\Upsilon}{\Theta,\Theta' \mid \Gamma,\Gamma' \vdash \Delta,\Delta'\mid\Upsilon,\Upsilon'}\\
  \infer{\Theta'\mid \cdot \vdash A\mid\Upsilon' \\ \Theta,A\mid \Gamma\vdash \Delta\mid\Upsilon}{\Theta,\Theta' \mid \Gamma\vdash\Delta\mid\Upsilon,\Upsilon'}\and
  \infer{\Theta'\mid A \vdash \cdot\mid\Upsilon' \\ \Theta\mid \Gamma\vdash \Delta\mid\Upsilon,A}{\Theta,\Theta' \mid \Gamma\vdash\Delta\mid\Upsilon,\Upsilon'}.
\end{mathpar}
These are all precisely the relevant logical and structural rules of~\cite{girard:unity}.

\section{Adjunctions induced by doctrine maps}
\label{sec:doc-adj}

Our last goal is to show that a doctrine map $\F:\dD_1\to\dD_2$ induces a pseudo 2-adjunction relating $\dD_1$-categories to $\dD_2$-categories, combining the adjunctions
from \cref{thm:sketch-adj,thm:bicat-refl}.

\begin{thm}\label{thm:cplt-adj}
  For any morphism $\F:\dD_1\to\dD_2$ of small doctrines, there is an induced pseudo 2-adjunction
  \[ \ladj{\Fhat} : \cat{\dD_1}_g \toot  \cat{\dD_2}_g : \radj{\Fhat}. \]
\end{thm}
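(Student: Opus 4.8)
The plan is to assemble the desired pseudo 2-adjunction from the strict 2-adjunction $\ladj{\F}\dashv\radj{\F}$ of \cref{thm:sketch-adj} together with the pseudo-reflections of \cref{thm:bicat-refl}. Throughout I identify $\cat{\dD}_g$ with the $(2,1)$-category of \dD-complete sketches, via the equivalence recorded just before \cref{thm:bicat-refl}; note in particular that a morphism of \dD-sketches between two \dD-complete sketches automatically preserves \emph{all} extremal lifts of \dD-cones (on a complete sketch these are precisely the proto-extremal cones, by realizedness and saturation), hence is a \dD-functor, and similarly for 2-cells, so that for \dD-complete $\cS,\cP$ the categories $\dsketch_g(\cS,\cP)$ and $\dcat_g(\cS,\cP)$ coincide.

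The first and principal step is to show that $\radj{\F}$ preserves \dD-completeness, so that it restricts to a strict 2-functor $\radj{\Fhat}\colon\cat{\dD_2}_g\to\cat{\dD_1}_g$. Write $\pi,\pi_\cT$ for the structure functors of $\radj{\F}\cT$ and $\cT$ to $\abs{\dD_1}$ and $\abs{\dD_2}$, and $q\colon\radj{\F}\cT\to\cT$ for the projection, so that $\radj{\F}\cT=\cT\times_{\abs{\dD_2}}\abs{\dD_1}$ and, by the construction of $\radj{\F}$ in \cref{thm:sketch-adj}, a lift $H\colon\cC\to\radj{\F}\cT$ of a $\dD_1$-cone $G$ is proto-extremal exactly when $qH$, transported across the isomorphism $\cC\cong\cC_\F$, is proto-extremal in $\cT$. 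Assuming $\cT$ is \dD-complete, I would check the three clauses for $\radj{\F}\cT$. For \emph{precompleteness}, a lift $G'$ of the reduct $\cC'$ pushes forward along $q$ to a lift of the reduct of the $\dD_2$-cone $\cC_\F\to\abs{\dD_2}$, which extends to a proto-extremal cone $H\colon\cC_\F\to\cT$ by precompleteness of $\cT$; since $\pi_\cT H=\abs{\F}\circ G$ by the commuting square in the definition of a doctrine map, $H$ and $G$ pair up to a lift $\langle H,G\rangle\colon\cC\to\radj{\F}\cT$ of $G$ extending $G'$, and $q\langle H,G\rangle=H$ is proto-extremal. For \emph{realizedness}, I would use the elementary fact that in a pullback square of entries-only \lnl polycategories a concrete cone into the pullback is extremal over the base factor $\abs{\dD_1}$ as soon as its image in $\cT$ is extremal over $\pi_\cT$ --- the diagonal filler being assembled from the filler in $\cT$ and the given map to $\abs{\dD_1}$, with uniqueness forced by the pullback; this is a diagram chase in the spirit of \cref{thm:univ-comp2,thm:extremal-fibcomp}. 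For \emph{saturation}, if $H$ is proto-extremal and $\phi\colon H(K)\cong K'$ has $\pi(\phi)$ an identity, then $q(\phi)$ is an isomorphism with $\pi_\cT q(\phi)=\abs{\F}\,\pi(\phi)$ an identity, so saturation of $\cT$ makes $q(H_\phi)=(qH)_{q\phi}$ proto-extremal, whence $H_\phi$ is.

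The remaining steps are bookkeeping. Take $\ladj{\Fhat}\cS$ to be the $\dD_2$-completion $\widehat{\ladj{\F}\cS}$ of the $\dD_2$-sketch $\ladj{\F}\cS$ (\cref{thm:bicat-refl}); this is a pseudofunctor $\cat{\dD_1}_g\to\cat{\dD_2}_g$, being the composite of the strict 2-functor $\ladj{\F}$ with the pseudo-reflector. Then for a $\dD_1$-category $\cS$ and a $\dD_2$-category $\cT$ one has, pseudonaturally in both variables,
\[ \cat{\dD_2}_g(\ladj{\Fhat}\cS,\cT)\;\simeq\;\sketch{\dD_2}_g(\ladj{\F}\cS,\cT)\;\cong\;\sketch{\dD_1}_g(\cS,\radj{\F}\cT)\;\simeq\;\cat{\dD_1}_g(\cS,\radj{\Fhat}\cT), \]
the first equivalence being \cref{thm:bicat-refl} applied with the \dD-complete sketch $\cT$, the middle isomorphism being the strict 2-adjunction of \cref{thm:sketch-adj}, and the last equivalence being the coincidence of hom-categories of \dD-complete sketches with those of \dD-categories noted above (applied to the \dD-complete sketches $\cS$ and $\radj{\F}\cT=\radj{\Fhat}\cT$). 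A pseudonatural equivalence of hom-categories of this shape is precisely the data of a pseudo 2-adjunction $\ladj{\Fhat}\dashv\radj{\Fhat}$.

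The main obstacle is the first step, and within it the realizedness clause, which forces one to unwind the ``lifting against expansions'' formulation of extremal cones (\cref{defn:cones}) inside a pullback of \lnl polycategories --- conceptually transparent but notationally heavy, and not literally an instance of \cref{thm:extremal-fibcomp}. Everything else is 2-categorical coherence, and the restriction to the invertible-2-cell variants $(-)_g$ is exactly what makes the argument go through, since \dD may contain contravariant operations such as $\hom$ or $\duals$ (as already flagged before \cref{thm:bicat-refl}). Finally, if $\F$ is moreover a sorted doctrine map, then \cref{thm:cplt-sort,thm:docmap-sort} show this adjunction restricts to the categories $\dscat$.
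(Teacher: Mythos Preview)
Your proof is correct and follows the same overall architecture as the paper's: identify $\dD_i$-categories with $\dD_i$-complete sketches, show that $\radj{\F}$ preserves completeness so that it restricts to $\radj{\Fhat}$, define $\ladj{\Fhat}$ by post-composing $\ladj{\F}$ with the $\dD_2$-completion of \cref{thm:bicat-refl}, and then run the same chain of hom-equivalences.

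The one point of divergence is how you establish that $\radj{\F}$ preserves completeness. You verify precompleteness, realizedness, and saturation by hand, unwinding the pullback description of $\radj{\F}\cT$ and the expansion-lifting definition of extremality; as you note, the realizedness clause is the heaviest part. The paper instead invokes the small-injectivity characterization of \cref{thm:rep-inj}: since $\ladj{\F}$ maps the generating set $\cI_{\dD_1}$ into $\cI_{\dD_2}$ up to isomorphism (this is essentially what a doctrine map does to cones, their reducts, expansions, and isomorphism-closures), the right adjoint $\radj{\F}$ automatically preserves injectivity with respect to these sets, hence preserves completeness. This is a one-line argument that sidesteps the diagram chase entirely. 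Your direct verification has the virtue of not depending on \cref{thm:rep-inj}, but the paper's route is considerably shorter and explains \emph{why} the result holds at the level of the adjunction rather than the level of individual cones.
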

\begin{proof}
  Identifying $\dD_i$-categories with $\dD_i$-complete sketches, we define $\radj{\Fhat}$ to be the $\radj{\F}$ from \cref{thm:sketch-adj} restricted to $\dD_2$-complete inputs.
  This takes values in $\dD_1$-complete sketches because the $\ladj{\F}$ from \cref{thm:sketch-adj} maps $\cI_{\dD_1}$ into $\cI_{\dD_2}$, up to isomorphism.
  Now we can define $\ladj{\Fhat}(\cS) = \widehat{(\ladj{\F}\cS)}_{\dD_2}$, and compute
  \begin{multline*}
    \cat{\dD_2}_g(\ladj{\Fhat}(\cS),\cT) =
    \cat{\dD_2}_g(\widehat{(\ladj{\F}\cS)}_{\dD_2},\cT) \simeq
    \sketch{\dD_2}_g(\ladj{\F}\cS, \cT)\\ \cong
    \sketch{\dD_1}_g(\cS,\radj{\F}\cT) \cong
    \cat{\dD_1}_g(\cS,\radj{\Fhat}\cT). \tag*{\qedhere} 
  \end{multline*}
\end{proof}

\begin{thm}\label{thm:sort-adj}
  For any sorted map $\F:\dD_1\to\dD_2$ of small sorted doctrines, there is an induced pseudo 2-adjunction
  \[ \ladj{\Ftil} : \scat{\dD_1}_g \toot  \scat{\dD_2}_g : \radj{\Ftil}. \]
\end{thm}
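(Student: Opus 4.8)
The plan is to obtain $\ladj{\Ftil}$ and $\radj{\Ftil}$ by restricting the pseudo 2-adjunction $\ladj{\Fhat}:\cat{\dD_1}_g\toot\cat{\dD_2}_g:\radj{\Fhat}$ of \cref{thm:cplt-adj} to the well-sorted objects on either side. The only thing that needs checking is that each of these two 2-functors carries well-sorted categories to well-sorted categories; once that is done, the adjunction restricts purely formally, since (by \cref{thm:sort-corefl}) well-sortedness is a property of a sketch, so $\scat{\dD_i}_g$ is a \emph{full} sub-2-category of $\cat{\dD_i}_g$.

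First I would treat the right adjoint. Recall from the proof of \cref{thm:cplt-adj} that $\radj{\Fhat}$ is just the restriction to $\dD_2$-complete inputs of the right adjoint $\radj{\F}:\sketch{\dD_2}\to\sketch{\dD_1}$ of \cref{thm:sketch-adj}. Since $\F$ is a \emph{sorted} doctrine map, \cref{thm:docmap-sort} tells us that $\radj{\F}$ preserves well-sortedness; hence $\radj{\Fhat}$ restricts to a 2-functor $\radj{\Ftil}$ from well-sorted $\dD_2$-categories to well-sorted $\dD_1$-categories. For the left adjoint, recall that $\ladj{\Fhat}(\cS)=\widehat{(\ladj{\F}\cS)}_{\dD_2}$, where $\ladj{\F}$ is the left adjoint of \cref{thm:sketch-adj}. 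Given a well-sorted $\dD_1$-category $\cS$, the $\dD_2$-sketch $\ladj{\F}\cS$ is again well-sorted by \cref{thm:docmap-sort}, and its $\dD_2$-completion $\widehat{(\ladj{\F}\cS)}_{\dD_2}$ remains well-sorted by \cref{thm:cplt-sort}; so $\ladj{\Fhat}$ restricts to an assignment $\ladj{\Ftil}$ landing in well-sorted $\dD_2$-categories.

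Finally I would assemble these into the adjunction. Since $\scat{\dD_i}_g$ is full in $\cat{\dD_i}_g$, the hom-categories computed in the two 2-categories agree on well-sorted objects; and $\ladj{\Ftil}\cS$ and $\radj{\Ftil}\cT$ are themselves well-sorted. Therefore the pseudonatural equivalences $\cat{\dD_2}_g(\ladj{\Fhat}\cS,\cT)\simeq\cat{\dD_1}_g(\cS,\radj{\Fhat}\cT)$ furnished by \cref{thm:cplt-adj} restrict to pseudonatural equivalences $\scat{\dD_2}_g(\ladj{\Ftil}\cS,\cT)\simeq\scat{\dD_1}_g(\cS,\radj{\Ftil}\cT)$, which — a pseudo 2-adjunction of 2-categories being equivalently such a pseudonatural family of equivalences of hom-categories — is precisely the asserted pseudo 2-adjunction. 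I do not expect a real obstacle here: the substantive content (that sorted doctrine maps and $\dD$-completion both preserve well-sortedness) has already been established in \cref{thm:docmap-sort,thm:cplt-sort}, and what remains is the routine bookkeeping of restricting an adjunction along a pair of full sub-2-category inclusions that are stable under both of its functors.
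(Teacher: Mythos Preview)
Your proposal is correct and follows essentially the same route as the paper: show that both $\radj{\Fhat}$ and $\ladj{\Fhat}$ preserve well-sortedness, invoking \cref{thm:docmap-sort} for both underlying sketch functors and \cref{thm:cplt-sort} for the completion step, then restrict the pseudo 2-adjunction of \cref{thm:cplt-adj} to the full sub-2-categories of well-sorted objects. The paper states this more tersely, leaving implicit the fullness argument you spell out, but the content is the same.
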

\begin{proof}
  It suffices to show that both functors in \cref{thm:cplt-adj} preserve well-sortedness.
  For $\radj{\Fhat} = \radj{\F}$ this follows from \cref{thm:docmap-sort}.
  For $\ladj{\Fhat}$, let \cS be a well-sorted $\dD_1$-complete sketch.
  By \cref{thm:docmap-sort}, $\ladj{\F}(\cS)$ is a well-sorted (incomplete) $\dD_2$-sketch; thus by \cref{thm:cplt-sort}, $\ladj{\Fhat}(\cS) = \widehat{(\ladj{\F}{\cS})}_{\dD_2}$ is also well-sorted.
\end{proof}

\begin{rem}\label{rmk:dir-adj}
  If $\dD_2$ (hence also $\dD_1$) contains only ``totally covariant'' operations, then \cref{thm:cplt-adj,thm:sort-adj} extend to pseudo 2-adjunctions $\cat{\dD_1}\toot\cat{\dD_2}$ and $\scat{\dD_1}\toot\scat{\dD_2}$ including the noninvertible 2-cells.
\end{rem}

We conclude with examples.
In fact, nearly all the obvious forgetful functors between classes of \lnl polycategories discussed in \cref{sec:relation-literature} are of the form $\radj{\Fhat}$ for some (sorted) doctrine map $\F$, and therefore have left pseudo-adjoints.

To start with, we consider maps between doctrines that have no cones.
These induce $\radj{\Fhat}$ functors including the following.
\begin{itemize}
\item The underlying \lnl multicategory of an \lnl polycategory.
\item The underlying cartesian multicategory, and the underlying symmetric polycategory, of an \lnl multicategory or \lnl polycategory.
\item The underlying symmetric multicategory of a symmetric polycategory, \lnl multicategory, or \lnl polycategory.
\end{itemize}
Thus, all of these forgetful functors have left pseudo-adjoints, which extend to non-invertible 2-cells as in \cref{rmk:dir-adj}.

By adding appropriate cones to the doctrines, we obtain more $\radj{\Fhat}$ functors, such as the following.
In each case we must check that the putative doctrine map actually preserves the specified cones.
This basically means that every specified kind of universal property in the domain doctrine is also specified in the codomain, which is essentially just the assertion that the forgetful functor in question exists.
\begin{itemize}
\item The underlying symmetric monoidal category of a linearly distributive category.
\item The underlying closed symmetric monoidal category of a $\ast$-autonomous category.
  To represent this using a doctrine morphism, we need to explicitly include a $\hom$-cone in the doctrine for $\ast$-autonomous categories (to be the image of the $\hom$-cone in the doctrine for closed symmetric monoidal categories).
  Since internal-homs can be derived from duals, and hence are automatically preserved by $\ast$-autonomous functors, this yields an equivalent 2-category of \dD-categories.
\item The underlying linearly distributive category of a $\ast$-autonomous category.
  As in the previous example, for this we need to include redundant $\coten$- and $\counit$-cones in the doctrine for $\ast$-autonomous categories.
\item The underlying symmetric monoidal category, and the underlying cartesian monoidal category, of an \lnl adjunction.
\item The underlying $\ast$-autonomous category, and the underlying cartesian monoidal category, of a $\ast$-autonomous \lnl adjunction.
\item The underlying CBPV pre-structure of an \lnl adjunction, the underlying EEC+ model of a closed \lnl adjunction with products and coproducts, and so on.
\end{itemize}
Thus, all of these forgetful functors have left pseudo-adjoints as well.
Those with no contravariant operations (such as $\hom$ and $\duals$) extend to non-invertible 2-cells as in \cref{rmk:dir-adj}.
We can also add any desired limits and colimits to these doctrines.

Finally, we consider sorted maps of doctrines containing some derived sorts.
In the simplest case, the domain doctrine has all sorts primitive, in which case a doctrine map is sorted just when it maps every sort to a primitive one.
This yields $\radj{\Fhat}$ functors such as the following.
\begin{itemize}
\item The underlying (closed) symmetric monoidal category of a (closed) symmetric monoidal category with a linear exponential comonad.
\item The underlying linearly distributive category of a linearly distributive category with storage.
\item The underlying (symmetric) multicategory of a (symmetric) skew multicategory.
\end{itemize}
If the domain has primitive sorts, we have to check the rest of \cref{defn:docmap-sort}.
This yields $\radj{\Fhat}$ functors such as the following, all with left pseudo-adjoints.
\begin{itemize}
\item The underlying symmetric monoidal category with linear exponential comonad of a linearly distributive category with storage.
  Here the unique derived (nonlinear) sort in the domain maps to the derived nonlinear sort of left-hand objects in the codomain (see \cref{eg:double-kleisli}).
\item The underlying linearly distributive category with storage of a $\ast$-autonomous category with storage.
\item The underlying (symmetric) skew monoidal category of a lax (symmetric) monoidal comonad, as in~\cite[Definition 7.4]{szlachanyi:skew} or~\cite[Example 2]{veltri:symskew}.
  Here the underlying functor of the doctrine map $\mathsc{symskew} \to \mathsc{smadj}$
  is defined by $\mathsc{l}\mapsto \mathsc{p}$ and $\mathsc{t}\mapsto \mathsc{n}$, where $\mathsc{smadj}$ has $\mathsc{p}$ derived and $\mathsc{n}$ primitive.
\end{itemize}

\section*{Acknowledgments}
\noindent I would like to thank Robin Cockett, Max New, Paul Blain Levy, Noam Zeilberger, Christine Tasson, and Martin Hyland for helpful conversations and comments, and Nicolas Blanco for a careful reading and very helpful suggestions.
I would also like to thank the referees for very helpful suggestions.

\bibliographystyle{alphaurl}
\bibliography{lnlpoly}

\begin{table}
  \centering
  \begin{sideways}
  \begin{tabular}{c|c|p{6in}}
    \textbf{Name}&\textbf{Reference}&\textbf{Definition}\\\hline
    $\mathsc{lnlpoly}$ & \cref{rmk:slice} &  one linear object, one nonlinear object, all homsets singletons.\\\hline
    $\mathsc{lnlmulti}$ & \cref{rmk:slice} &  one linear object, one nonlinear object, all nonlinear homsets and co-unary linear homsets singletons.\\\hline
    $\mathsc{sympoly}$ & \cref{rmk:slice} &  one linear object, no nonlinear objects, and all linear homsets singletons. \\\hline
    $\mathsc{symmulti}$ & \cref{rmk:slice} &  one linear object, no nonlinear objects, co-unary linear homsets singletons, and others empty.\\\hline
    $\mathsc{cartmulti}$  & \cref{rmk:slice} &  one nonlinear object, no linear objects, all nonlinear homsets singletons, and all linear homsets empty.\\\hline
    $\mathsc{cat}$  & \cref{rmk:slice} &  one linear object, no nonlinear objects, and only the identity morphism. \\\hline
    $\mathsc{plmulti}$ & \cref{rmk:planar} &  one linear object, and morphisms with arity $n$ and co-arity 1 labeled by permutations of $n$ objects.\\\hline
    $\mathsc{dblsplit}$ & \cref{rmk:double-split} &  one linear object, two nonlinear objects, and all homsets singletons.\\\hline
    $\mathsc{cbpv}$ & \makecell[t]{after\\\cref{thm:cbpv-pow}} &  one nonlinear object, one linear object, all nonlinear homsets and subunary co-unary linear homsets singletons, and others empty.\\\hline
    $\mathsc{ecbv}$ & \makecell[t]{after\\\cref{thm:cbpv-pow}} &  one nonlinear object, one linear object, all nonlinear homsets and unary co-unary linear homsets singletons, and others empty.\\\hline
    $\mathsc{smadj}$ & \cref{eg:smadj} &  two linear objects $\mathsc{p},\mathsc{n}$, a unique morphism $\Gamma \to \mathsc{p}$ when $\Gamma$ consists entirely of $\mathsc{p}$'s, and a unique morphism $\Gamma \to \mathsc{n}$ for any $\Gamma$.\\\hline
    $\mathsc{adj}$ & \cref{eg:adj} & two linear objects $\mathsc{p},\mathsc{n}$, a unique nonidentity morphism $\mathsc{p}\to\mathsc{n}$.\\\hline
    $\mathsc{linpol}$ & \cref{eg:pol} &  two linear objects $\mathsc{p},\mathsc{n}$, a unique morphism $\Gamma \to \mathsc{p}$ when $\Gamma$ consists entirely of $\mathsc{p}$'s, and a unique morphism $\Gamma \to \mathsc{n}$ when $\Gamma$ contains no more than one $\mathsc{n}$.\\\hline
    $\mathsc{symskew}$ & \cref{eg:skew} &  same as $\mathsc{linpol}$.\\\hline
    $\mathsc{lnlpol}$ & \cref{eg:pol} &  two linear objects $\mathsc{p},\mathsc{n}$, one nonlinear object $\mathsc{x}$, all nonlinear homsets singletons, a unique morphism $(\Theta \mid \Gamma) \to \mathsc{p}$ if $\Gamma$ consists entirely of $\mathsc{p}$'s, and a unique morphism $(\Theta\mid\Gamma) \to \mathsc{n}$ when $\Gamma$ contains no more than one $\mathsc{n}$.
  \end{tabular}
\end{sideways}
\caption{Subterminal and other small \lnl polycategories}
  \label{tab:subterms}
\end{table}

\end{document}